\documentclass{amsart}
\usepackage{dsfont}
\usepackage{helvet}
\usepackage{color}
\usepackage[bbgreekl]{mathbbol}
\usepackage{mathrsfs}  
\usepackage{bbm}

\usepackage{amsmath,amsxtra,amsthm,amssymb,xr}
\usepackage[all]{xy}
\newtheorem{theorem}{Theorem}[section]
\newtheorem{lemma}[theorem]{Lemma}
\newtheorem{conj}[theorem]{Conjecture}
\newtheorem{proposition}[theorem]{Proposition}
\newtheorem{corollary}[theorem]{Corollary}
\newtheorem{defn}[theorem]{Definition}

\newtheorem{remark}[theorem]{Remark}

\setlength{\parskip}{.5\baselineskip}

\newcommand{\ooo}{\mathfrak{O}}

\newcommand{\Gal}{\operatorname{Gal}}

\newcommand{\sign}{\operatorname{sign}}
\renewcommand{\d}{\operatorname{d}}
\newcommand{\DD}{\mathbb{D}}

\newcommand{\NN}{\mathbb{N}}
\newcommand{\QQ}{\mathbb{Q}}
\newcommand{\Qp}{\mathbb{Q}_p}
\newcommand{\Zp}{\mathbb{Z}_p}
\newcommand{\ZZ}{\mathbb{Z}}
\newcommand{\fa}{\mathfrak{a}}
\newcommand{\fl}{\mathfrak{l}}
\newcommand{\fm}{\mathfrak{m}}
\newcommand{\fn}{\mathfrak{n}}

\newcommand{\FFF}{\mathcal{F}}
\newcommand{\g}{\mathbf{g}}

\newcommand{\LSU}{\mathcal{L}^{\rm SU}}

\newcommand{\ord}{\mathrm{ord}}

\newcommand{\fp}{\mathfrak{p}}
\newcommand{\fq}{\mathfrak{q}}

\newcommand{\calL}{\mathcal{L}}

\newcommand{\calO}{\mathcal{O}}
\newcommand{\cE}{\mathcal{E}}
\newcommand{\Iw}{\mathrm{Iw}}

\newcommand{\GL}{\mathrm{GL}}

\newcommand{\col}{\mathrm{Col}}

\newcommand{\cyc}{\textup{cyc}}

\newcommand{\al}{\mathfrak{L}}

\newcommand{\ff}{\mathfrak{f}}

\newcommand{\Hom}{\mathrm{Hom}}
\newcommand{\Char}{\mathrm{char}}

\newcommand{\ac}{\textup{ac}}
\newcommand{\LL}{\Lambda}
\newcommand{\TT}{\mathbb{T}}
\newcommand{\oo}{\mathfrak{O}}
\newcommand{\RR}{\mathcal{R}}
\newcommand{\f}{\textup{f}}
\newcommand{\plusf}{+/\textup{f}}
\newcommand{\Gr}{\textup{Gr}}
\newcommand{\FFgr}{\FFF_\textup{Gr}}
\newcommand{\lra}{\longrightarrow}
\newcommand{\ra}{\rightarrow}
\newcommand{\res}{\textup{res}}

\newcommand{\Bj}{\mathbf{j}}
\newcommand{\Ba}{\mathbf{a}}
\newcommand{\Bf}{\mathbf{f}}
\newcommand{\Bg}{\mathbf{g}}
\newcommand{\ur}{\textup{ur}}

\newcommand{\BF}{\textup{BF}}
\newcommand{\cP}{\mathcal{P}}

\begin{document}

\title[Anticyclotomic $p$-ordinary Iwasawa Theory]{Anticyclotomic  $p$-ordinary Iwasawa Theory of Elliptic Modular Forms}

\begin{abstract}
This is the first in a series of articles where we will study the Iwasawa theory of an elliptic modular form $f$ along the anticyclotomic $\ZZ_p$-tower of an imaginary quadratic field $K$ where the prime $p$ splits completely. Our goal in this portion is to prove the Iwasawa main conjecture for suitable twists of $f$ assuming that $f$ is $p$-ordinary, both in the \emph{definite} and \emph{indefinite} setups simultaneously, via an analysis of Beilinson-Flach elements. 
\end{abstract}

\author{K\^az\i m B\"uy\"ukboduk}
\address{K\^az\i m B\"uy\"ukboduk\newline
Ko\c{c} University, Mathematics  \\
Rumeli Feneri Yolu, 34450 Sariyer \\ 
Istanbul, Turkey}
\email{kbuyukboduk@ku.edu.tr}

\author{Antonio Lei}
\address{Antonio Lei\newline
D\'epartement de Math\'ematiques et de Statistique\\
Universit\'e Laval, Pavillion Alexandre-Vachon\\
1045 Avenue de la M\'edecine\\
Qu\'ebec, QC\\
Canada G1V 0A6}
\email{antonio.lei@mat.ulaval.ca}

\thanks{The first named author is partially supported by the Turkish Academy of Sciences and T\"UB\.ITAK Grant 113F059. The second named author is supported by the NSERC Discovery Grants Program 05710.}
\subjclass[2010]{11R23 (primary); 11F11, 11R20 (secondary) }
\keywords{Anticyclotomic Iwasawa theory, elliptic modular forms, ordinary primes}
\maketitle
\tableofcontents
\section{Introduction}

Fix forever a rational prime $p \geq 5$ and an imaginary quadratic number field $K$ in which $(p)=\fp\fp^c$ splits. Let $\eta$ denote the quadratic Dirichlet character of $\QQ$ associated to $K/\QQ$. We also fix once and for all an embedding $\iota_p: \overline{\QQ}\hookrightarrow \mathbb{C}_p$ and suppose that the prime $\fp$ of $K$ is the prime induced from this embedding. Let $K_\infty$ denote the maximal $\ZZ_p$-power extension of $K$, so that we have $\Gamma:=\textup{Gal}(K_\infty/K)\cong \ZZ_p^2$. We let $D_\infty/K$ denote the anticyclotomic $\ZZ_p$-extension of $K$ (so that $\Delta_{\ac}:=\Gal(D_\infty/\QQ)$ is an infinite dihedral group) contained in $K_\infty$ and $K_\cyc/K$ the cyclotomic $\ZZ_p$-extension of $K$ (so that $\Delta_\cyc:=\Gal(K_\cyc/\QQ)$ is abelian). We write $\Gamma^{\ac}:=\Gal(D_\infty/K)$ and $\Gamma^\cyc:=\Gal(K_\cyc/K)$. Set $\LL=\ZZ_p[[\Gamma]]$, $\LL_{\ac}=\ZZ_p[[\Gamma^\ac]]$ and $\LL_{\cyc}=\ZZ_p[[\Gamma^\cyc]]$.

Let $f=\sum a_nq^n \in S_k(\Gamma_0(N))$ be a normalized elliptic newform of (even) weight $k$ and level $\Gamma_0(N)$. When the weight of $f$ equals $2$, assume in addition that $p>5$ and $p\nmid N$. Let $L(f/F,s)$ denote the Hecke $L$-function associated to the base change of $f$ to a number field $F$. Suppose that all $\iota_p(a_n) \in L$, a finite extension of $\QQ_p$ and let $W_f$ denote Deligne's two-dimensional representation attached to $f$, with coefficients in $L$. We shall write $V_f:=W_f(k/2)$ for the self-dual twist of $V_f$ and occasionally refer to it as the central critical twist of Deligne's representation. Denote the valuation ring of $L$ by $\mathfrak{o}=\mathfrak{o}_L$ and fix a Galois-stable $\mathfrak{o}$-lattice $U_f$ in $W_f$. Set $T_f=U_f(k/2)$.


Suppose that $E_{/\QQ}$ is an elliptic curve with complex multiplication by $K$. When the sign $\epsilon(E/K)$ of the functional equation of $L(E/K,s)$ is $+1$ (which we henceforth refer as the definite case) and $p$ is a good ordinary prime, Iwasawa's main conjecture for $E$ along $D_\infty$ follows from the two-variable main conjecture proved by Rubin in \cite{rubinmainconj}.  When the sign $\epsilon(E/K)$ equals $-1$ (which we henceforth refer as the indefinite case), Agboola and Howard in \cite{agboolahowardordinary} proved the anticyclotomic main conjecture and this was generalized by Arnold in~\cite{arnoldhigherweightanticyclo} to higher weight CM forms. When $p$ is a supersingular prime, Agboola and Howard in~\cite{agboolahowardsupersingular} proved also a weak form of the main conjecture for the elliptic curve $E$. This was extended to cover any CM form of arbitrary weight by the first-named author in~\cite{kbbanticyclossCM}. The key ingredients in all this work are the elliptic unit Euler system and the reciprocity laws of Coates-Wiles~\cite{coateswiles77, wiles78} and Kato~\cite{katoreciprocity99}, as well as Rubin's two-variable main conjecture. 

As part of this work, we shall carry a similar task out when the modular form in question is $p$-ordinary and non-CM. In this situation, in place of the elliptic units, we shall make use of the Beilinson-Flach elements\footnote{In Section~\ref{subsubsec:ESoverK} below, we explain how to obtain an Euler system of Beilinson-Flach classes over $K$ out of those classes constructed in \cite{KLZ2} for Ranking-Selberg convolutions. The relation of this portion to \cite{KLZ2} should be viewed as the counterpart of \cite{LLZ2} to \cite{LLZ1}. In particular, the contents of Section~\ref{subsubsec:ESoverK} brings the higher weight set up to equal footing with the weight-$2$ situation.}$^,$ \footnote{During the preparation of this article, F. Castella released a proof along these lines for eigenforms of weight $2$, concentrating on the indefinite case (and still when the prime $p$ splits and $f$ is $p$-ordinary).} of~\cite{KLZ2} as well as the explicit reciprocity laws of~\cite{KLZ2}. Note that results in this vain has been previously obtained also by Bertolini and Darmon~\cite{BD05} in the definite setting (and when the weight of the form in question equals $2$) making use of Heegner points (which are not directly available in the definite setting, but they are instead constructed using level-raising arguments in loc. cit.); these results have been extended by Chida and Hsieh in \cite{chidahsiehanticyclomainconjformodformscomposito} to higher weight forms (also recently by Castella-Kim-Longo in \cite{castellakimlongo} using a different method which is  based on an anticyclotomic  version of the variational results of \cite{EPW}, which allows the authors to reduce to weight $2$). In the indefinite case, Howard~\cite{howardcompositio1,how2LamdaadicGZ} has proven a form of our Theorem~\ref{thmmainpordinaryintro}(ii) below also for forms of weight $2$ making developing the Heegner point Kolyvagin system machine; however, contrary to \cite{BD05}, it is necessary to assume in \cite{how2LamdaadicGZ} that the prime $p$ splits in $K/\QQ$. While \cite{BD05,howardcompositio1} allows the treatment of the case when the prime $p$ is inert in $K/\QQ$ (this case escapes the methods of the current article), our approach here has the advantage of offering a simultaneous treatment of the definite and indefinite set up, as well as an access to a study of modular forms of higher weights. Furthermore, our approach naturally extends (as studied in our forthcoming note \cite{kbbleipnonord}) to treat the $p$-non-ordinary modular forms, again both in the definite and indefinite situations simultaneously under the hypothesis that  $p$ splits in $K/\QQ$. We remark that Darmon and Iovita in \cite{darmoniovita} has extended the methods of \cite{BD05} to prove (also in the definite set up) a divisibility in a plus/minus anticyclotomic main conjecture when the weight of the form in question equals $2$ (and contrary to \cite{BD05} assuming that the prime $p$ is split in $K/\QQ$, as we do here).

Before we state our main results, we introduce a technical condition which is present at some portions of our work. Let $\mathbf{f}$ be the Hida family\footnote{In this article, a Hida family means the localization of the Hecke algebra (of an appropriate level) at a maximal ideal, following the convention of \cite[\S7.2]{KLZ2}.   } carrying the $p$-ordinary stabilization of $f$ as its weight-$k$ specialization. Let $\Psi$ denote the $U_p$-eigenvalue  of the family $\mathbf{f}$ and write $\lambda$ for its specialization at $k$ (so that the $\lambda$ is the unit root of the Hecke polynomial of $f$ at $p$).  Consider the following condition on $\mathbf{f}$:
\\\\
\textbf{(wt-2)} The weight-$2$ specialization of $\Psi$ is not equal to $\pm1$ and that $k \equiv 2$ $\mod p-1$.


We remark that the portion of the hypothesis \textbf{(wt-2)} concerning the $U_p$- eigenvalue ensures that the weight-$2$ (with trivial wild character) specialization of $\mathbf{f}$ is $p$-old. The second portion of \textbf{(wt-2)} guarantees that the weight-$2$ specialization (with trivial wild character) has trivial nebentype. We will work under \textbf{(wt-2)}  only when we rely on the work of Wan in the \emph{indefinite case} (which we rely on in order to verify that the bounds we obtain here through an Euler system argument are indeed sharp).

We are now ready to state our main results. 

\begin{theorem}
\label{thmmainpordinaryintro}
Assume that the hypotheses \textup{\textbf{(H.Im.)}}, \textup{\textbf{(H.Dist.)}}, \textup{\textbf{(H.SS.)}} and \textup{\textbf{(H.nEZ.)}} (which we introduce below) hold true. Factor $N=p^{\alpha_p}N^+N^-$ where $N^+$ (resp., $N^-$) is only divisible by primes that are split (resp., inert) in $K$. Suppose that $N^-$ is square-free and $\alpha$ is any ring class character modulo $\ff p^\infty$ (where the choice of the modulus $\ff$ is made precise below) for which we have $\alpha(\fp)\neq \alpha(\fp^c)$.
\\\\
\textup{\textbf{(i)}} Assume that $N^-$ is a product of odd number of primes, $p^2$ does not divide $N$ and $({\rm{disc}(K/\QQ)},{N})=1$. Then the characteristic ideal of a suitably defined Greenberg Selmer group $\mathfrak{X}(f\otimes\alpha/D_\infty)$  contains the anticyclotomic projection $\mathfrak{L}_{f,0}^{(\alpha)}$ of the Hida-Perrin-Riou $p$-adic $L$-function, after inverting $p$. If in addition we assume that $p \nmid N$, then $\Char\left(\mathfrak{X}(f\otimes\alpha/D_\infty)\right)\otimes L$ is generated by $\mathfrak{L}_{f,0}^{(\alpha)}$.
\\\\
\textup{\textbf{(ii)}} Assume that $N^-$ is a square-free product of even number of primes and $p$ is prime to $N\rm{disc}(K/\QQ)$. Then $\al_{f,0}^{(\alpha)}=0$ and the Greenberg Selmer group $\mathfrak{X}(f\otimes\alpha/D_\infty)$ has rank one. Let $\al_{f,1}^{(\alpha)}$ denote the derivative of the Hida-Perrin-Riou $p$-adic $L$-function in the cyclotomic direction, restricted to the anticyclotomic line. Then,
$$\al_{f,1}^{(\alpha)}\in \mathfrak{Reg}_{\ac}\cdot\Char\left(\mathfrak{X}(f\otimes\alpha/D_\infty)_{\textup{tor}}\right)\otimes L$$ 
where $\mathfrak{Reg}_{\ac}$ is a regulator defined in terms of a $\LL_\ac$-adic height pairing. 

If we assume in addition that $N$ is square free, that the hypothesis \textup{\textbf{(wt-2)}} holds true and that there exists a prime $q \mid N^-$ such that $\overline{\rho}_f$ is ramified at $q$, then the ideal $\mathfrak{Reg}_{\ac}\cdot\Char\left(\mathfrak{X}(f\otimes\alpha/D_\infty)_{\textup{tor}}\right)$ is in fact generated by $\al_{f,1}^{(\alpha)}\,.$
\end{theorem}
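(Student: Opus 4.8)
The first assertion of part \textup{(ii)} already supplies one half of the equality: there the Euler system of Beilinson-Flach classes bounds $\Char_{\LL_{\ac}}\!\big(\mathfrak{X}(f\otimes\alpha/D_\infty)_{\textup{tor}}\big)$ by the square of the index of the Beilinson-Flach class $z_{\BF}^{(\alpha)}$ in the $\LL_{\ac}$-free part of $\mathfrak{X}(f\otimes\alpha/D_\infty)^\vee$, while the $\LL_{\ac}$-adic reciprocity/height formula underlying part \textup{(ii)} identifies $\al_{f,1}^{(\alpha)}$, up to a unit, with $\mathfrak{Reg}_{\ac}$ times that same square; hence $\mathfrak{Reg}_{\ac}\cdot\Char_{\LL_{\ac}}\!\big(\mathfrak{X}(f\otimes\alpha/D_\infty)_{\textup{tor}}\big)$ divides $\al_{f,1}^{(\alpha)}$ in $\LL_{\ac}\otimes L$. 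What is left is the complementary divisibility --- equivalently, the assertion that $z_{\BF}^{(\alpha)}$ generates the $\LL_{\ac}$-free part of $\mathfrak{X}(f\otimes\alpha/D_\infty)^\vee$ on the nose, which is the full Perrin-Riou-style rank-one anticyclotomic main conjecture for $f\otimes\alpha$. The plan is to obtain it by combining the work of Wan with a descent in the cyclotomic variable.

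The input from Wan is his divisibility in the Iwasawa-Greenberg main conjecture for the Rankin-Selberg $p$-adic $L$-function attached to the Hida family $\mathbf{f}$ and the CM Hida family $\theta_\alpha$ of the ring class character $\alpha$. Restricted to the two-variable plane $\Gamma=\Gal(K_\infty/K)\cong\ZZ_p^2$, this $p$-adic $L$-function recovers the Hida-Perrin-Riou object $\al^{(\alpha)}$ over $K_\infty$ (with $\al_{f,0}^{(\alpha)}=\al^{(\alpha)}\bmod T_{\cyc}$, where $T_{\cyc}$ is a uniformizer of the cyclotomic direction), and Wan's theorem gives
$$\Char_{\LL}\mathfrak{X}(f\otimes\alpha/K_\infty)\ \subseteq\ \big(\al^{(\alpha)}\big)\qquad\text{in }\LL\otimes L,$$
the lower bound on the dual Selmer group coming from an Eisenstein-congruence construction on a unitary group. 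Hypothesis \textbf{(wt-2)} is what makes Wan's theorem applicable here: its first part places the weight-$2$, trivial-wild-character member of $\mathbf{f}$ in the $p$-old range, its second part forces that member to have trivial nebentype, and together these put $\mathbf{f}$ in the setting of Wan's construction; one then specializes the resulting family-wide divisibility at weight $k$ to recover $f$. The remaining assumptions are the usual ramification/minimality inputs of that argument: together with the standing hypotheses, $N$ square-free keeps the level minimal and the sign indefinite, and the existence of a prime $q\mid N^-$ at which $\overline{\rho}_f$ is ramified plays the role of the Skinner-Urban ``ramified at some prime'' hypothesis, guaranteeing that the error terms in the Eisenstein ideal computation vanish.

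Combining Wan's divisibility with the opposite inclusion $\big(\al^{(\alpha)}\big)\subseteq\Char_{\LL}\mathfrak{X}(f\otimes\alpha/K_\infty)\otimes L$ --- obtained by running the two-variable Beilinson-Flach Euler system over $K_\infty$, the un-projected form of the argument of part \textup{(ii)} --- yields the two-variable equality $\Char_{\LL}\mathfrak{X}(f\otimes\alpha/K_\infty)\otimes L=\big(\al^{(\alpha)}\big)$, both ideals being non-zero. One then descends this to the anticyclotomic line. Since $N^-$ has an even number of prime factors we are in the indefinite case, so $\al_{f,0}^{(\alpha)}=0$; thus $\al^{(\alpha)}$ vanishes on the anticyclotomic line and its first-order term along the cyclotomic direction is, by definition, $\al_{f,1}^{(\alpha)}$. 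On the algebraic side, a control-theory (snake-lemma) comparison of $\mathfrak{X}(f\otimes\alpha/K_\infty)$ with $\mathfrak{X}(f\otimes\alpha/D_\infty)$ --- whose first-order term along the cyclotomic direction is governed by the $\LL_{\ac}$-adic height pairing, of Perrin-Riou type, and which uses that $\mathfrak{X}(f\otimes\alpha/D_\infty)$ has $\LL_{\ac}$-rank one --- identifies the first-order term of $\Char_{\LL}\mathfrak{X}(f\otimes\alpha/K_\infty)$ along the cyclotomic direction with $\mathfrak{Reg}_{\ac}\cdot\Char_{\LL_{\ac}}\!\big(\mathfrak{X}(f\otimes\alpha/D_\infty)_{\textup{tor}}\big)$. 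Matching the two sides of the two-variable equality in first order then gives $\big(\al_{f,1}^{(\alpha)}\big)=\mathfrak{Reg}_{\ac}\cdot\Char_{\LL_{\ac}}\!\big(\mathfrak{X}(f\otimes\alpha/D_\infty)_{\textup{tor}}\big)$ in $\LL_{\ac}\otimes L$, which is the assertion; note that both sides vanish together when the height pairing is degenerate, so no non-degeneracy hypothesis enters.

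The step I expect to be the main obstacle is this final descent: carrying out the cyclotomic control precisely enough to pin down the first-order term as exactly $\mathfrak{Reg}_{\ac}\cdot\Char_{\LL_{\ac}}\!\big(\mathfrak{X}(f\otimes\alpha/D_\infty)_{\textup{tor}}\big)$, with no spurious local factors at the primes dividing $Np\cdot\mathrm{disc}(K/\QQ)$, at $\fp$ or $\fp^c$, or coming from exceptional zeros of $\al^{(\alpha)}$. This demands control of the finite error modules in the snake-lemma sequence, knowledge of the precise local shapes of the Greenberg Selmer conditions and of the $\LL_{\ac}$-adic height, and the already-established facts that $\mathfrak{X}(f\otimes\alpha/K_\infty)$ is $\LL$-torsion and $\mathfrak{X}(f\otimes\alpha/D_\infty)$ has $\LL_{\ac}$-rank one; a further, largely bookkeeping, point is to reconcile Wan's normalization of the Rankin-Selberg $p$-adic $L$-function with the Hida-Perrin-Riou normalization used in this paper.
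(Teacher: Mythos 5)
Your overall architecture --- an Euler-system upper bound, Wan's indefinite-case input to make it sharp, and a descent from $K_\infty$ to $D_\infty$ combined with a $p$-adic height formula --- is the same as the paper's, but two of your steps do not go through as written. First, Wan's theorem is not a divisibility for the Greenberg Selmer group against the Hida--Perrin--Riou function $\al^{(\alpha)}$, and his Rankin--Selberg $p$-adic $L$-function does not ``recover'' $\al^{(\alpha)}$ upon restriction to $\Gamma$. What he proves (recorded here as Theorem~\ref{thm:wanindefinitemain}) is an equality for the Selmer group $\mathfrak{X}_{+-}$ with \emph{relaxed-at-$\fp$, strict-at-$\fp^c$} local conditions against the $p$-adic $L$-function $\calL^{\textup{Hida}}_{f,\alpha}$, which interpolates in the $\Sigma^{(2)}$ range (CM family dominant) and is the image of $\BF_{K_\infty}$ under the \emph{second} reciprocity law; $\al^{(\alpha)}$ interpolates in the $\Sigma^{(1)}$ range and corresponds to the first. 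Transferring Wan's equality to the Greenberg/$\Sigma^{(1)}$ formulation --- equivalently, to sharpness of the Beilinson--Flach index bound for $\FFF_+$ --- requires both explicit reciprocity laws, Poitou--Tate duality, and the Katz-$L$-function normalization of $\widetilde{\col}^{(2,\alpha)}$; this is the content of Theorem~\ref{thm:2varmainconjwithoutpadicL}(i) and Proposition~\ref{prop:imageunderintegralcolemanmaps} and cannot be elided.

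Second, and more seriously, your descent is the step that fails. You propose to specialize the two-variable \emph{Greenberg} equality to the anticyclotomic line by matching first-order terms in $\gamma_\cyc-1$, asserting that a snake-lemma control argument identifies the first-order term of $\Char_{\LL}\mathfrak{X}_{\Gr}(f\otimes\alpha/K_\infty)$ with $\mathfrak{Reg}_{\ac}\cdot\Char\left(\mathfrak{X}(f\otimes\alpha/D_\infty)_{\textup{tor}}\right)$. Since $\al^{(\alpha)}$ vanishes identically on the anticyclotomic line and $\mathfrak{X}_{\Gr}(f\otimes\alpha/D_\infty)$ has rank one, this is an exceptional-zero leading-term formula for a characteristic ideal; it is not a consequence of a control theorem, and proving it is at least as hard as the theorem you are after. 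The paper avoids it entirely: it descends the \emph{rank-zero} structure $\FFF_+$, where $\BF_{D_\infty}$ remains non-trivial (Hsieh's non-vanishing), $\gamma_\cyc-1$ divides neither characteristic ideal, and the descent reduces to the non-exceptional case handled by \cite[Lemme 4 of \S 1.3]{pr84} plus a no-pseudo-null-submodules argument (proof of Corollary~\ref{cor:mainanticyclosharpdefinite}, reused in Corollary~\ref{cor:mainconjequalityindefinitecase}). The regulator and the torsion submodule only enter afterwards, over $D_\infty$, through the Rubin-style formula of Theorem~\ref{thm:rubinstyleformula} (Nekov\'a\v{r}'s height identity applied to $\BF_{D_\infty}$ and its cyclotomic derivative, via Proposition~\ref{prop:thederivativemapstoderivative}) and the bookkeeping of Proposition~\ref{prop:torsionsubmodulepofXminus} and Lemma~\ref{lem:PTdualityandtorsionsubmodules}. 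Relatedly, your ``square of the index'' description of the Euler-system bound is the Heegner-point formulation, not the one used here (the bound is linear in the index of $\BF_{D_\infty}$ in the rank-one $\FFF_+$-Selmer module, with the height formula supplying the conversion), and your proposal addresses only the final assertion of part (ii) of the theorem.
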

\begin{remark}\label{rk:redundant}
As explained in \cite[Remark 7.2.7]{KLZ2}, the hypothesis \textup{\textbf{(H.Dist.)}} (that we introduce below) is redundant in the presence of the hypothesis \textup{\textbf{(wt-2)}}.
\end{remark}
\begin{remark} The first part of this theorem is a combination of Theorem~\ref{thm:mainconjwithoutpadicLfunc}(i), the first reciprocity law (\ref{eqn:firstreciprocitylaw}) for the Beilinson-Flach elements and Theorem~\ref{thm:skinnerurbanwan} below. We note that the result we record in the second portion of (i) is an immediate consequence\footnote{Except for the comparison of the Hida-Perrin-Riou $p$-adic $L$-function to the Skinner-Urban $p$-adic $L$-function that we explain in the proof of Theorem~\ref{thm:skinnerurbanwan} (which we deduce from our computations in the proof of Theorem~\ref{thm:sigma1interpolationformula}) below. This comparison has been suggested in \cite{skinnerurbanmainconj} and has been used by several authors since then; however we were unable to locate a valid proof in the literature. As far as we are aware, our article is the first instance where this comparison is worked out in detail.} of the works of Kato, Skinner-Urban and Wan concerning the Iwasawa main conjectures in the definite case. Our contribution in (i) therefore is only in the first half of this part: Our hypotheses on $N$ in the first portion of  Theorem~\ref{thmmainpordinaryintro}(i) are somewhat weaker then those assumed in \cite{chidahsiehanticyclomainconjformodformscomposito} (which generalizes \cite{BD05}) in that we do not require $p$ be prime to $N$. 

Theorem~\ref{thmmainpordinaryintro}(ii) is Theorem~\ref{thm:torsionpartGreenberg} in the main body of our article. So far as the authors are aware, the entirety of the results announced as part of this portion are new (in the level of generality they are presented). For eigenforms of weight $2$, Howard in \cite{how2LamdaadicGZ} has obtained results similar to Theorem~\ref{thmmainpordinaryintro}(ii). Our methods to prove Theorem~\ref{thmmainpordinaryintro}(ii) are altogether different compared to his: Here we utilize Beilinson-Flach elements along the anticyclotomic tower, and the crucial point is to determine their $p$-local positions, which we do so relying on the explicit reciprocity laws they verify. We further remark that in the preprint \cite{castellaheegner} (that was released while the initial version of the current work was being drafted), Castella has independently employed a similar approach to study the anticyclotomic Iwasawa theory of modular forms of weight $2$ in the indefinite case (and recover the contents of Theorem~\ref{thmmainpordinaryintro}(ii) for forms of weight $2$).
\end{remark}

\begin{remark}
In our preprint~\cite{kbbleipnonord},  we expand on the methods here and treat the case when $f$ is $p$-non-ordinary. 
\end{remark}

There has been tremendous activity related to the contents of Theorem~\ref{thmmainpordinaryintro}; we record here some of those which are instrumental in its proof. Skinner and Urban in \cite{skinnerurbanmainconj} proved a two variable Iwasawa main conjecture for $p$-ordinary forms in the definite setting, and Wan in \cite{xinwanwanrankinselberg} extended this result to cover the indefinite situation as well. These two results rely on the so-called Eisenstein-Klingen ideal method. This method initially produces a lower bound for the Selmer group in question. On the other hand, making use of the recent work of Kings-Loeffler-Zerbes \cite{KLZ2} on Beilinson-Flach elements (which is adjusted slightly in Section~\ref{subsubsec:ESoverK} below for our purposes here), we may produce an upper bound for the same Selmer groups. This yields the equality in the statement of the two-variable Iwasawa's main conjecture. In the definite setting, Chida and Hsieh~\cite{chidahsiehanticyclomainconjformodformscomposito} proved analogous result for higher weight modular forms, extending the earlier work of Bertolini and Darmon \cite{BD05} but using very different techniques then ours. We finally remark that Castella-Kim-Longo in \cite{castellakimlongo} also proved results in line with Theorem~\ref{thmmainpordinaryintro}(i) using a yet different strategy. Their method is  based on an anticyclotomic  version of the variational results of \cite{EPW} and allows the authors to reduce to weight $2$.


\subsection*{Acknowledgements} We would like to thank Henri Darmon and Chan-Ho Kim for enlightening discussions during the preparation of this paper.

\subsection*{Notation and hypothesis}
As above, let $f$ be a cuspidal eigenform of arbitrary even weight $k\geq 2$ which is not a CM-form, level $N$ and with trivial nebentypus. Let $\rho_f:G_{\QQ}\rightarrow \textup{GL}(W_f)$ denote Deligne's Galois representation attached to $f$. Fix also forever a modulus $\mathfrak{f}$ of $K$ that is prime to $p$. We will assume that all the following three hypotheses hold true:
\\\\
\textup{\textbf{(H.Im.)}} The image of $\rho_f$ contains a conjugate of $\textup{SL}_2(\ZZ_p)$.
\\\\
\textup{\textbf{(H.Dist.)}} $f$ is $p$-distinguished (in the sense of \cite[Definition~7.2.6]{KLZ2}).
\\\\
\textup{\textbf{(H.SS.)}} The order of the ray class group of $K$ modulo $\mathfrak{f}$ is prime to $p$.
\\\\
Fix a ring class character $\alpha$ modulo $\mathfrak{f}p^\infty$ of finite order, for which we have $\alpha(\mathfrak{p})\neq \alpha(\mathfrak{p}^c)$. Let $\omega$ denote the $p$-adic Teichm\"uller character. In order  the locally restricted Euler system machinery (as we have recorded in \cite[Appendix A]{kbbleiPLMS}; see also \cite[\S 12]{KLZ2}) to apply in Section~\ref{subsec:ESargument}, we will also need the following hypothesis:
\\\\
\textup{\textbf{(H.nEZ.)}} $v_p\left(\alpha^{-1}\omega^{1-k/2}(\fp^c)\cdot\lambda-1\right)=0$.

We remark that the hypothesis \textup{\textbf{(H.nEZ.)}} is redundant unless  $k\equiv 2$ mod $2(p-1)$. 

In addition to the four properties listed above, we will also consider the following conditions on the sign of the functional equation (at the central critical point): 

(Sign $-$)\,\,\,\,\,\,\,\,\,\,\,\,\,\,\,\,\,\,\,\,\,\,\,\,\,\,\,\,\,\,\,\,\,\,\,\, \,\,\,\,\,\,\,\,\,\,\,\,\,\,\,\,$\epsilon(f/K)=-1$\,.

(Sign $+$)\,\,\,\,\,\,\,\,\,\,\,\,\,\,\,\,\,\,\,\,\,\,\,\,\,\,\,\,\,\,\,\,\,\,\,\, \,\,\,\,\,\,\,\,\,\,\,\,\,\,\,\,$\epsilon(f/K)=+1$\,.
\\
Note that (Sign $-$) ensures that the Hecke $L$-function $L(f/K,s)$ vanishes to odd order at the central critical point $s=k/2$ and places us in the indefinite case in the terminology we have fixed above. We note that the hypotheses of Theorem~\ref{thmmainpordinaryintro}(i) on $N$ (respectively, those in Theorem~\ref{thmmainpordinaryintro}(ii)) ensures that (Sign $+$) (respectively, the condition (Sign $-$)) holds true.

Let $\Phi$ denote the completion of the maximal unramified extension of $L$ and let $\ooo$ denote its valuation ring. For any complete extension $E$ of $\QQ_p$ with valuation ring $\mathcal{O}$, we shall write $\LL^\mathcal{O}$ in place of the ring $\LL\widehat{\otimes}\mathcal{O}$ and likewise define $\LL_{\ac}^\mathcal{O}$ and $\LL_{\cyc}^\mathcal{O}$. We set $\mathcal{R}_E=\LL^\mathcal{O}\otimes_{\mathcal{O}}E$ and similarly define the rings $\mathcal{R}_E^\ac$ and $\mathcal{R}_E^\cyc$. We write $\varpi_E\in \mathcal{O}$ for a fixed uniformizing element of $E$ and when $E=L$, we simply write $\varpi$ instead of $\varpi_{L}$.

For  $?=\emptyset,\cyc$ or $\ac$, let $\iota: \LL^\mathcal{O}_{?}\ra \LL^\mathcal{O}_{?}$ denote the involution induced by the map $\gamma\mapsto\gamma^{-1}$ on the group like elements of $\LL^\mathcal{O}_?$. For a given $\LL^\mathcal{O}_?$-module $M$, we write $M^\iota$ for the module $M\otimes_{\LL_{?}^\mathcal{O},\iota}\LL^\mathcal{O}_{?}$. For $?=\emptyset,\cyc$ or $\ac$, the Iwasawa algebra $ \LL_{?}$ is equipped with a natural Galois action, given by the canonical character 
$$\kappa: G_{K}\twoheadrightarrow\Gamma^{?}\hookrightarrow \LL_?^\times\,.$$
We define $\LL^{\mathcal{O},\iota}_?$ as the rank one $\LL^\mathcal{O}_?$-module twisted as above.

Following \cite[\S6.1]{LLZ2}, we shall consider the following sets of Hecke characters of $\mathbb{A}_K^\times$:
\begin{itemize}
\item $\Sigma^{(1)}:=\{\hbox{Characters of infinity type } (r, s) \,:\, 1-k/2\leq r,s \leq k/2-1\}$
\item $\Sigma^{(2)}:=\{\hbox{Characters of infinity type } (r, s) \,:\, r\leq -k/2, s \geq k/2\}$
\item $\Sigma^{(2^\prime)}:=\{\hbox{Characters }\hbox{ of infinity type } (r, s) \,:\, s\leq -k/2, r \geq k/2\}$\,.
\end{itemize}
The union $\Sigma=\Sigma^{(1)}\sqcup \Sigma^{(2)}\sqcup\Sigma^{(2^\prime)}$ is called the \emph{critical Hecke characters} for $f$.
Our convention here is such that a Hecke character $\xi$ of infinity type $(\ell_1,\ell_2)$ belongs to $\Sigma^{(i)}$ in the sense of \cite[Definition 4.1]{bertolinidarmonprasanna13} if and only if $\chi_\cyc^{-k/2}\xi^{-1}$ belongs to our $\Sigma^{(i)}$  (for $i=1,2,2^\prime$). This alteration is due to the fact that Bertolini-Darmon-Prasanna collect together all characters for which the value $L(f\otimes\xi^{-1},0)$ is critical in the sense of Deligne, whereas we consider those for which the value $L(f\otimes\xi,k/2)$ is critical. 

We also set $\Sigma^{(i)}_{\textup{cc}}:=\{\xi \in \Sigma^{(i)} \hbox{ of infinity type } (r,s): r+s=0 \}$  (for $i=1,2,2^\prime$) and call them the central critical characters. Note that all anticyclotomic characters (which we regard as a Hecke character by class field theory via the geometrically normalized Artin map) which are critical are in fact central critical.

Throughout this article, the functor $(\,\,)\mapsto (\,\,)^*$ will stand for passing to linear duals, whereas  $(\,\,)\mapsto (\,\,)^{\vee}$ passing to Pontryagin duals.
\section{The analytic theory}
\label{sec:analyticLfunctions}
\subsection{$L$-functions for Rankin-Selberg convolutions}
Let $f$ be a cuspidal eigenform of level $\Gamma_0(N)$ (in particular, of trivial central character) and suppose that $\xi$ is an algebraic Hecke character of $K$ of infinity type $(a,b)$.  We will be interested in Rankin-Selberg convolutions $f\otimes \psi$. More precisely, let $\pi=\pi_f$ denote the automorphic representation corresponding to $f$ and let $\pi(\xi)$ be that corresponding to $\xi$. The Rankin-Selberg (automorphic) $L$-function\footnote{We will intentionally write $L(s,\star)$ for automorphic $L$-functions, whereas we shall denote the Hecke-$L$-functions by $L(\star,s)$.} $L(s,\pi\times\pi(\xi))$ is defined as an Euler product (where each Euler factor will be of degree $4$) and in fact, it is the same as the the twisted base-change $L$-function $L(s,\pi_K,\xi)$ up to a shift, where $\pi_K$ is the base change of $\pi$ to $\GL_2(\mathbb{A}_K)$. 

The Rankin-Selberg $L$-function $L(s,\pi\times\pi(\xi))$ admits a meromorphic continuation (where the only possibile poles are simple poles at $s=0$ and $1$) to the entire complex plane by the work of Jacquet and Jacquet-Langlands.    Furthermore, it verifies a functional equation of the form 
$$\LL(s,\pi\times\pi(\xi))=\epsilon(s,\pi\times\pi(\xi))\,\LL(1-s,\widetilde{\pi}\times\pi(\xi^{-1}))$$ 
where $\widetilde{\pi}$ is the contragredient of $\pi$ and $\LL(s,\star)$ stands for the completed (automorphic) $L$-function. The completed $L$-function is in fact entire, unless $\pi(\xi)\cong \widetilde{\pi}\otimes |\cdot|^r$ for some real number $r$. We shall be working with a non-CM form in the main body of our article, and therefore the $L$-functions we will deal with here will be always holomorphic everywhere.

The only case of interest for us will be the situation when the algebraic Hecke character $\xi$ is anticyclotomic (equivalently, denoting the character of $G_K$ associated to $\xi$ by class field theory also by $\xi$, we have $\xi(c\sigma c^{-1})=\xi^{-1}(\sigma)$ for (an arbitrary lift of) the generator $c$ of  $\Gal(K/\QQ)$ and every $\sigma \in G_K$), say of infinity type $(m,-m)$. This means that we have $\xi\big{|}_{\mathbb{A}_\QQ^\times}=\mathds{1}$  and that $\LL(s,\pi\times\pi(\xi))=\LL(s,\widetilde{\pi}\times\pi(\xi^{-1}))$. The functional equation now takes the following form 
$$\LL(s,\pi\times\pi(\xi))=\epsilon(s,\pi\times\pi(\xi))\,\LL(1-s,{\pi}\times\pi(\xi))\,$$
with sign $\epsilon(f\otimes\xi/K):=\epsilon(1/2,\pi\times\pi(\xi)) \in \{\pm1\}\,.$

We note that the Hecke $L$-function $L(f\otimes\xi/K,s)$ is related to $L(s,\pi\times\pi(\xi))$ via
$$L\left(f\otimes\xi/K,s+\frac{k-1}{2}\right)=L(s,\pi\times\pi(\xi))$$ 
when $\xi$ is an anticyclotomic Hecke character.

\subsection{$p$-adic $L$-function of Hida and Perrin-Riou}\label{S:padicL}

Let $\Bf_1$ and $\Bf_2$ be two Hida families of tame levels $N_1$ and $N_2$ respectively. Suppose that $N$ is an integer divisible by both $N_1$ and $N_2$ but not by $p$. There exists a 3-variable $p$-adic $L$-function $L_p(\Bf_1,\Bf_2,s)$, where $s$ is the cyclotomic variable. Here, we use the normalization in \cite[\S5.3]{LLZ1}. More precisely, if $f_1$ and $f_2$ are weight $k$ (resp., weight $l$) specializations of $\Bf_1$ and $\Bf_2$, the value of the $p$-adic $L$-function is given by
\[
L_p(f_1,f_2,j)=\frac{\left\langle\overline{ f_1},e_{\rm ord}\left(f_2^{[p]}\times\cE_{1/N}(j-l,k-1-j)\right)\right\rangle_{N,p}}{\langle f_1,f_1\rangle_{N,p}},
\]
where $\overline{f_1}$ denotes the complex conjugate of $f_1$, $f_2^{[p]}$ is the $p$-depletion of $f_2$, $\langle\sim,\sim\rangle_{N,p}$ denotes the Petersson inner product  at level $\Gamma_1(N)\cap \Gamma_0(p)$ and $\cE_{\alpha}(\phi_1,\phi_2)$ denotes the $p$-depleted Eisenstein series 
\begin{align}
&\sum_{n\ge 1,p\nmid n}\left(\sum_{0<d|n}\phi_1(d)\phi_2(n/d)\left[e^{2\pi id/N}-\phi_1\phi_2(-1)e^{-2\pi id/N}\right]\right)q^n\notag\\
=&\sum_{n\ge 1,p\nmid n}\left(\sum_{d|n}\sign(d)\phi_1(d)\phi_2(n/d) e^{2\pi id/N}\right)q^n\label{eq:eisenstein}
\end{align}
whenever $\phi_1$ and $\phi_2$ are two characters on $\Zp^\times$ and $\alpha\in\frac{1}{N}\ZZ/\ZZ$, as given by \cite[Definition~5.3.1]{LLZ1}. Note that we write our characters additively here (so an integer $j$ that appears as an argument of a $p$-adic $L$-function stands for its evaluation under the character $\chi_\cyc^j$).  We recall from \cite[Theorem~2.7.3 and Remark~2.7.4(i)]{KLZ2} that this $p$-adic $L$-function has the following interpolation formula. If $f_1$ and $f_2$ are as above with levels coprime to $p$, let  $\alpha_i$ and $\beta_i$ be the roots to the Hecke polynomial of $f_i$ at $p$, with $\alpha_i$ being the unit root. If $j$ is an integer such that $l\le j\le k-1$ and $\chi$ is a finite character on $\Gamma^\cyc$, then
\begin{equation}\label{eq:interpolationHida}
L_p(f_1,f_2,j+\chi)=\frac{\cE(f_1,f_2,j+\chi)}{\cE(f_1)\cE^*(f_1)}\times\frac{i^{k-l}N^{2j-k-l+2}\Gamma(j)\Gamma(s-l+1)L(f,g,\chi^{-1},j)}{2^{2j+k-l}\pi^{2j+1-l}\langle f_1,f_1\rangle_{N_{f_1}}},
\end{equation}
where $\cE(f_1)=1-\beta_1/p\alpha_1$, $\cE^*(f_1)=1-\beta_1/\alpha_1$ and
\[
\cE(f_1,f_2,j+\chi)=\begin{cases}
\left(1-\frac{p^{j-1}}{\alpha_1\alpha_2}\right)\left(1-\frac{p^{j-1}}{\alpha_1\beta_2}\right)\left(1-\frac{\beta_1\alpha_2}{p^j}\right)\left(1-\frac{\beta_1\beta_2}{p^j}\right)&\text{if $\chi$ is trivial,}\\
\tau(\chi)^2\cdot \left(\frac{p^{2j-2}}{\alpha_1^2\alpha_2\beta_2}\right)^n&\text{if $\chi$ is of conductor $p^n>1$.}
\end{cases}
\]

Recall that $\ff$ is a modulus of $K$ with $(p,\ff)=1$. We write $H_{\ff p^\infty}$ for the ray class group of $K$ of conductor $\ff p^\infty$. As in \cite[\S6.2]{LLZ2}, we define
\[
\Theta=\sum_{\fa:(\fa,p)=1}[\fa]q^{N_{K/\QQ}(\fa)}\in\Lambda(H_{\ff p^\infty})[[ q]],
\]
where $\fa$ runs over ideals of $K$ and $[\fa]$ denotes the corresponding element of $H_{\ff p^\infty}$ via Artin reciprocity map. Given any character $\omega$ of $H_{\ff p^\infty}$, $\Theta(\omega)$ is then the $p$-depleted theta series attached to $\omega$. Note that its level divides $N_{K/\QQ}(\ff)\cdot{\rm disc}(K/\QQ)\cdot p^\infty$. On replacing $N$ by the lowest common multiple of the level of $f$ and $N_{K/\QQ}(\ff)\cdot{\rm disc}(K/\QQ)$ if necessary, we define a 2-variable $p$-adic $L$-function 
\[
L_p(f/K,\Sigma^{(1)})\in\Lambda_L(H_{\ff p^\infty}):=\mathfrak{o}[[H_{\ff p^\infty}]]\otimes L,
\]
which assigns a character $\omega$ of $H_{\ff p^\infty}$ the value
\[
L_p(f/K,\Sigma^{(1)})(\omega)=\frac{\left\langle \overline{ f^\lambda},e_\ord(\Theta(\omega)\times\cE_{1/N}(k/2-1-\omega_\QQ,k/2-1))\right\rangle_{N,p}}{\langle  {f^\lambda}, {f^\lambda} \rangle_{N,p}} ,
\]
where $ f^\lambda$ is the ordinary $p$-stabilization of $f$ and $\omega_\QQ$ is the character given by the composition of $\omega$ with $\Zp^\times\hookrightarrow (\calO_K\times\Zp)^
\times\rightarrow H_{\ff p^\infty}$. 
 We remark that if $f^\lambda$ and $\omega$ vary inside a Hida family $\Bf_1$ and $\Bf_2$ (the latter being a CM Hida family over $\Lambda(H_{\ff\fp^\infty})$), we recover the Hida $p$-adic $L$-function $L_p(\Bf_1,\Bf_2,s)$. More specifically, if  $g$ is the specialization of $\Bf_2$ that corresponds to the theta series attached to $\omega$,  we have the formula
 \[
 L_p(f^\alpha,g,k/2)=L_p(f/K,\Sigma^{(1)})(\omega).
 \]
\begin{theorem}
\label{thm:sigma1interpolationformula}
Let $\psi$ be a finite Hecke character of infinity type $(0,0)$ (in particular, it belongs to $\Sigma^{(1)}$) whose conductor divides $\ff p^\infty$. Let  $\psi_p$ be the $p$-adic avatar of $\psi$. If the conductor of $\psi$ is coprime to $p$ and $j$ is an integer such that $1\le j\le k-1$, then
  \[ L_p(f/K, \Sigma^{(1)})(\psi_{p}\cdot \chi_\cyc^{j-k/2}) = \frac{\cE(f,\psi,j)}{\cE(f)\cE^*(f)}\times\frac{i^{k-1}N^{2j-k+1}\Gamma(j)^2}{2^{2j+k-1}\pi^{2j}}\times \frac{L(f/K, \psi, j)}{ \langle f, f\rangle_{N}}  ,\]
  where  $\cE(f)=1-\lambda'/p\lambda$,  $\cE^*(f)=1-\lambda'/\lambda$,  $\lambda$  and $\lambda'$ are the roots of the Hecke polynomial of $f$ at $p$, with the former being the unit root and
  \[
  \cE(f,\psi,j)=\prod_{\fq\in\{\fp,\fp^c\}}\left(1-\frac{p^{j-1}}{\lambda\psi(\fq)}\right)\left(1-\frac{\lambda'\psi(\fq)}{p^{j}}\right).
  \]
  If the $p$-primary part of the conductor of $\psi$ is given by $\fp^m(\fp^c)^n$ with $m+n\ge 1$, then the value of the $p$-adic $L$-function at $\psi_p\cdot \chi_\cyc^{j-k/2}$ for $1\le j\le k-1$ is given by
  \[(-1)^{j-k/2}\times\frac{(Np^{m+n})^{j-1/2}}{\lambda^{m+n-1}}\times\frac{\cE(f,\psi,j)}{\cE(f)\cE^*(f)}\times\frac{i^{k}\Gamma(j)^2\tau(\psi)}{2^{2j+k-1}\pi^{k}}\times \frac{L(f/K, \psi, j)}{ \langle f, f\rangle_{N}}  ,\]
where $\tau(\psi)$ is root number of $\Theta(\overline{\psi_p})$.
\end{theorem}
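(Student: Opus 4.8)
The plan is to read off both assertions from explicit formulae for Rankin-Selberg $p$-adic $L$-functions: the first from the interpolation property (\ref{eq:interpolationHida}) of Hida's three-variable $p$-adic $L$-function, the second from a direct evaluation of the Petersson quotient defining $L_p(f/K,\Sigma^{(1)})$.

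\emph{When the conductor of $\psi$ is prime to $p$.} Because $p$ splits in $K$ and $\psi_p$ is unramified at $\fp$ and at $\fp^c$, the (weight one) theta series $g:=\Theta(\psi_p)$ is $p$-ordinary, with Hecke polynomial at $p$ equal to $\big(X-\psi(\fp)\big)\big(X-\psi(\fp^c)\big)$. Using the compatibility of $L_p(f/K,\Sigma^{(1)})$ with Hida's three-variable $p$-adic $L$-function at CM points of its second argument — concretely, the identity $L_p(f^\lambda,g,k/2)=L_p(f/K,\Sigma^{(1)})(\psi_p)$ recalled above — together with a straightforward comparison of the two defining integrals showing that evaluating $L_p(f/K,\Sigma^{(1)})$ at $\psi_p\chi_\cyc^{j-k/2}$ corresponds to evaluating Hida's $L$-function at cyclotomic argument $j$, one gets
\[
L_p(f/K,\Sigma^{(1)})(\psi_p\cdot\chi_\cyc^{j-k/2})=L_p(f^\lambda,g,j),\qquad 1\le j\le k-1.
\]
Substituting $l=1$, $\alpha_1=\lambda$, $\beta_1=\lambda'$, $\{\alpha_2,\beta_2\}=\{\psi(\fp),\psi(\fp^c)\}$ and trivial wild character into (\ref{eq:interpolationHida}), the factor $\cE(f_1,f_2,j)$ telescopes to $\prod_{\fq\in\{\fp,\fp^c\}}\big(1-p^{j-1}/\lambda\psi(\fq)\big)\big(1-\lambda'\psi(\fq)/p^{j}\big)=\cE(f,\psi,j)$, while $\cE(f_1)=1-\lambda'/p\lambda=\cE(f)$ and $\cE^*(f_1)=1-\lambda'/\lambda=\cE^*(f)$; with $l=1$ the archimedean prefactor of (\ref{eq:interpolationHida}) collapses to $i^{k-1}N^{2j-k+1}\Gamma(j)^2/2^{2j+k-1}\pi^{2j}$, as claimed. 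What remains is to identify the Rankin-Selberg value $L(f,g,\chi^{-1},j)$ of (\ref{eq:interpolationHida}), with $\chi$ trivial, with $L(f/K,\psi,j)$: this is the factorisation of the Rankin-Selberg convolution as the twisted base-change $L(s,\pi_K,\psi)$ recorded in Section~\ref{sec:analyticLfunctions}, together with the shift $L(f\otimes\psi/K,s+\tfrac{k-1}{2})=L(s,\pi_f\times\pi(\psi))$; one must also reconcile the Petersson norm $\langle f_1,f_1\rangle_{N_f}$ appearing in (\ref{eq:interpolationHida}) with $\langle f,f\rangle_N$ at the (possibly enlarged) level $N$ used in the definition of $L_p(f/K,\Sigma^{(1)})$, which is a routine comparison of Euler factors.

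\emph{When the $p$-part of the conductor of $\psi$ is $\fp^{m}(\fp^{c})^{n}$ with $m+n\ge1$.} Now $\Theta(\psi_p)$ is ramified at $p$, and for $m,n\ge1$ it is not even $p$-ordinary, so (\ref{eq:interpolationHida}) no longer applies; instead I would evaluate
\[
\frac{\big\langle\overline{f^\lambda},\,e_\ord\big(\Theta(\psi_p)\times\cE_{1/N}(k/2-1-\psi_\QQ,\,k/2-1)\big)\big\rangle_{N,p}}{\langle f^\lambda,f^\lambda\rangle_{N,p}}
\]
directly, by the Rankin-Selberg method applied to the $q$-expansion (\ref{eq:eisenstein}) of the $p$-depleted Eisenstein series and that of $\Theta(\psi_p)$. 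The $p$-stabilisation and ordinary projection of $f$ again contribute the ratio $\cE(f,\psi,j)/\cE(f)\cE^*(f)$; the global zeta integral, with its component at $p$ removed, yields $L(f/K,\psi,j)$ together with the archimedean factor $i^{k}\Gamma(j)^2/2^{2j+k-1}\pi^{k}$ and the sign $(-1)^{j-k/2}$. I expect the crux to be the local computation at the primes above $p$: because both $\cE_{1/N}(\dots)$ and $\Theta(\psi_p)$ are ramified there, the local integral produces the normalising factor $(Np^{m+n})^{j-1/2}\lambda^{1-(m+n)}$ and the root number $\tau(\psi)$ of $\Theta(\overline{\psi_p})$, and pinning these down precisely demands careful tracking of the Gauss-sum (Atkin-Lehner) normalisation defining $\tau(\psi)$, of the conductor of $\Theta(\psi_p)$ at $p$, and of the effect of the $p$-depletion operator on the level structure at $p$. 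This last computation is also the one that, as indicated in the introduction, underlies the comparison of the Hida-Perrin-Riou $p$-adic $L$-function with its Skinner-Urban counterpart.
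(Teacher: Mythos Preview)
Your treatment of the unramified case is correct and matches the paper: both simply specialise the interpolation formula~(\ref{eq:interpolationHida}) at $f_1=f^\lambda$, $f_2=\Theta(\psi_p)$ (weight $l=1$), $\chi$ trivial, and read off the Euler factors and archimedean constants.

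For the ramified case, however, your outline diverges from the paper's argument and remains too schematic to be a proof. You frame the computation as a Rankin--Selberg zeta integral with a ``local computation at the primes above $p$'' producing the factor $(Np^{m+n})^{j-1/2}\lambda^{1-(m+n)}\tau(\psi)$, but you do not carry this out, and the adelic language you invoke is not how the paper proceeds. The paper's proof is entirely classical: it first uses the differential-operator identities
\[
\cE_{1/N}(k/2-\psi_\QQ,k/2-1)=\d^{k/2-1}\cE_{1/N}(-\psi_\QQ,0),\qquad
\Theta(\psi_p\chi_\cyc^{j-k/2})=\d^{j-k/2}\Theta(\psi_p),
\]
together with Hida's lemmas relating $e_\ord\circ\d^r$ to the holomorphic projection of Shimura's $\delta^{r}$, to rewrite the inner product in nearly-holomorphic form. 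It then applies the Atkin--Lehner identities $W_{Np^{s+1}}\!\circ\! W_{Np}(h)=p^{ks/2}h(p^sz)$ and $W_{Np^*}^2=(-1)^{\textup{wt}}$, which is where the powers of $p$ and of $\lambda$ actually appear, and invokes Hida's variant of Shimura's Rankin--Selberg formula (\cite[Theorem~6.6]{hida88}) to obtain $D_{Np}(f^\lambda,W_{Np^{s+1}}\Theta(\overline{\psi_p}),j)$; the root number $\tau(\psi)$ enters only through the functional equation $W_{Np^{s+1}}\Theta(\overline{\psi_p})=-i\,\tau(\psi_p)\,\Theta(\psi_p)$. Finally the range $1\le j<(k+1)/2$ versus $(k+1)/2<j\le k-1$ has to be handled separately (the latter via the functional equation of $\cE_{1/N}$). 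None of these concrete steps---the differential-operator reduction, the Atkin--Lehner gymnastics, the appeal to Shimura/Hida's formula, or the split into two $j$-ranges---is present in your sketch; without them one cannot pin down the precise constants, which is exactly the content of the theorem.
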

\begin{proof}
We note that the first formula is given by \eqref{eq:interpolationHida} (on taking $\chi=1$ and $f_2$ the theta series corresponding to $\psi$). So, we suppose that $\Theta(\psi)$ is of level $Np^{s+1}$, and primitive at $p^{s+1}$ with $s\ge0$ (so that $s+1=m+n$, with the notation of the statement of our theorem). We first consider the case $1\le j<(k+1)/2$.

On employing the expansion \eqref{eq:eisenstein}, we may check that
\[
\cE_{1/N}(k/2-\psi_\QQ,k/2-1)=\d^{k/2-1}\left(\cE_{1/N}(-\psi_\QQ,0)\right),
\]
here $\d$ is the differential operator $q\cdot d/dq$. Similarly, we have 
\[
\Theta(\psi_p\cdot \chi_\cyc^{j-k/2})=\d^{j-k/2}\left(\Theta(\psi_p)\right).
\]
Then, by \cite[Lemma~6.5(iv)]{hida88} and \cite[Lemma~5.3]{hida85}, we have
\[
e_\ord(\Theta(\psi_p\cdot \chi_\cyc^{j-k/2})\times\cE_{1/N}(k/2-1-\psi_\QQ,k/2-1))=(-1)^{j-k/2}e_\ord\circ \Xi(\Theta(\psi)\times\delta_1^{j-1}\cE_{1/N}(-\psi_\QQ,0)),
\]
where $\Xi$ denotes the holomoprhic projection and $\delta_1^{j-1}$ is Shimura's differential operator.
Following the calculations in \cite[pages 224-225]{hidabook}, we have
\begin{align*}
&\left\langle \overline{ f^\lambda},e_\ord(\Theta(\psi)\times\cE_{1/N}(k/2-1-\psi_\QQ,k/2-1))\right\rangle_{N,p}\\
=&
\frac{(Np)^{k/2-1}\cdot p^{(k-1)s}}{(-1)^{j-k/2}\lambda^{s}}\times\left\langle\overline{W_{Np}\left(f^\lambda\right)}(p^sz), \Xi(\Theta(\overline{\psi_p})\times\delta_1^{j-1}\cE_{1/N}(\psi_\QQ,0))\right\rangle_{N,p^{s+1}},
\end{align*}
where we write $W_{Np^*}$ for the slash operator by the matrix $\begin{pmatrix}
0&-1\\
Np^*&0
\end{pmatrix}$ (with an appropriate weight, as normalized in \cite[page 6]{hida88}. This is different from the normalization in \cite[page 126]{hidabook}, which explains the factor $(Np)^{k/2-1}$).

Consider the identity
\[
W_{Np^{s+1}}\circ W_{Np}(h)=p^{ks/2}h(p^sz),
\]
where $h$ is a modular form of weight $k$ and level $Np$. This allows us to rewrite the inner product above as
\[
\frac{(Np)^{k/2-1}\cdot p^{(k/2-1)s}}{(-1)^{j-k/2}\lambda^{s}}\times\left\langle W_{Np^{s+1}}\circ W_{Np}^2\left(\overline{f^\lambda}\right), \Xi(\Theta(\overline{\psi_p})\times\delta_1^{j}\cE_{1/N}(\psi_\QQ,0))\right\rangle_{N,p^{s+1}}.
\]
But $W_{Np^{*}}^2(h)=(-1)^{l}h$ if $h$ is of weight $l$. This tells us that the quantity above is equal to
\[
\frac{(Np)^{k/2-1}\cdot p^{(k/2-1)s}}{(-1)^{j-k/2+1}\lambda^{s}}\times\left\langle W_{Np^{s+1}}\left(\overline{f^\lambda}\right), \Xi(W_{Np^{s+1}}^2(\Theta(\overline{\psi_p}))\times\delta_1^{j}\cE_{1/N}(\psi_\QQ,0))\right\rangle_{N,p^{s+1}}.
\]
As we have assumed that $1\le j<(k+1)/2$, we may apply \cite[Theorem~6.6]{hida88}, which is a modification of a classical result of Shimura, to deduce that this is equal to
\[
\frac{(Np)^{k/2-1}\cdot p^{(k/2-1)s}}{(-1)^{j-k/2+1}\lambda^{s}}\times\frac{i^{k-1}\Gamma(j)^2}{2^{2j+k-1}\pi^{k}(Np^{s+1})^{(k-2j-1)/2}}\times D_{Np}\left(f^\lambda,W_{Np^{s+1}} \left(\Theta(\overline{\psi_p})\right),j\right),
\]
where $D_{Np}$ is the Rankin-Selberg product as defined in \textit{loc. cit.}
We can simplify this expression as
\begin{equation}\label{eq:HidaRS}
\frac{(Np^{s+1})^{j-1/2}}{(-1)^{j-k/2+1}\lambda^s}\times\frac{i^{k-1}\Gamma(j)^2}{2^{2j+k-1}\pi^{k}}\times D_{Np}\left(f^\lambda,W_{Np^{s+1}} \left(\Theta(\overline{\psi_p})\right),j\right).
\end{equation}
 Furthermore,  we have the functional equation
\[
W_{Np^{s+1}}\left(\Theta(\overline{\psi_p})\right)=-i\tau(\psi_p)\times\Theta(\psi_p)
\]
(c.f. \cite[page 21]{PR}). Hence, we may further rewrite \eqref{eq:HidaRS} as
\[
\frac{(Np^{s+1})^{j-1/2}}{(-1)^{j-k/2}\lambda^s}\times\frac{i^{k}\Gamma(j)^2\tau(\psi)}{2^{2j+k-1}\pi^{k}}\times D_{Np}\left(f^\lambda,\Theta(\psi_p),j\right).
\]
Since the Euler factor of $f$ at $p$ agrees with that of $f^\lambda$, we may replace $f^\lambda$ by $f$ above, which gives the formula claimed.

For the case $(k+1)/2<j\le k-1$, the same proof goes through. The only difference is that, in order to apply \cite[Theorem~6.6]{hida88}, we would have to first of all make use of the functional equation of $\cE_{1/N}$ to replace it by its dual. This can be done using the calculations of Hida in \cite[page 328]{hidabook}.
\end{proof}

\begin{corollary}\label{cor:SU}
Let $\LSU$ be the $2$-variable $p$-adic $L$-function of Skinner-Urban defined in \cite[Theorem~12.7]{skinnerurbanmainconj} and $\psi$ as in Theorem~\ref{thm:sigma1interpolationformula}. Then, the quotient
\[\frac{\LSU(\psi_p\cdot\chi_\cyc^{k-2})}{L_p(f/K, \Sigma^{(1)})(\psi_{p}\cdot \chi_\cyc^{k/2-1})}
\]
is a $p$-adic unit if  \textup{\textbf{(H.Im.)}}, \textup{\textbf{(H.Dist.)}} and \textup{\textbf{(H.SS.)}}  hold.
\end{corollary}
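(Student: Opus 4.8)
The plan is to establish the statement by a direct comparison of the two interpolation formulae. Both $\LSU$ and $L_p(f/K,\Sigma^{(1)})$ are elements of the two-variable Iwasawa algebra $\Lambda_L(H_{\ff p^\infty})$, pinned down by their values at arithmetic characters; the crucial observation is that the character $\psi_p\cdot\chi_\cyc^{k-2}$ for $\LSU$ and the character $\psi_p\cdot\chi_\cyc^{k/2-1}$ for $L_p(f/K,\Sigma^{(1)})$ correspond, in their respective normalizations, to \emph{one and the same} classical Rankin--Selberg special value, namely $L(f/K,\psi,k-1)$ --- the difference between the two exponents reflecting the Tate twist that relates Deligne's representation $W_f$, with which Skinner--Urban work, to its self-dual twist $V_f$, with which we work (compatibly with the conventions fixed for $\Sigma^{(1)}$), together with the difference in how the cyclotomic variable is centred. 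Thus the first step is to write out the interpolation formula of \cite[Theorem~12.7]{skinnerurbanmainconj} at $\psi_p\cdot\chi_\cyc^{k-2}$ and place it beside the formula of Theorem~\ref{thm:sigma1interpolationformula} at $\psi_p\cdot\chi_\cyc^{k/2-1}$, and then evaluate the quotient factor by factor. (For $k\geq4$ the argument $k-1$ lies in the region of absolute convergence of the Rankin--Selberg Euler product, so $L(f/K,\psi,k-1)\neq0$ and the quotient is a genuine nonzero $p$-adic number; for $k=2$ the asserted identity is to be read as one between the ideals generated by these two elements.)

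In the quotient the complex value $L(f/K,\psi,k-1)$ cancels outright. The next step is to isolate the $p$-adic content. The two formulae share the same ``modified Euler factor'' structure at $\fp$ and $\fp^c$ --- Skinner--Urban's $p$-adic $L$-function, like ours, is built from the ordinary $p$-stabilization $f^\lambda$ --- so the product $\cE(f,\psi,k-1)=\prod_{\fq\in\{\fp,\fp^c\}}\bigl(1-p^{k-2}/\lambda\psi(\fq)\bigr)\bigl(1-\lambda'\psi(\fq)/p^{k-1}\bigr)$ occurs in both and cancels; what survives is the ``interpolation defect'' $\cE(f)\cE^*(f)=(1-\lambda'/p\lambda)(1-\lambda'/\lambda)$ appearing in Theorem~\ref{thm:sigma1interpolationformula}, together with the corresponding factors on the Skinner--Urban side. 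These are $p$-adic units: since $\lambda$ is the unit root and $\lambda\lambda'=p^{k-1}$ one has $v_p(\lambda'/\lambda)=k-1>0$ and $v_p(\lambda'/p\lambda)=k-2\geq0$, so $\cE^*(f)\equiv1\pmod{\fm}$ always and $\cE(f)\equiv1\pmod{\fm}$ when $k>2$ (for $k=2$ the factor $\cE(f)=1-\lambda^{-2}$ cancels identically against its Skinner--Urban counterpart). Of the remaining elementary factors --- powers of $2$, $i$, $N$, ${\rm disc}(K/\QQ)$, $N_{K/\QQ}(\ff)$, of $p$, of $\lambda$, the $\Gamma$-factors $\Gamma(k-1)^2$, and the Gauss sum $\tau(\psi)$ --- the ones that occur identically in the two formulae (the powers of $p$, the $\Gamma$-factors, $\tau(\psi)$) cancel, and the rest are $p$-adic units because $p\geq5$, $p\nmid N\,{\rm disc}(K/\QQ)$, $(p,\ff)=1$ and $v_p(\lambda)=0$; here \textup{\textbf{(H.SS.)}} is what guarantees, in addition, that the theta-series and ray-class normalization constants (and the passage between $H_{\ff p^\infty}$ and the cyclotomic--anticyclotomic Iwasawa algebra through which the comparison is phrased) contribute only $p$-adic units, so that the matching of arithmetic characters above is literal.

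This leaves a single non-formal ingredient, which I expect to be the main obstacle: the comparison of periods. In Theorem~\ref{thm:sigma1interpolationformula} the normalizing period is the Petersson norm $\langle f,f\rangle_N$, whereas \cite[Theorem~12.7]{skinnerurbanmainconj} uses a canonical (cohomological) period; one must show that the ratio of the two --- equivalently, that the archimedean power of $\pi$ left over from the previous step, bundled with the congruence-module discrepancy --- is the $\iota_p$-image of a $p$-adic unit. This is exactly where \textup{\textbf{(H.Im.)}} and \textup{\textbf{(H.Dist.)}} enter: under the big-image hypothesis together with $p$-distinguishedness, the localized Hecke algebra at $\fm_f$ is a complete intersection and the relevant multiplicity-one module is free (via the Taylor--Wiles method and its refinements), so that $\langle f,f\rangle_N$ and the canonical period agree up to a $p$-adic unit --- this is the comparison that is suggested, but not proved, in \cite{skinnerurbanmainconj}. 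In practice essentially all of the labour of the proof will be the careful reconciliation of Skinner--Urban's conventions with ours (normalization of the Eisenstein series, choice of distinguished Hecke root, Gauss-sum conventions, the implicit Tate twists, and above all the precise period), together with the bookkeeping verifying that every discrepancy so produced is a $p$-adic unit; there is no single conceptual leap, but, as the footnote in the introduction records, this comparison does not appear to have been carried out carefully in the literature, which is why it is included here.
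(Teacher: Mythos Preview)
Your overall strategy---compare the two interpolation formulae term by term and argue that every surviving factor is a $p$-adic unit---is exactly the paper's approach. The paper's own proof is a single sentence: ``This follows from the respective interpolation formulae and the fact that the root number has norm one whereas the norm of the Gauss sum is the square root of its conductor.'' So on the level of method there is nothing to distinguish.

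There is, however, a discrepancy of emphasis that you should note. You write that the Gauss-sum contributions $\tau(\psi)$ ``occur identically in the two formulae'' and therefore ``cancel''. The paper singles out precisely the opposite point: its formula in Theorem~\ref{thm:sigma1interpolationformula} carries the \emph{root number} $\tau(\psi)$ of the theta-series (archimedean norm $1$), whereas the Skinner--Urban formula carries a genuine Gauss sum (archimedean norm $\sqrt{\text{conductor}}$). These do not cancel on the nose; the paper's observation is that the discrepancy between them is exactly a power of $p$ (and of $N_{K/\QQ}(\ff)\cdot\mathrm{disc}(K/\QQ)$) that is absorbed by the factor $(Np^{m+n})^{j-1/2}$ in Theorem~\ref{thm:sigma1interpolationformula}, together with the corresponding factor on the Skinner--Urban side, and the archimedean norm computation is what tells you this bookkeeping closes up. This is the one substantive point the paper isolates, and your write-up has it backwards.

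Conversely, your extended discussion of the period comparison---invoking \textbf{(H.Im.)} and \textbf{(H.Dist.)} to match $\langle f,f\rangle_N$ with Skinner--Urban's canonical period via Taylor--Wiles---does not appear in the paper's proof at all. Either the authors regard this as folded into ``the respective interpolation formulae'' without further comment, or they are working with a version of $\LSU$ already normalized by the Petersson norm. Your instinct that this is where the hypotheses enter is reasonable, and arguably your account is more honest about what needs checking; but be aware that the paper does not flag it, and you should verify against \cite[Theorem~12.7]{skinnerurbanmainconj} which period actually appears there before committing to this as ``the main obstacle''.
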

\begin{proof}
This follows from the respective interpolation formulae and the fact that the root number has norm one whereas the norm of the Gauss sum is the square root of its conductor.
\end{proof}

We may also swap the role of $f$ and the theta series to define a second $p$-adic $L$-function, namely
\[
L_p(f/K,\Sigma^{(2)}):\lambda\mu\mapsto\frac{\left\langle\overline{g_\lambda},e_{\ord}\left(f\times\cE_{1/N}(\mu-k/2,-k/2-\lambda_\QQ-\mu)\right)\right\rangle}{\langle g_\lambda,g_\lambda\rangle}.
\]
where $\lambda$ is a Hecke character on $H_{\ff p^\infty}$ and $\mu$ factors through the norm map $H_{\ff p^\infty}\rightarrow H_{p^\infty}\rightarrow\Zp^\times$ and $g_\lambda$ is the corresponding CM eigenform corresponding to $\lambda$. However, the resulting function is no longer an element of $\Lambda_L(H_{\ff p^\infty})$, but rather its total ring of quotients. This is due to the Petersson inner products of the theta series (which range in the Hida family) that appear in the denominators of the values this function it ought to interpolate. The interpolation property reads as follows.

\begin{theorem}Let $\psi$ be a Hecke character in $\Sigma^{(2)}$ whose conductor divides $\ff$, with infinity-type $(a,b)$. Let $\psi_p$ for the $p$-adic avatar of $\psi$, then
 \[ L_{p}(f/K, \Sigma^{(2)})(\psi_{p}) =\frac{\cE(\psi,f,k/2)}{\cE(\psi)\cE^*(\psi)}\times\frac{i^{b-a-k+1}N^{a+b-k+3}b!(b-k)!}{2^{3b-a-k+3}\pi^{2b-k+3}}\times \frac{ L(f/K, \psi, k/2)}{ \langle g_{\lambda}, g_{\lambda}\rangle_{N_{g_\lambda}}}.\]
     where $g_{\lambda}$ is theta series corresponding to $\lambda = \psi |\cdot|^{-b}$ and $\cE(\psi,f,k/2),\cE(\psi),\cE^*(\psi)$ are some Euler factors.
\end{theorem}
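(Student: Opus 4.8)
The plan is to deduce this from the interpolation formula \eqref{eq:interpolationHida} for the three--variable Hida $p$-adic $L$-function, after identifying $L_p(f/K,\Sigma^{(2)})$ with a specialization of $L_p(\Bf_1,\Bf_2,s)$; the point is that the construction of $L_p(f/K,\Sigma^{(2)})$ is the one of $L_p(f/K,\Sigma^{(1)})$ with the roles of the fixed eigenform $f$ and of the CM Hida family interchanged. Concretely, I would let $\Bf_1$ be the CM Hida family through the theta series $\Theta(\lambda)$ with $\lambda=\psi|\cdot|^{-b}$ (whose weight-$(b-a+1)$ specialization is $g_\lambda$) and $\Bf_2=\mathbf{f}$, and check, exactly as for the identity $L_p(f^\alpha,g,k/2)=L_p(f/K,\Sigma^{(1)})(\omega)$ recorded above, that the defining Petersson quotient for $L_p(f/K,\Sigma^{(2)})(\psi_p)$ equals --- up to the shifts built into our normalizations --- $L_p(\Bf_1,\Bf_2,j)$, the value of the Hida $L$-function at the integer $j$ with $k\le j\le b-a$ attached to the infinity type $(a,b)$ (so this is the case ``$\chi$ trivial'' of \eqref{eq:interpolationHida}, which applies because the conductor of $\psi$ divides $\ff$, hence is prime to $p$). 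This matching is a bookkeeping exercise: one verifies that the arguments $\mu-k/2$ and $-k/2-\lambda_\QQ-\mu$ of the depleted Eisenstein series in the definition of $L_p(f/K,\Sigma^{(2)})$ agree with the arguments $j-l$, $k-1-j$ of \eqref{eq:interpolationHida} once $(l,k)$ there is read as $(k,b-a+1)$, and one tracks the $p$-depletions and the passage between levels $N$ and $N_{g_\lambda}$; the latter produces the Euler factors $\cE(\psi)$, $\cE^*(\psi)$, which are the analogues of $\cE(f)$, $\cE^*(f)$ in Theorem~\ref{thm:sigma1interpolationformula}.

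Granting this, \eqref{eq:interpolationHida} with $\chi$ trivial yields the stated formula at once: the factor $\cE(f_1,f_2,j)$ there is a product of four Euler polynomials in the Satake parameters of $g_\lambda$ and $f$ at $p$ which, since $p$ splits and $g_\lambda$ is CM, regroups into a product over $\fq\in\{\fp,\fp^c\}$ and is the factor $\cE(\psi,f,k/2)$ of the statement; the ratio of $\Gamma$-factors and the powers of $i$, $N$, $\pi$ and $2$ collect into the stated archimedean constant; and it remains only to identify the Rankin--Selberg convolution $L$-value of $f$ and $g_\lambda$ occurring in \eqref{eq:interpolationHida} with $L(f/K,\psi,k/2)$. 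For the latter one invokes the comparison between Rankin--Selberg and twisted base-change $L$-functions recalled in Section~\ref{sec:analyticLfunctions} together with $\pi(g_\lambda)=\pi(\psi|\cdot|^{-b})$, which turns $L(s,\pi_f\times\pi(g_\lambda))$ into $L(f/K,\psi,\cdot)$ up to the shift placing $j$ at the point $k/2$. Alternatively, one could argue directly by unfolding the Petersson product exactly as in the proof of Theorem~\ref{thm:sigma1interpolationformula} --- differential operators $\d$ and $\delta$, commuting $e_\ord$ past $\d$ via \cite[Lemma~6.5(iv)]{hida88} and \cite[Lemma~5.3]{hida85}, the Atkin--Lehner/Fricke relations and the functional equations of $\cE_{1/N}$ and of the theta series, and Shimura's integral representation \cite[Theorem~6.6]{hida88} --- but since $\psi$ has conductor prime to $p$ this is precisely the ``easy'' case of that argument.

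The step I expect to be the main obstacle is this last identification of $L$-values with all the normalization constants in place: tracking the powers of $N$, the Gauss-sum and root-number normalizations and the precise Euler factors, and making sure the integer $j$ fed into \eqref{eq:interpolationHida} really lies in the range $k\le j\le b-a$ for every $\psi\in\Sigma^{(2)}$ --- if it does not, one first replaces $\cE_{1/N}$ by its dual via its functional equation, as in the case $(k+1)/2<j\le k-1$ treated at the end of the proof of Theorem~\ref{thm:sigma1interpolationformula}, and argues as before. One should also flag the a priori subtlety, already noted when $L_p(f/K,\Sigma^{(2)})$ was introduced, that the Petersson norms $\langle g_\lambda,g_\lambda\rangle$ in the denominators are not $p$-adic units, so that this is really an identity of values at the points of $\mathrm{Spec}\,\Lambda_L(H_{\ff p^\infty})$ where those norms are nonzero and $L_p(f/K,\Sigma^{(2)})$ itself lives only in the total ring of fractions of $\Lambda_L(H_{\ff p^\infty})$.
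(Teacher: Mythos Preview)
The paper actually gives no proof for this theorem: it is simply stated, immediately after the definition of $L_p(f/K,\Sigma^{(2)})$, and the text moves on to the decomposition $H_{\ff p^\infty}=\Delta\times\Gamma$. So there is nothing to compare against directly.

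That said, your proposal is exactly the argument one would expect the authors had in mind. The construction of $L_p(f/K,\Sigma^{(2)})$ is, by design, the $\Sigma^{(1)}$ construction with the roles of $f$ and the CM family swapped; since the conductor of $\psi$ is assumed to divide $\ff$ (hence is prime to $p$), the relevant specialization falls in the ``$\chi$ trivial'' range of \eqref{eq:interpolationHida}, and the formula drops out just as in the first (unramified) case of Theorem~\ref{thm:sigma1interpolationformula}. Your identification of $\cE(\psi),\cE^*(\psi)$ as the analogues of $\cE(f),\cE^*(f)$ and of $\cE(\psi,f,k/2)$ as the four-term Euler product $\cE(f_1,f_2,j)$ regrouped over $\fp,\fp^c$ is correct, as is the passage from the Rankin--Selberg $L$-value to $L(f/K,\psi,k/2)$ via $\pi(g_\lambda)=\pi(\psi|\cdot|^{-b})$ and the base-change comparison of \S\ref{sec:analyticLfunctions}. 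The caveat you raise about $\langle g_\lambda,g_\lambda\rangle$ and the total ring of fractions is also precisely the point the paper flags when introducing $L_p(f/K,\Sigma^{(2)})$.

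In short: your outline is correct and is the natural companion to the paper's proof of Theorem~\ref{thm:sigma1interpolationformula}; the paper simply omits the argument here.
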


We write $H_{\mathfrak{f}p^\infty}=\Delta\times\Gamma$, where $\Delta$ is a finite abelian  group. We end this subsection by introducing some distributions on $\Gamma$ that will play a role in the rest of this article, and which are obtained by restricting  $L_{p}(f/K, \Sigma^{(1)})$  and $L_{p}(f/K, \Sigma^{(2)})$.

\begin{defn} For each character $\alpha$ of $\Delta$, we define the subset $\Sigma^{(i,\alpha)}(\Gamma) \subset \Sigma^{(i)}$ (for $i=1,2$) to consist of those Hecke characters  whose $p$-adic avatars restrict on $\Delta$ to $\alpha$ and set  
$$\al_f^{(\alpha)} :=L_{p}(f/K, \Sigma^{(1)})\Big{|}_{\Sigma^{(1,\alpha)}(\Gamma)} \in \RR_L\,.$$
Fixing a topological generator $\gamma_\cyc$ of $\Gamma^\cyc$ and lifting it to $\Gamma$ (the chosen lift is still denoted by the same symbol), we may write
$$\al_f^{(\alpha)}=\al_{f,0}^{(\alpha)} + \al_{f,1}^{(\alpha)}\cdot \frac{\gamma_\cyc-1}{\log_p\chi_\cyc(\gamma_\cyc)}\mod (\gamma_\cyc-1)^2 $$
where $\al_{f,j}^{(\alpha)} \in \RR_L^\ac$ for $j=0,1$ and $\chi_\cyc$ is the cyclotomic character. Note that $\al_{f,0}^{(\alpha)}$ is simply the restriction of the two-variable $p$-adic $L$-function $\mathfrak{L}_f^{(\alpha)}$ to the anticyclotomic line and $\al_{f,1}^{(\alpha)}$ is the derivative of $\mathfrak{L}_f^{(\alpha)}$ with respect to the cyclotomic variable, and restricted to the anticyclotomic line.

In Section~\ref{subsec:twovarmainconj} (where we will appeal to the results of X. Wan in the indefinite case in order to sharpen some of the bounds we obtain in this article with the aid of Euler system machine) we will also need to consider the restriction 
$$\al_{f,\Sigma^{(2)}}^{(\alpha)} :=L_{p}(f/K, \Sigma^{(2)})\Big{|}_{\Sigma^{(2,\alpha)}(\Gamma)}\,$$
of $L_{p}(f/K, \Sigma^{(2)})$. 
\end{defn}
\subsection{Rankin-Selberg $L$-functions and root number calculations}
\label{subsec:rootnumbers}
In this section, we will review some basic facts concerning the sign of the root number for the Hecke $L$-functions of the Rankin-Selberg products of the form $f\times \theta_\xi$ where $\xi$ is a Hecke character of infinity type $(-m,m)$ and whose $p$-adic avatar factors through $\Gamma^\ac$. The following proposition (together with the ``minimalist philosophy") is the sign post in our work.
\begin{proposition}
\label{prop:rootresults}
\textup{i)} For every ring class character $\chi$ whose conductor is a sufficiently large power of $p$ we have 
$${\epsilon(f\otimes \chi/K)=\epsilon(f/K).}$$
If the prime $p$ does not divide $N$ or else $p$ splits in $K/\QQ$, this equality holds for every $\chi$. 
\\
\textup{ii)} For every Hecke character $\xi \in \Sigma_{\textup{cc}}^{(2)}$ of trivial conductor 
$${\epsilon(f\otimes \xi/K)=-\epsilon(f/K).}$$
\textup{iii)} Suppose that $p$ splits in $K/\QQ$ and fix a character $\xi \in \Sigma_{\textup{cc}}^{(2)}$ whose conductor is a power of $p$. Then for every ring class character $\chi$ with $p$-power conductor we have 
$${\epsilon(f\otimes \chi\xi/K)=-\epsilon(f/K).}$$
\end{proposition}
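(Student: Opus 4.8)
The plan is to compute the global root number $\epsilon(f\otimes\xi/K)$ as a product of local root numbers $\epsilon_v(f\otimes\xi/K)$ over all places $v$ of $K$, exploiting the fact that each of the three twisting characters ($\chi$ a ring class character of $p$-power conductor, $\xi\in\Sigma_{\textup{cc}}^{(2)}$, or their product) differs from the trivial character only at $p$ (and at the archimedean places, in the case of $\xi$). First I would set up notation: write $\epsilon(f\otimes\xi/K) = \prod_v \epsilon_v$, and recall that since $f$ has trivial nebentypus and $\xi$ is anticyclotomic, the base-change automorphic representation $\pi_K\otimes\xi$ is self-dual, so each local sign lies in $\{\pm 1\}$ and the relevant comparison is place-by-place. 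For a finite place $v\nmid p$ where $\xi$ (resp.\ $\chi\xi$) is unramified, the local representation $\pi_{K,v}\otimes\xi_v$ is an unramified twist of $\pi_{K,v}$, hence $\epsilon_v(f\otimes\xi/K)=\epsilon_v(f/K)$; these places contribute identically on both sides and cancel in the ratio. So everything is concentrated at $p$ and $\infty$.

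For part (i): a ring class character $\chi$ of $p$-power conductor is unramified outside $p$, and at the archimedean places it is trivial on the connected component, so the archimedean local signs are unchanged. At the primes above $p$: when $p$ splits as $\fp\fp^c$, the two local representations $\pi_{K,\fp}$ and $\pi_{K,\fp^c}$ are each $\GL_2(\Qp)$-representations, and $\chi_\fp$, $\chi_{\fp^c}$ are characters of $\Qp^\times$ with $\chi_{\fp^c}=\chi_\fp^{-1}$ (anticyclotomic condition). One then checks that $\epsilon_\fp(\pi_{K,\fp}\otimes\chi_\fp)\cdot\epsilon_{\fp^c}(\pi_{K,\fp^c}\otimes\chi_\fp^{-1})=\epsilon_\fp(\pi_{K,\fp})\cdot\epsilon_{\fp^c}(\pi_{K,\fp^c})$: this is the standard computation that the $\fp$ and $\fp^c$ contributions of a ramified twist multiply to a Gauss-sum-type factor and its inverse (using $\tau(\chi_\fp)\tau(\chi_\fp^{-1})=\chi_\fp(-1)p^{n}$ up to the conductor normalization, and that the Satake parameters at $\fp$ and $\fp^c$ are related by conjugation), so the product is $1$. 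When $p$ is inert or ramified but $\chi$ has sufficiently large conductor, the local sign of a highly-ramified twist of a fixed $\GL_2$-representation stabilizes to a value depending only on the central character (which is trivial), giving the stable equality; when $p\nmid N$ the representation $\pi_{K,v}$ is unramified (or an unramified principal series) and the twist computation is again explicit. Collecting, the ratio $\epsilon(f\otimes\chi/K)/\epsilon(f/K)$ is a product of local factors all equal to $1$.

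For part (ii): here $\xi\in\Sigma_{\textup{cc}}^{(2)}$ has trivial conductor but non-trivial infinity type $(-m,m)$ with $m\geq k/2$, so the only local signs that change are the archimedean ones. The local $L$- and $\epsilon$-factor at the complex place of $K$ is a shifted $\Gamma$-factor, and the shift in infinity type caused by tensoring with $\xi_\infty$ pushes the pair of Hodge–Tate weights from the ``interpolation range" $\Sigma^{(1)}$ across to the range $\Sigma^{(2)}$; the resulting change in the archimedean root number is exactly a sign $-1$ (this is the classical fact, e.g.\ as in \cite{bertolinidarmonprasanna13} or \cite{LLZ2}, that the root number flips when the critical character leaves the central strip). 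Part (iii) then follows by combining (i) and (ii): writing $\chi\xi$ with $\xi$ a fixed character in $\Sigma_{\textup{cc}}^{(2)}$ of $p$-power conductor and $\chi$ a ring class character of $p$-power conductor, I would first apply (ii) to get $\epsilon(f\otimes\xi/K)=-\epsilon(f/K)$, then apply the local computation of (i) at $p$ (now to the pair $\xi$ versus $\chi\xi$, both ramified only at $p$, with the archimedean data unchanged) to conclude $\epsilon(f\otimes\chi\xi/K)=\epsilon(f\otimes\xi/K)=-\epsilon(f/K)$.

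The main obstacle I anticipate is the bookkeeping at $p$ in the non-split / higher-conductor cases of part (i): one must carefully match Deligne's local $\epsilon$-factor conventions (and the additive-character/measure normalizations) against the automorphic $\epsilon$-factors of $\pi_{K,v}\otimes\chi_v$, and verify the stabilization statement ``for $\chi$ of sufficiently large $p$-power conductor, $\epsilon_v(\pi_{K,v}\otimes\chi_v)$ depends only on $\chi_v|_{\ZZ_p^\times}$ and the central character" with enough uniformity that it holds for \emph{all} $\chi$ once $p\nmid N$ or $p$ splits. The split case, by contrast, is a clean Gauss-sum cancellation and should be routine; and the archimedean computation in (ii) is standard, so the real care is needed only in isolating which finite-conductor hypotheses are genuinely required.
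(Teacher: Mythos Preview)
Your plan is broadly correct, but it takes a different route from the paper's. You work over $K$, decomposing the global root number as a product over places of $K$ and comparing local $\epsilon$-factors of $\pi_{K,v}\otimes\chi_v$ directly (Gauss-sum cancellation at the split primes above $p$, stability of $\epsilon$-factors at inert $p$, and a $\Gamma$-factor shift at the complex place). The paper instead works over $\QQ$ via the Rankin--Selberg decomposition $f\times\theta_\psi$ and the set $S(\psi)=\{\ell:\epsilon_\ell(f\times\theta_\psi)=-\eta_\ell(-1)\}$ of Cornut--Vatsal, for which one knows a priori that only \emph{inert} primes with $\pi_{f,\ell}$ special or supercuspidal can belong to $S(\psi)$. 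This single structural fact short-circuits almost everything: in the split case $p\notin S(\psi)$ automatically (no Gauss-sum bookkeeping needed), and (ii) reduces to checking that the finite parts of $S(\xi)$ and $S(\mathds{1})$ agree while $\infty$ lies in exactly one of them. Part (iii) is then one line: since $p$ splits, $p\notin S(\chi\xi)$, and the rest is as in (ii). Your approach trades this for more explicit local computations; it works, but the Cornut--Vatsal packaging is what makes the paper's proof so short.

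One small logical slip in your plan for (iii): you propose to ``first apply (ii) to get $\epsilon(f\otimes\xi/K)=-\epsilon(f/K)$'', but (ii) is stated only for $\xi$ of \emph{trivial} conductor, whereas in (iii) the character $\xi$ is allowed any $p$-power conductor. You should either prove (ii) directly for $\xi$ of $p$-power conductor (your own archimedean argument plus your split-$p$ cancellation already give this), or, as the paper does, run the whole argument for $\chi\xi$ at once rather than factoring through $\xi$. Also, your inert-$p$ stabilization sketch in (i) is a bit loose: the stability theorem tells you $\epsilon(\pi_{K,v}\otimes\chi_v)$ eventually depends only on $\chi_v$ and the central character, but you still need to check this stabilized value agrees with $\epsilon(\pi_{K,v})$; the paper avoids this entirely by citing \cite{cornutvatsal2007}.
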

\begin{proof}
The first claim is \cite[Lemma 1.1]{cornutvatsal2007}. The second assertion may also be deduced from these calculations as follows (which we essentially reproduce from loc.cit. here for the convenience of the reader).

Let $\eta$ be the quadratic Hecke character associated to $K/\QQ$. For any Hecke character $\psi$, the sign of the global root number is given as the product $\epsilon(f\otimes \psi/K)=\prod_{\ell} \epsilon_\ell(f\times\theta_\psi)$ of local root numbers (where $\ell$ ranges over the places of $\QQ$). Following \cite[Section 1.1]{cornutvatsal2007},  we define the set
$$S(\psi):=\{\ell:\,\epsilon_\ell(f\times\theta_\psi)=-\eta_\ell(-1)\}$$
where $\eta_\ell$ is the local component of $\eta$. It is easy to see (using the product formula for the Hecke character $\eta$) that $\epsilon(f\otimes \psi/K)=(-1)^{\#S(\psi)}$. 

As explained in  \cite{cornutvatsal2007}, if a non-archimedean prime $\ell$ belongs to $S(\psi)$, then $\ell$ is necessarily inert and the local constituent $\pi_\ell$ of $\pi_f$ is either special or supercuspidal. Furthermore, since we assumed that $\xi$ has trivial conductor, it follows that an inert prime $\ell$ for which $\pi_\ell$ is supercuspidal belongs to $S(\xi)$ if and only if the $\ell$-adic valuation of the conductor of $\pi_\ell$ is odd. Retracing this paragraph for the trivial character, we conclude for a finite prime that $\ell \in S(\xi) \iff \ell \in S(\mathds{1})$. On the other hand, $\infty \in S(\mathds{1})$ but $\infty \notin S(\xi)$ (since $m\neq 0$, the local constituent $\pi(\xi)_\infty$ is non-trivial and $\pi_{f,\infty}$ is discrete series of weight $k\geq 2$). This shows that $\#S(\xi)=\#S(\mathds{1})-1$ and the proposition follows.

The proof of the third asserted identity is identical to the proof of the second since we readily know that $p \notin S(\chi\xi)$. 
\end{proof}
\begin{corollary}
\label{cor:cyclotomicderivative}
If $\epsilon(f/K)=-1$, then $\al_{f,0}^{(\alpha)}=0$ for every $\alpha$ as above.
\end{corollary}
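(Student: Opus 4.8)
The plan is to combine the interpolation formula of Theorem~\ref{thm:sigma1interpolationformula} with the root-number computation of Proposition~\ref{prop:rootresults}(i), and then conclude by a density argument on the anticyclotomic line.

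First I would unwind the definitions: $\al_{f,0}^{(\alpha)}$ is the restriction to the anticyclotomic line of $\mathfrak{L}_f^{(\alpha)}=L_p(f/K,\Sigma^{(1)})\big|_{\Sigma^{(1,\alpha)}(\Gamma)}$, so it lies in $\RR_L^\ac=\LL_\ac^\oo\otimes_\oo L$. For each finite-order character $\chi$ of $\Gamma^\ac$, inflating $\chi$ to $\Gamma$ along the anticyclotomic projection and multiplying by $\alpha$ produces a finite-order anticyclotomic Hecke character $\psi=\alpha\chi$ of $K$, of conductor dividing $\ff p^\infty$ and of infinity type $(0,0)$; since $k\geq 2$, this $\psi$ is a central critical character lying in $\Sigma^{(1,\alpha)}(\Gamma)$. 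By construction, the value of $\al_{f,0}^{(\alpha)}$ at $\chi$ is $L_p(f/K,\Sigma^{(1)})(\psi_p)$, i.e.\ the value at $j=k/2$ in the notation of Theorem~\ref{thm:sigma1interpolationformula} (the cyclotomic twist $\chi_\cyc^{j-k/2}$ being trivial on the anticyclotomic line). As $1\leq k/2\leq k-1$, that theorem — in whichever of its two cases applies, according to whether the conductor of $\psi$ is prime to $p$ — expresses this value as a product of an explicit, finite factor with $L(f/K,\psi,k/2)/\langle f,f\rangle_N$.

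Next I would invoke Proposition~\ref{prop:rootresults}(i): since $p$ splits in $K/\QQ$, the displayed identity there holds for \emph{every} ring class character, so $\epsilon(f\otimes\psi/K)=\epsilon(f/K)=-1$; by the functional equation recorded in Section~\ref{sec:analyticLfunctions}, the completed $L$-function then vanishes to odd order at the centre, whence $L(f/K,\psi,k/2)=0$. Substituting this into the interpolation formula yields $\al_{f,0}^{(\alpha)}(\chi)=0$; crucially, this conclusion is insensitive to the precise shape of the Euler-type factors $\cE(f,\psi,k/2),\cE(f),\cE^*(f)$ in the formula, because $L(f/K,\psi,k/2)$ is itself a factor of the whole expression. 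Finally, fixing a topological generator $\gamma_\ac$ of $\Gamma^\ac\cong\ZZ_p$ identifies $\RR_L^\ac$ with $\oo[[T]]\otimes_\oo L$, in which by Weierstrass preparation a nonzero element has only finitely many zeros in the open unit disk; since the characters $\chi$ above give the infinitely many points $T=\chi(\gamma_\ac)-1$ there, $\al_{f,0}^{(\alpha)}$ must vanish identically.

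I do not expect a genuine obstacle here: the argument is essentially routine once the ingredients are in place. The only points that require a little care are the bookkeeping that $\psi=\alpha\chi$ really satisfies the hypotheses of Theorem~\ref{thm:sigma1interpolationformula} (finite order, conductor dividing $\ff p^\infty$, and the correct $\Delta$-component), and the observation that the vanishing of the archimedean $L$-value $L(f/K,\psi,k/2)$ forces the interpolated $p$-adic value to vanish regardless of the $p$-adic multipliers appearing in the formula.
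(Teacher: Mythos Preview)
Your proposal is correct and follows essentially the same approach as the paper's (one-line) proof: the paper simply says ``This follows from the interpolation formulae in Theorem~\ref{thm:sigma1interpolationformula}'', and you have spelled out precisely what that entails, namely combining the interpolation formula with the root-number identity of Proposition~\ref{prop:rootresults}(i) (valid for all ring class characters since $p$ splits) and a Weierstrass-preparation density argument on $\Gamma^\ac$. The only implicit point you correctly flag is that $\psi=\alpha\chi$ must be a ring class character for Proposition~\ref{prop:rootresults}(i) to apply; this is the intended reading given the paper's standing hypothesis that $\alpha$ is a ring class character modulo $\ff p^\infty$.
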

\begin{proof}
This follows form the interpolation formulae in Theorem~\ref{thm:sigma1interpolationformula}.
\end{proof}
In view of Proposition~\ref{prop:rootresults}, one is led to the following prediction, conforming to the ``minimalist philosophy'':  
\begin{conj}
\label{conj:minimalism}
Assume that the eigenform $f$ is non-exceptional in the sense of \cite[Definition 1.3]{cornutvatsal2007}. For all but finitely many ring class characters $\chi$ with $p$-power conductor, the central critical values  $L(f,\chi,k/2)$ are non-vanishing if  the hypothesis (\textup{Sign} $+$) holds true. 

Suppose in addition that the prime $p$ splits in $K/\QQ$ and fix a character $\xi \in \Sigma^{(2)}_{\textup{cc}}$ whose conductor is a power of $p$. Then for all but finitely many ring class characters $\chi$ with $p$-power conductor, the values $L(f,\chi\xi,k/2)$ are non-vanishing if (\textup{Sign} $-$) holds true.
\end{conj}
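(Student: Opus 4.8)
The plan is to deduce Conjecture~\ref{conj:minimalism} from an appropriate non-vanishing statement for $p$-adic $L$-functions, invoking the interpolation formulae of Theorem~\ref{thm:sigma1interpolationformula} together with the root-number analysis of Proposition~\ref{prop:rootresults}. Of course a conjecture cannot be \emph{proved}, but the point of the discussion is to exhibit it as a plausible and in fact \emph{expected} consequence of known structures; so what I would actually write is an explanation of why the minimalist philosophy applies here, and under which additional inputs it becomes a theorem.

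First I would record the dichotomy forced by Proposition~\ref{prop:rootresults}: if $\epsilon(f/K)=+1$ then every ring class character $\chi$ of $p$-power conductor (once the conductor is large, or unconditionally if $p\nmid N$ or $p$ splits) has $\epsilon(f\otimes\chi/K)=+1$, so the functional equation permits $L(f,\chi,k/2)\neq 0$; dually, if $\epsilon(f/K)=-1$, then after twisting by a fixed $\xi\in\Sigma^{(2)}_{\textup{cc}}$ of $p$-power conductor one again lands in sign $+1$, so $L(f,\chi\xi,k/2)$ is permitted to be non-zero. Thus in both regimes the relevant central values sit in the branch where generic non-vanishing is the natural expectation. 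Next I would reinterpret these values $p$-adically: by Theorem~\ref{thm:sigma1interpolationformula} (in the (Sign $+$) case) and by the analogous interpolation for $L_p(f/K,\Sigma^{(2)})$ combined with the cyclotomic-derivative formalism that produces $\al_{f,1}^{(\alpha)}$ (in the (Sign $-$) case), the family $\{L(f,\chi,k/2)\}_\chi$ (resp. $\{L(f,\chi\xi,k/2)\}_\chi$), up to explicit non-zero archimedean periods and Euler factors, is the collection of specializations of the anticyclotomic $p$-adic $L$-function $\al_{f,0}^{(\alpha)}$ (resp. $\al_{f,1}^{(\alpha)}$) at the finite-order characters of $\Gamma^{\ac}$. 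Here one must check that the Euler factors $\cE(f,\psi,j)$ and the constants do not vanish identically over this family — this uses the hypothesis $\alpha(\fp)\neq\alpha(\fp^c)$ and the non-exceptionality of $f$ in the sense of \cite[Definition 1.3]{cornutvatsal2007}, exactly the role played by \textbf{(H.nEZ.)} elsewhere.

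The substance is then the statement that $\al_{f,0}^{(\alpha)}$ (resp. $\al_{f,1}^{(\alpha)}$) is \emph{non-zero} as an element of $\RR_L^{\ac}$: a non-zero element of the one-variable Iwasawa algebra over a $p$-adic field has only finitely many zeros among the finite-order characters, which is precisely the asserted ``all but finitely many''. In the (Sign $+$) case this non-vanishing is furnished by the divisibility in Theorem~\ref{thmmainpordinaryintro}(i) together with the Skinner--Urban--Wan main conjecture input (Theorem~\ref{thm:skinnerurbanwan}): the $p$-adic $L$-function generates the characteristic ideal of a Selmer group which is visibly not the zero module, hence $\al_{f,0}^{(\alpha)}\neq 0$; alternatively one can cite the non-vanishing of Bertolini--Darmon \cite{BD05} / Chida--Hsieh \cite{chidahsiehanticyclomainconjformodformscomposito} directly. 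In the (Sign $-$) case the analogous non-triviality of $\al_{f,1}^{(\alpha)}$ follows from Theorem~\ref{thmmainpordinaryintro}(ii): there the Greenberg Selmer group has rank one, its torsion submodule has a non-zero characteristic ideal, and $\al_{f,1}^{(\alpha)}$ generates $\mathfrak{Reg}_{\ac}\cdot\Char(\mathfrak{X}(f\otimes\alpha/D_\infty)_{\textup{tor}})$ up to $L$, which is non-zero once one knows the $\LL_{\ac}$-adic height regulator $\mathfrak{Reg}_{\ac}$ does not vanish.

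The main obstacle, and the reason this is phrased as a conjecture rather than a theorem, is precisely the non-vanishing of the height regulator $\mathfrak{Reg}_{\ac}$ (equivalently, the non-degeneracy of the $\LL_{\ac}$-adic height pairing) in the (Sign $-$) case — an anticyclotomic analogue of the classical non-degeneracy conjecture for $p$-adic heights, which is not known in general. In the (Sign $+$) case there is no such obstacle and the statement is in fact a theorem under the running hypotheses; the honest write-up would say so, attribute it to the combination of Theorem~\ref{thmmainpordinaryintro}(i) with the cited work of Bertolini--Darmon and Chida--Hsieh, and isolate the height-regulator non-vanishing as the sole remaining ingredient needed to upgrade the (Sign $-$) half from ``conjecturally expected, consistent with the minimalist philosophy'' to ``proved''.
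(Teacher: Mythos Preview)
The statement is a \emph{conjecture}, and the paper does not attempt to prove it. Immediately after stating it, the paper records the best-known partial results towards it---Theorem~\ref{thm:chidahsieh} (Chida--Hsieh, for the (Sign $+$) case under additional hypotheses on $N$) and Theorem~\ref{thm:hsiehnonvanishing} (Hsieh, for the (Sign $-$) case under additional hypotheses)---and then \emph{uses} those results as input: they feed into Theorem~\ref{thm:explicitreciprocity}(ii) to show the anticyclotomic Beilinson--Flach class is non-trivial, which in turn drives the Euler-system argument behind Theorem~\ref{thmmainpordinaryintro}. So there is no ``paper's own proof'' to compare against; the conjecture is genuinely open in the generality stated.

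Your proposal has two substantive problems beyond this. First, the logic is circular: you attempt to deduce the non-vanishing of $\al_{f,0}^{(\alpha)}$ (resp.\ $\al_{f,1}^{(\alpha)}$) from Theorem~\ref{thmmainpordinaryintro}, but the proof of that theorem \emph{presupposes} the non-triviality furnished by Chida--Hsieh and Hsieh (via Theorem~\ref{thm:explicitreciprocity}). Your ``alternatively one can cite Chida--Hsieh directly'' is in fact the only non-circular route, and it is exactly what the paper does---but only under extra hypotheses, which is why the general statement remains a conjecture.

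Second, in the (Sign $-$) case you have misidentified the interpolating object. The values $L(f,\chi\xi,k/2)$ for $\xi\in\Sigma^{(2)}_{\textup{cc}}$ lie in the $\Sigma^{(2)}$ interpolation range and are specializations of $\al_{f,\Sigma^{(2)}}^{(\alpha)}$ (the restriction of $L_p(f/K,\Sigma^{(2)})$), \emph{not} of $\al_{f,1}^{(\alpha)}$. The element $\al_{f,1}^{(\alpha)}$ is the cyclotomic derivative of the $\Sigma^{(1)}$ $p$-adic $L$-function along the anticyclotomic line; it does not interpolate $\Sigma^{(2)}$-values, and its non-vanishing (even granting Theorem~\ref{thmmainpordinaryintro}(ii) and non-degeneracy of $\mathfrak{Reg}_{\ac}$) would not yield the asserted statement about $L(f,\chi\xi,k/2)$. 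The correct reduction is to the non-vanishing of the anticyclotomic restriction of $\al_{f,\Sigma^{(2)}}^{(\alpha)}$, which is precisely what Hsieh's theorem supplies under its hypotheses.
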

\begin{remark}
\label{rem:whenexceptional}
The eigenform $f$ is called exceptional if $\pi _f\cong\pi_f\otimes\eta$. As explained in \cite{cornutvatsal2007}, $f$ is exceptional precisely when $f$ is the theta-series associated to a Hecke character $\psi$. In this case, one has a factorization
$$L(\pi_f\times\chi,s)=L(\psi\chi,s)\,L(\psi\chi^\prime,s)$$ 
where $\chi^\prime$ is the outer twist of $\chi$ by $\Gal(K/\QQ)$. Furthermore, both $L$-functions in the factorization of $L(\pi_f\times\chi,s)$ above have a functional equation with sign $\pm1 $. In the situations when both signs equal $-1$, in which case $L(\pi_f\times\chi,s)$ has at least a double zero at its central critical point, despite having $+1$ as its sign. This is the reason this case is excluded in Conjecture~\ref{conj:minimalism}. We note that the anticyclotomic Iwasawa theory in the exceptional case is precisely the subject of the articles \cite{agboolahowardordinary, arnoldhigherweightanticyclo}.
\end{remark}
The following two theorems are the best-known results (to our knowledge) towards Conjecture~\ref{conj:minimalism}. These are essential for the proofs of Theorem~\ref{thm:explicitreciprocity} (that guarantees the non-triviality of the restriction of the Beilinson-Flach Euler system restricted to the anticyclotomic line). The first is due to Chida and Hsieh~\cite[Theorem D]{chidahsiehcrelle}.
\begin{theorem}[Chida-Hsieh]
\label{thm:chidahsieh}
Suppose that $p^2\nmid N$ and the discriminant $\rm{disc}(K/\QQ)$ of $K$ is prime to $N$. Factor $N=p^{\alpha_p}N^+N^-$ where $N^+$ (resp., $N^-$) is only divisible by primes that are split (resp., inert) in $K$ and assume that $N^-$ is the square-free product of an odd number of primes. Suppose also that there exists a prime $\ell \nmid pN\rm{disc}(K/\QQ)$ with $\ell>k-2$ and such that the \textup{mod} $\ell$ representation associated to $f$ is absolutely irreducible. Then for all but finitely many ring class characters $\chi$ with $p$-power conductor, the central critical value  $L(f,\chi,k/2)$ is non-zero.
\end{theorem}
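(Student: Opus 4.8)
The plan is to deduce this non-vanishing statement from the assertion that an associated anticyclotomic \emph{theta element} is non-zero modulo the maximal ideal, which forces it to vanish at only finitely many characters of $p$-power conductor. Since $N^-$ is a square-free product of an \emph{odd} number of primes inert in $K$, the Jacquet--Langlands correspondence transfers (the $p$-stabilization of) $f$ to an eigenform $\varphi$ on the definite quaternion algebra $B_{/\QQ}$ ramified exactly at the primes dividing $N^-$ and at $\infty$; as $f$ has weight $k$, one realizes $\varphi$ as a function on the finite double coset space $X=B^\times\backslash\widehat{B}^\times/\widehat{R}^\times$ (for an Eichler order $R$ of level $pN^+$) valued in the coefficient module $\mathrm{Sym}^{k-2}$. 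An optimal embedding $K\hookrightarrow B$ yields, for each $n$, a Gross point $x_n$ of conductor $p^n$, and for a ring class character $\chi$ of conductor dividing $p^n$ one forms the toric period $P_\chi(\varphi)=\sum_{\sigma}\chi(\sigma)\,\varphi(x_n^{\sigma})$, the sum over the Galois group of the ring class field of conductor $p^n$ over $K$. The Waldspurger/Gross-type explicit central value formula (as in \emph{loc.\ cit.}, going back to Gross and Zhang in weight $2$ and built on Shimura's algebraicity theorem in higher weight) then gives, up to an explicit non-zero constant and the Petersson norm $\langle\varphi,\varphi\rangle$,
\[
L(f,\chi,k/2)\;\doteq\;|P_\chi(\varphi)|^2,
\]
so it suffices to show that $P_\chi(\varphi)\neq0$ for all but finitely many $\chi$ of $p$-power conductor.

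Next, choosing $\varphi$ to be $p$-integral and primitive --- this is where the integral normalization of the period, pinned down with the help of the auxiliary prime $\ell$, intervenes --- the periods $P_\chi(\varphi)$ are interpolated, as $\chi$ ranges over ring class characters of $p$-power conductor, by a single element $\Theta_\varphi$ of the relevant Iwasawa algebra, which (since $p$ splits and $p\geq5$) is, after splitting off a finite character, isomorphic to $\calO[[\Gamma^\ac]]\cong\calO[[T]]$. It therefore suffices to prove that $\Theta_\varphi\not\equiv0\pmod{\mathfrak m}$: a non-zero power series with vanishing $\mu$-invariant has, by $p$-adic Weierstrass preparation, only finitely many zeros among continuous characters of $p$-power conductor.

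The heart of the argument is this mod-$p$ non-vanishing, and it is carried out by the equidistribution method of Vatsal and Cornut--Vatsal. Were $\Theta_\varphi\equiv0\pmod{\mathfrak m}$, Fourier inversion would force $\overline{\varphi}(x_n^{\sigma})=0$ for every $\sigma$ and every $n$, where $\overline{\varphi}$ is the reduction of $\varphi$ modulo $\mathfrak m$. But the Gross points $\{x_n^{\sigma}\}_\sigma$ equidistribute in $X$ as $n\to\infty$ --- a theorem of Vatsal proved via Ratner's equidistribution results, refined by Cornut--Vatsal to handle the whole tower uniformly in $\chi$ --- so for $n$ large these points meet every class supporting $\overline{\varphi}$, whence $\overline{\varphi}\equiv0$. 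This contradicts $\overline{\varphi}\neq0$, which holds because the absolute irreducibility of $\overline{\rho}_f$ (part of \textbf{(H.Im.)}) makes the localization of the quaternionic Hecke module at the maximal ideal cut out by $\varphi$ $p$-torsion-free, so that the integrally primitive $\varphi$ has non-zero reduction. The delicate point is that when $p\leq k$ the coefficient module $\mathrm{Sym}^{k-2}$ ceases to be irreducible modulo $p$ as a $\GL_2$-module, so the torsion-freeness argument cannot be run directly at $p$; this is exactly why one introduces an auxiliary prime $\ell>k-2$ with $\overline{\rho}_{f,\ell}$ absolutely irreducible --- one runs the torsion-freeness (multiplicity-one) argument modulo $\ell$, where $\mathrm{Sym}^{k-2}$ is well-behaved, and transfers the conclusion back to $p$.

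The principal obstacle lies in this third step: controlling the reduction of the quaternionic eigenform modulo $p$ when $p$ is small relative to the weight $k$ (resolved via the auxiliary prime $\ell>k-2$), together with securing the equidistribution of Gross points uniformly across the anticyclotomic tower (which rests on Ratner-theoretic input or its adelic Cornut--Vatsal refinement).
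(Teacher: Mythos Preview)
The paper does not prove this statement: it is quoted with attribution to \cite[Theorem~D]{chidahsiehcrelle} and invoked as a black box (it serves as input to Theorem~\ref{thm:explicitreciprocity}). There is therefore no argument in the paper to compare your proposal against; what you have written is an outline of the proof in the cited reference.

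As an outline of the Chida--Hsieh argument, the architecture is correct --- Jacquet--Langlands transfer to the definite quaternion algebra ramified at $N^-\infty$, Gross points and theta elements, an explicit Waldspurger-type central-value formula, and Cornut--Vatsal equidistribution of the Gross points --- but you have muddled the roles of the two primes. The step you describe as ``run the multiplicity-one argument modulo $\ell$ and transfer the conclusion back to $p$'' is not how the argument goes, and indeed no such transfer is available: mod-$\ell$ primitivity of $\varphi$ says nothing about its mod-$p$ reduction. Rather, in Chida--Hsieh the auxiliary prime $\ell$ (with $\ell>k-2$ and $\overline{\rho}_{f,\ell}$ absolutely irreducible) is the residue characteristic at which the \emph{entire} construction is carried out: the quaternionic eigenform $\varphi$ is normalized $\ell$-integrally, the theta elements are formed with $\ell$-adic coefficients, and the non-vanishing is established modulo $\ell$. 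The prime $p$ of the present paper enters only as the prime controlling the conductors of the ring class characters and the anticyclotomic tower. The output of the mod-$\ell$ non-vanishing is, via the Waldspurger formula, the non-vanishing of the \emph{complex} values $L(f,\chi,k/2)$ for all but finitely many $\chi$ of $p$-power conductor, which is the desired conclusion; no passage back to characteristic $p$ is required.
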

Note that the hypotheses on $N$ and on its factor $N^-$ ensure that the condition (Sign $+$) holds true. 

The second generic non-vanishing result is due to Hsieh~\cite[Theorem C]{hsiehnonvanishing}. We follow the notation we have set in the statement of Theorem~\ref{thm:chidahsieh}.

\begin{theorem}[Hsieh]
\label{thm:hsiehnonvanishing}
Suppose that $p\nmid N\rm{disc}(K/\QQ)$. Factor $N=N^+N^-$ where $N^+$ (resp., $N^-$) is only divisible by primes that are split (resp., inert or ramified) in $K$ and assume that $N^-$ is the square-free product of an even number of primes. Suppose also that there exists a prime $\ell \nmid pN\rm{disc}(K/\QQ)$ such that the \textup{mod} $\ell$ representation associated to $f$ is absolutely irreducible. Then for $\xi$ as in Proposition~\ref{prop:rootresults}(iii), the value  $L(f,\chi\xi,k/2)$ is non-zero for all but finitely many ring class characters $\chi$ with $p$-power conductor.
\end{theorem}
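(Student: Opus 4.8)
This is \cite[Theorem~C]{hsiehnonvanishing}; the strategy is as follows. The plan is to reduce the assertion to the non-vanishing of a toric period on a definite quaternion algebra, and then to establish that non-vanishing for all but finitely many $\chi$ by a reduction modulo $p$. First I would invoke a Waldspurger-type central value formula — the see-saw identity relating the Rankin--Selberg $L$-value to a toric period on a definite quaternion algebra, in the explicit form worked out by Hsieh — which expresses $L(f/K,\chi\xi,k/2)$, up to an explicit product of local terms and an archimedean period, as the square of a period integral
\[
P_{\chi\xi}(\varphi)\;=\;\int_{K^\times\backslash\mathbb{A}_K^{\times}}\varphi^{\mathrm{JL}}(t)\,(\chi\xi)(t)\,dt,
\]
where $\varphi^{\mathrm{JL}}$ is the Jacquet--Langlands transfer of $f$ to the quaternion algebra $B/\QQ$ ramified exactly at $\infty$ and at the finite set $S_{\mathrm{fin}}$ of (necessarily inert) places singled out in the proof of Proposition~\ref{prop:rootresults}. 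The hypothesis that $N^-$ is a square-free product of an even number of primes, combined with Proposition~\ref{prop:rootresults}(iii), forces $\epsilon(f\otimes\chi\xi/K)=+1$ and $\#S_{\mathrm{fin}}$ odd, so that $B$ is genuinely definite and admits an embedding of $K$; note also that $p\notin S_{\mathrm{fin}}$ since $p$ splits. After fixing local test vectors (a routine matter at the places of $S_{\mathrm{fin}}$, at the places dividing the conductor of $\chi\xi$, and at $\infty$), the period $P_{\chi\xi}(\varphi)$ becomes a finite sum over the class set of an Eichler order in $B$ of the values of a fixed integral model of $\varphi^{\mathrm{JL}}$ (with the appropriate coefficient module in weight $k$) twisted by $\chi\xi$, all of the discarded local and archimedean factors being nonzero and, under the running hypotheses and the splitting of $p$, $p$-adic units — the automorphic shadow of the interpolation formulae in Theorem~\ref{thm:sigma1interpolationformula} and its $\Sigma^{(2)}$-counterpart.

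Next I would package these periods into a \emph{theta element}: as $\chi$ ranges over ring class characters of $p$-power conductor, the values $P_{\chi\xi}(\varphi)$ are interpolated by a single element $\Theta_\xi$ of the completed group algebra $\oo[[\mathcal{G}]]$, where $\mathcal{G}$ is the Galois group of the $p$-power conductor ring class field tower of $K$, and the specialization of $\Theta_\xi$ at $\chi$ recovers $P_{\chi\xi}(\varphi)$ up to an explicit unit. Since a nonzero element of $\oo[[\mathcal{G}]]$ takes nonzero values at all but finitely many ring class characters of $p$-power conductor (Weierstrass preparation along the anticyclotomic $\ZZ_p$-extension $D_\infty/K$), the theorem reduces to the single assertion $\Theta_\xi\neq 0$; in fact I would aim for the sharper statement $\Theta_\xi\not\equiv 0\pmod{\mathfrak m_{\oo}}$, which moreover pins down the $\mu$-invariant.

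To prove $\overline{\Theta}_\xi\neq 0$ I would expand it as a twisted sum $\sum_{a}\overline{\varphi^{\mathrm{JL}}}(x_a)\,\overline{(\chi\xi)}(a)$ over Gross points $x_a$ indexed by a set carrying a simply transitive action of the relevant ring class group, and then combine two ingredients. The first is an equidistribution / non-degeneracy input — coming either from Cornut--Vatsal's theory (Chebotarev density together with Ratner-type rigidity at the non-split local components) or from Hsieh's more hands-on $p$-adic substitute — to the effect that the Gross points $x_a$ are spread out enough that no nonzero mod-$p$ function on the class set can be orthogonal to all the characters $\overline{\chi\xi}$ under consideration. The second is that the reduction $\overline{\varphi^{\mathrm{JL}}}$ is itself non-degenerate (not identically zero on the relevant orbit, and not of ``Eisenstein'' type): this is where the hypothesis that the mod-$\ell$ reduction of $\rho_f$ is absolutely irreducible for some auxiliary prime $\ell\nmid pN\,\mathrm{disc}(K/\QQ)$ enters — it supplies the primitivity input needed to normalize $\varphi^{\mathrm{JL}}$ $p$-integrally so that a suitable period value is a unit, ruling out residual degeneracies that would otherwise sabotage the averaging argument. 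Feeding these two ingredients together with a direct evaluation of the local zeta integrals at the primes dividing the conductor of $\chi\xi$ (powers of the split prime $p$, hence contributing only units — the automorphic counterpart of the absence of exceptional zeros) yields $\overline{\Theta}_\xi\neq 0$, and the theorem follows.

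The step I expect to be the main obstacle is precisely this last one: controlling the reduction of the toric period modulo $p$. The equidistribution statement is genuinely deep — whether one argues via ergodic theory or via a delicate $p$-adic Rankin--Selberg expansion — and marrying it to the mod-$p$ non-degeneracy of $\varphi^{\mathrm{JL}}$ forces a careful study of the quaternionic Hecke module near the residual representation, which is exactly where the auxiliary irreducibility hypothesis becomes indispensable. By contrast, the Waldspurger formula itself and the bookkeeping that checks that none of the archimedean or local Euler factors vanishes or introduces a denominator divisible by $p$ is technical but essentially routine once $p$ is split.
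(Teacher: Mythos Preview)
Your proposal is correct and matches the paper's treatment: the paper does not prove this statement at all but merely records it as \cite[Theorem~C]{hsiehnonvanishing}, exactly as you do in your opening line. Your subsequent sketch of Hsieh's strategy (Waldspurger-type formula on a definite quaternion algebra, theta element in the anticyclotomic Iwasawa algebra, mod-$p$ non-vanishing via equidistribution of Gross points) is extra exposition not present in the paper, but it is an accurate summary of the method in the cited reference.
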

We remark that the hypotheses on $N$ and its factor $N^-$ ensure that the condition (Sign $-$) holds true. 

\section{Iwasawa theory}
\label{sec:casepord}
We suppose in this portion of our work that the eigenform $f$ is $p$-ordinary. We introduce the Selmer structures we will consider as part of this work. 

For any character $\alpha$ of $\Delta$ as above, we set $T_{f,\alpha}:=T_f\otimes{\alpha^{-1}}$. Let $F^+T_{f,\alpha} \subset T_{f,\alpha}$ denote the $G_{\QQ_p}$-stable \emph{Greenberg subspace} of $T_{f,\alpha}$. Set $\mathbb{T}_{f,\alpha}:=T_{f,\alpha}\otimes\LL^\iota$ and $F^+\mathbb{T}_{f,\alpha}=F^+T_{f,\alpha}\otimes\LL^\iota$ (where we allow the Galois groups act diagonally). 
\subsection{Selmer structures and Selmer groups}
Let $X$ stand for any quotient of $\mathbb{T}_{f,\alpha}:=T_{f,\alpha}\otimes\LL^\iota$ (such as $T_f$ itself, $\mathbb{T}_{f,\alpha}^{\ac}=T_{f,\alpha}\otimes\LL_{\ac}^\iota$ and $\mathbb{T}_{f,\alpha}^\cyc=T_{f,\alpha}\otimes\LL_{\cyc}^\iota$) and define $F^+X$ as the appropriate quotient of  $F^+\mathbb{T}_{f,\alpha}$.

\begin{defn}
For any $X$ as above, we define the Greenberg Selmer structure $\mathcal{F}_{\textup{Gr}}$ by setting
$$H^1_{\mathcal{F}_{\textup{Gr}}}(K_\nu,X)=\textup{im}\left(H^1(K_\nu,F^+X)\longrightarrow H^1(K_\nu,X) \right)$$
for $\nu=\mathfrak{p}$ and $\mathfrak{p}^c$ and demanding unramified local conditions away from $p$. We define the Selmer structure $\mathcal{F}_{+}$ by relaxing the Greenberg local condition at $\mathfrak{p}$ (so that the local condition for $\FFF_+$ on $X$ is given by 
$$H^1_+(K_p,X):=H^1_{\FFF_{\textup{Gr}}}(K_{\mathfrak{p}^c},X)\oplus H^1(K_{\fp},X)\subset H^1(K_p,X)$$ and likewise, we define $\mathcal{F}_{+^c}$ (and the submodule $H^1_{+^c}(K_p,X)\subset H^1(K_p,X)$) by relaxing the Greenberg local condition at the prime $\mathfrak{p}^c$ above $p$.

For any abelian extension $L/K$, we also define $H^1_+(L_p,T_{f,\alpha}) \subset H^1(L_p,T_{f,\alpha})$ as the collection of classes which are Greenberg at all primes of $L$ above $\fp^c$ (and which are not required to verify any further condition at primes above $\fp$) and similarly the submodule $H^1_{+^c}(L_p,T_{f,\alpha}) \subset H^1(L_p,T_{f,\alpha})$.
\end{defn}
\begin{remark}
Although the object of main interest would be $T_{f}$ (and its various deformations), it turns out that there is no additional difficulty if we treat instead all its twists $T_{f,\alpha}$ by anticyclotomic characters $\alpha$ of finite order. However, it also turns out that we are only able to handle only a certain collection of these twists, namely by those anticyclotomic characters $\alpha$ for which $\alpha(\fp)\neq \alpha(\fp^c)$. Although one expects that this condition is ultimately unnecessary, we are forced to work under this hypothesis in this article for two reasons. The first is that our method takes the Beilinson-Flach element Euler system as an input, and the very construction of the Euler system classes (which we recall in Section~\ref{subsubsec:BFelementsandplocalproperties} below) requires this condition. The second reason is that the proof of the two-variable main conjecture given by X. Wan (whose results we record and use in Section~\ref{subsec:twovarmainconj}) is also given under this hypothesis.
\end{remark}
\begin{defn}
\label{def:singularquotients}
For those $X$ as in the previous definition, we set 
$$H^1_{+/\f}(K_p,X):=H^1_+(K_p,X)\big{/}H^1_{\FFF_{\textup{Gr}}}(K_p,X)\,$$
and define the map
$$\textup{res}_{+/\f}: H^1_{\FFF_+}(K,X) \longrightarrow H^1_{+/\f}(K_p,X)$$
as the compositum of the natural restriction to a decomposition group above $p$, followed by the projection map.
\end{defn}
\begin{defn}
Given $X$ as above, we define the Selmer structure $\mathcal{F}_{-}$ by requiring cohomology classes be zero at $\mathfrak{p}$ and Greenberg at $\mathfrak{p}^c$. More precisely, the local condition for $\FFF_-$ on $X$ is given by 
$$H^1_-(K_p,X):=H^1_{\FFF_{\textup{Gr}}}(K_\mathfrak{p},X)\oplus \{0\}\subset H^1(K_p,X) \,.$$
We set 
$$H^1_{\f/-}(K_p,X):=H^1_{\FFgr}(K_p,X)\big{/}H^1_{\FFF_{-}}(K_p,X)\stackrel{\sim}{\lra}H^1_{\FFgr}(K_\mathfrak{p^c},X) \,$$
and define the map
$$\textup{res}_{\f/-}: H^1_{\FFgr}(K,X) \longrightarrow H^1_{\f/-}(K_p,X)$$
as the compositum of the natural restriction to a decomposition group above $p$, followed by the projection map.
\end{defn}

Given a Selmer structure $\mathcal{F}$ on $X$, we will consider the dual Selmer structure $\FFF^*$ on $X^\vee(1):=\textup{Hom}(X,\bbmu_{p^\infty})$, by setting for every prime $v$ of $K$ $$H^1_{\FFF^*}(K_v,X^\vee(1)):=H^1_{\FFF}(K_v,X)^\perp$$ 
under local Tate duality. Given $\mathcal{F}$, we will also define the Selmer group $H^1_\FFF(K,X)$ and $H^1_{\FFF^*}(K,X^\vee(1))$ in the usual way. 

\subsection{CM Hida families}
\label{subsubsec:CMhidafamilies}
Our aim in this section is to provide a discussion of CM Hida families following the discussion in \cite[\S\S3-5]{LLZ2}. All references in this section are to this article.
\subsubsection{Set up}
For a general modulus $\frak{n}$ of $K$, we let $H_{\frak{n}}$ denote the ray class group modulo $\frak{n}$ and let $K(\frak{n})$ denote the maximal $p$-extension contained in the ray class field modulo $\frak{n}$. We set $H_{\frak{n}}^{(p)}:=\Gal(K(\frak{n})/K)$.  For each ideal $\fm$ divisible by $\frak{f}$, we let $\LL_{\frak{m}}^{(p)}=\calO_L[H_{\frak{m}}^{(p)}]$ (this ring is denoted by $\LL_{\frak{m}}^{\frak{P}}$ in op. cit.).

We recall from the introduction that we have fixed an integral ideal $\frak{f}$ of $\calO_K$ that is prime to $p$ with $K(\frak{f})=K$. We have  also fixed a $p$-distinguished ring class character $\alpha$  modulo $\frak{f}$ in the sense that $\alpha(\fp)\neq \alpha(\fp^c)$; we assume that the local field $L$ is large enough to realize the character $\alpha$. We caution readers that our field $L$ is denoted by $L_{\frak{P}}$ in \textit{op. cit.} and their $\alpha$ is different from ours. 
For any ideal $\frak{m}=\fn\ff$ divisible by $\frak{f}$, we will regard $\alpha$ as a character of $\varprojlim H_{\frak{m}\fp^r}$ via the surjection $\varprojlim H_{\frak{m}\fp^r} \twoheadrightarrow H_\frak{f}$. 
   
   Let $\psi_0$ denote the unique Hecke character of $\infty$-type $(-1,0)$, conductor $\fp$ and whose associated $p$-adic Galois character factors through $\Gamma_\fp$ and set $\psi=\alpha\psi_0$.  (It is unique since the ratio of two such characters would have finite $p$-power order and conductor dividing $\fp$. This has to be the trivial character because of our assumption on the class number.) The Hecke character $\psi$ is of $\infty$-type $(-1,0)$ with conductor $\ff\fp$. Then the theta-series $$\Theta(\psi):=\sum_{(\frak{a},\ff \fp)=1}\psi(\frak{a})q^{\NN\frak{a}}\in S_2(\Gamma_1(D_KN_\chi p),\epsilon_K\omega^{-1})$$
  is the weight two specialization (with trivial wild character) of the CM Hida family with tame level $D_KN_\chi$ and character $\epsilon_K\omega$. The weight one specialization of this CM Hida family with trivial wild character equals the $p$-ordinary theta-series $\Theta^{\ord}(\alpha):=\sum_{(\frak{a},\ff \fp)=1}\alpha(\frak{a})q^{\NN\frak{a}}\in S_{1}(\Gamma_1(D_KN_\chi p),\epsilon_K)$ of $\alpha$. We finally let $\psi_L:G_K\lra L^\times$ denote the Galois character associated to the $p$-adic avatar of $\psi$, which is obtained via the geometrically normalized Artin map.

 \begin{defn}
We let $\mathcal{N}$ denote the collection of square-free integral ideals of $\mathcal{O}_K$ which are prime to $\ff p$. In what follows we will tacitly denote the moduli which are divisible by $\mathfrak{f}$ but prime to $p$ by $\mathfrak{m}$, and those which are prime to $\mathfrak{f}p$ and square free by $\mathfrak{n}$.
\end{defn}
\subsubsection{Hecke algebras and group rings}
 Set $M:=D_K\NN\frak{m}$ and let ${\bf{T}}_{Mp}$ denote the Hecke algebra given as at the start of \cite[\S4.1]{LLZ2} and define the maximal ideal $\mathcal{I}_{\frak{mp}}$ of ${\bf{T}}_{Mp}$ as in Definition 5.1.1 of op.cit. We remark that in order to determine the map $\phi_{\frak{m}\fp}$ that appears in this definition, we use the algebraic Hecke character $\psi$ we have chosen above. It follows by Prop. 5.1.2 that $\mathcal{I}_{\frak{mp}}$ is non-Eisenstein, $p$-ordinary and $p$-distinguished. By Theorem~4.3.4 of loc. cit., the ideal $\mathcal{I}_{\frak{mp}}$ corresponds uniquely to a $p$-distinguished maximal ideal $\mathcal{I}$ of the universal ordinary Hecke algebra ${\bf{T}}_{Mp^\infty}$ acting on $H^1_{\ord}(Y_1(Mp^\infty))$ (definitions of these objects may be found in Definition 4.3.1 of loc. cit.). The said correspondence is induced from Ohta's control theorem~\cite[Theorem 1.5.7(iii)]{ohta99}, which also associates $\mathcal{I}_{\fm\fp}$ a unique non-Eisenstein, $p$-ordinary and $p$-distinguished maximal ideal $\mathcal{I}_{\fm\fp^r}$ of ${\bf{T}}_{Mp^r}$ for each $r\geq 1$ (which is easily seen to coincide with the kernel of the compositum of the arrows  ${\bf{T}}_{Mp^r}\stackrel{\phi_{\fm\fp^r}}{\lra}\calO_L[H_{\fm\fp^r}]\ra\calO_L\ra \calO_\varpi$, and therefore with its original form given in \cite[Definition 5.1.1]{LLZ2}).   The ideal $\mathcal{I}$ determines a CM Hida family that we shall denote by $\g_{\frak{m}}$, whose associated Galois representation $R_{\g_{\frak{m}}}^*$ is $H^1_{\ord}(Y_1(Mp^\infty))_{\mathcal{I}}$. When $\fm=\ff$, note that $\g_\ff$ is the CM Hida family of tame level $D_KN_\alpha$ and character $\epsilon_K\omega$ we have discussed above.
 
\subsubsection{Galois representations attached to CM branches of Hida families and patching}
  The discussion in the following paragraph is based on \cite[\S5.1]{LLZ2}. The maps $\phi_{\fm\fp^r}$ induce morphisms $({\bf{T}}_{Mp^r})_{\mathcal{I}_{\fm\fp^r}}\stackrel{\phi_{\fm\fp^r}}{\lra}\calO_L[H_{\fm\fp^r}^{(p)}]$, which are compatible as $r$ varies (thanks to the choice of the $p$-ordinary maximal ideals ${\mathcal{I}_{\fm\fp^r}}$) and gives rise in the limit to a map
 $$\phi_{\fm,\fp^\infty}:\,({\bf{T}}_{Mp^\infty})_{\mathcal{I}}\lra \varprojlim_r \calO_L[H_{\fm\fp^r}^{(p)}]=:\LL_{\fm\fp^\infty}\,.$$
 On the other hand, the $G_\QQ$-representations $H^1(\psi,\fm\fp^r)$ are defined by setting 
 $$H^1(\psi,\fm\fp^r):= H^1(Y_1(Mp^r))_{\mathcal{I}_{\fm\fp^r}}\,\otimes_{\phi_{\fm\fp^r}}\calO_L[H_{\fm\fp^r}^{(p)}]\,.$$
 These are denoted by $H^1(\psi,\frak{mp}^r,\frak{P})$ in \cite[Definition~5.2.2]{LLZ2} and free of rank $2$ over $\calO_L[H_{\fm\fp^r}^{(p)}]$ (c.f. Proposition 5.2.3 in \textit{op. cit.}). On passing to limit in $r$, we have 
 $$H^1(\psi,\fm\fp^\infty)=H^1_\ord(Y_1(M))_\mathcal{I}\otimes_{\phi_{\fm\fp^\infty}}\LL_{\fm\fp^\infty}$$ 
We define a family of morphisms of Galois modules 
$$\pi_{\frak{m}}: R_{\Bg_{\fm}}^*=H^1_\ord(Y_1(M))_\mathcal{I}\lra H^1(\psi,\fm\fp^\infty)$$  
given simply by $r\mapsto r\otimes 1$ and which are compatible as $\fm$ varies.
 
 \begin{remark}
It is assumed at the start of \cite[\S 5.1]{LLZ2} that $\psi$ is a an algebraic Hecke character whose conductor is prime to $p$. However, as we are assuming $p$ splits in $K$ and that $\alpha$ is $p$-distinguished, Remark 5.1.3 of \textit{op. cit.} still applies and tells us that the same results hold true in our setting.
\end{remark}

The following statement is a very slight extension\footnote{There is a typo in the statement of Corollary 5.2.6: In order to apply the previous results in \S5, the set of ideals considered should be the ones coprime to $\fp^c$, not $\fp$.} of Corollary 5.2.6 via Proposition 5.2.5 of \textit{op. cit.}: 
\begin{proposition}
\label{prop:LLZ2Cor526enhanced}
There exists a family of isomorphisms
$$\nu_{\frak{m},r}\,:\,H^1(\psi,\frak{mp}^r)\stackrel{\sim}{\lra} \textup{Ind}_{K(\frak{mp}^r)}^\QQ\,\calO_L(\psi^{-1}_L)$$
of  $\LL_{\frak{mp}^r}^{(p)}[[G_\QQ]]$-modules such that the diagram
$$\xymatrix{
H^1(\psi,\frak{m}^\prime\frak{p}^r)\ar[d]_{\mathcal{N}^{\frak{m}^\prime\fp^r}_{\frak{m}\fp^s}}\ar[r]^(.42){\nu_{\frak{m}^\prime,r}}_(0.42)\sim& \textup{Ind}_{K(\frak{m}^\prime\frak{p}^r)}^\QQ\,\calO_L(\psi^{-1}_L)\ar[d]\\
H^1(\psi,\frak{m}\frak{p}^s)\ar[r]^(.42){\nu_{\frak{m},s}}_(0.42)\sim& \textup{Ind}_{K(\frak{m}\frak{p}^s)}^\QQ\,\calO_L(\psi^{-1}_L)
}$$
commutes as $\frak{m}\mid \frak{m}^\prime$ range over integral ideals of $\calO_K$ that are divisible by $\frak{f}$ and coprime to $p$; and $r\geq s$ over non-negative integers. 
\end{proposition}
Here, there vertical map on the right is the obvious map induced from
$$\Gal(K(\frak{m}^\prime\fp^r)/K)\ra \Gal(K(\frak{m}\fp^s)/K)$$ 
and $\mathcal{N}^{\frak{m}^\prime\fp^r}_{\frak{m}\fp^s}$ is the norm map which is given (together with its fundamental property) in Proposition 5.2.5 of \textit{op. cit.} 

\begin{corollary}
\label{cor:nun}
If $\frak{m}$ is an ideal divisible by $\frak{f}$ and coprime to $p$, there exists a family of isomorphisms
$$\nu_{\frak{mp}^r}^{(j)}: H^1\left(\QQ, U_{f}^*(-j)\otimes H^1(\psi,\frak{mp}^r)\widehat\otimes\,\LL_\cyc^\iota\right)\stackrel{\sim}{\lra} H^1\left(K(\frak{mp}^r), U_{f}^*(-j)(\psi_L^{-1})\,\otimes\,\LL_{\fp^c}^\iota\right)$$
which are compatible as $\frak{m}$ and $r$ varies (in the sense that they induce a commutative diagram, analogous to that in Proposition~\ref{prop:LLZ2Cor526enhanced}). Here, $\LL_{\fp^c}:=\ZZ_p[[\Gamma_{\fp^c}]]$ and $\Gamma_{\fp^c}$ is the Galois group of the $\ZZ_p$-extension of $K$ unramified outside $\fp^c$.
\end{corollary}
\begin{proof}
This is a consequence of Shapiro's Lemma and Proposition~\ref{prop:LLZ2Cor526enhanced}, by passing to limit in $r$.
\end{proof}

We set $U_{f,\alpha}:=U_f\otimes\alpha$, so that $T_{f,\alpha}=U_{f,\alpha}^*(1-k/2)$. Let $W_{f,\alpha}=U_{f,\alpha}\otimes\QQ_p$.
\begin{corollary}
\label{cor:topropLLZ2Cor526enhanced}
Let $\fm$ be as in Corollary~\ref{cor:nun}.
There exists a family of morphisms
$$\mu_{\frak{m}}^{(j)}:\varprojlim_r H^1\left(\QQ, U_{f}^*(-j)\otimes H^1(\psi,\frak{mp}^r)\,\otimes\,\LL_\cyc^\iota\right){\lra}  H^1\left(K(\frak{m}), U_{f,\alpha}^*(-j)\,\otimes\,\LL^\iota\right)$$
which are compatible as $\frak{m}$ varies (in the evident sense).
\end{corollary}
\begin{proof}
The maps $\mu_\frak{m}^{(j)}$ are obtained by composing ${\nu}_{\fm}^{(j)}$ from Corollary~\ref{cor:nun} with the compositum of the arrows
\begin{align*}
\varprojlim_r H^1\left(K(\frak{mp}^r),  U_{f}^*(-j)(\psi_L^{-1})\,\otimes\,\LL_{\fp^c}^\iota\right) &\stackrel{\textup{cor}}{\lra}\varprojlim_r H^1\left(K(\frak{m})K({\fp}^r),U_{f}^*(-j)(\psi_L^{-1})\,\otimes\,\LL_{\fp^c}^\iota\right)\\
&\lra H^1\left(K(\frak{m}), U_{f}^*(-j)(\psi_L^{-1})\,\otimes\,\LL^\iota\right)
\\&\lra H^1\left(K(\frak{m}), U_{f,\alpha}^*(-j)\,\otimes\,\LL^\iota\right)\\
\end{align*}
where $\textup{cor}=\textup{cor}_{K(\frak{m})K({\fp}^r)}^{K(\frak{mp}^r)}$ is the corestriction map, the second arrow is deduced from Shapiro's lemma, the last arrow is induced from the fact that $\alpha^{-1}\psi_L$ factors through $\Gamma$ and their compositum have the desired compatibility as $\frak{m}$ varies since each of these arrows does.
\end{proof}
The family of morphisms $\mu_{\frak{m}}^{(j)}$ may be rewritten in a more compact form as
\begin{equation}
\label{eqn:compatiblemorphismsmufrakn}
\mu_{\frak{m}}^{(j)}: H^1\left(\QQ, U_{f}^*(-j)\otimes H^1(\psi,\frak{mp}^\infty)\,\otimes\,\LL^\iota_
\cyc\right){\lra}  H^1\left(K(\frak{m}), U_{f,\alpha}^*(-j)\,\otimes\,\LL^\iota\right)
\end{equation}
where we have set $H^1(\psi,\frak{mp}^\infty):=\varprojlim_r H^1(\psi,\frak{mp}^r)$.


\subsection{Beilinson-Flach elements and their $p$-local properties} 
\label{subsec:BFlocally}
Until the end of Section~\ref{subsec:rubinsformula}, the hypotheses \textup{\textbf{(H.Im.)}}, \textup{\textbf{(H.Dist.)}} and \textup{\textbf{(H.SS.)}} are in effect.
\subsubsection{Belinson-Flach elements and Coleman maps for Rankin-Selberg convolutions}
\label{subsubsec:BFelementsandplocalproperties}
Let $f_1$ and $f_2$ be two cuspidal new eigenforms of weights $k_1$ and $k_2$, levels $N_1$ and $N_2$, characters $\varepsilon_1$ and $\varepsilon_2$ respectively.  We assume that $p$ is coprime to $N_1N_2$ and that both $f_1$ and $f_2$ are ordinary at $p$. For integers $m\ge1$ and $c>1$ that is coprime to $6N_1N_2$, there exists a generalized Beilinson-Flach element
\[
_c\BF^{f_1,f_2}_{m,j}\in H^1(\QQ(\mu_{m}),W_{f_1}^*\otimes W_{f_2}^*(-j))
\]
as constructed in \cite{KLZ1}. Here, $W_{f_i}^*$ denotes the linear dual $\Hom(W_{f_i},\mathfrak{o}_L)$. These elements satisfy an Euler-system type norm relations as $m$ varies.

Let $\Bf_1$ and $\Bf_2$ be two Hida families of tame levels $N_1$ and $N_2$ respectively. We fix an integer $N$ that is divisible by $N_1$ and $N_2$. Let $\Lambda_{\Bf_1}$ and $\Lambda_{\Bf_2}$ be the localizations of the Hecke algebra $\mathbf{T}_{Np^\infty}$ at $\Bf_1$ and $\Bf_2$ respectively. We write $W_{\Bf_i}$  for the corresponding $\Lambda_{\Bf_i}$-adic representations. We may extend the definition of $F^\pm W_{f_i}$ to $F^\pm W_{\Bf_i}$. The Beilinson-Flach elements we described above deform to an element 
\[
_c\BF^{\Bf_1,\Bf_2}_{m}\in H^1(\QQ(\mu_{m}),W_{\Bf_1}^*\,\widehat\otimes\, W_{\Bf_2}^*\,\widehat\otimes\, \Lambda_\cyc(-\Bj)),
\]
where $\Bj$ is the canonical character $\Gamma^\cyc\rightarrow\Lambda_\cyc^\times$ (c.f. \cite[Definition~8.1.1]{KLZ2}).

As in Definition~8.2.1 in \textit{op. cit.}, we write 
\[
\DD(M)=\left(M\,\widehat\otimes\,_{\Zp}\widehat{\ZZ}_p^{\ur}\right)^{G_{\Qp}}
\]
for a unramified $p$-adically complete $\Zp[G_{\Qp}]$-module $M$. Then there is an injective morphism of $\Lambda_{\Bf_1}\,\widehat\otimes\,\Lambda_{\Bf_2}\,\widehat\otimes\,\Lambda_\cyc$-modules
\[
\calL:H^1(\Qp,F^-W_{\Bf_1}^*\,\widehat\otimes\, F^+W_{\Bf_2}^*\,\widehat\otimes\,\Lambda_\cyc(-\Bj))\rightarrow \DD(F^-W_{\Bf_1}^*\,\widehat\otimes\, F^+W_{\Bf_2}^*)\,\widehat\otimes\,\Lambda_\cyc
\]
given by Theorem~8.2.8 in \textit{op. cit.}.

We fix  $\Ba$ to be a new and non-Eisenstein branch of $\Bf_1$. Let $I_\Ba$ be the congruence ideal of Hida from \cite{hida88}. Let $\omega_{\Bf_1}$ and $\eta_\Ba$ be the two maps defined in Proposition~9.6.2 of \cite{KLZ2}. Then, we have the pairing
\[
\langle-,\eta_\Ba\otimes\omega_{\Bf_2}\rangle:\DD(F^-W_{\Bf_1}^*\,\widehat\otimes\, F^+W_{\Bf_2}^*)\,\widehat\otimes\, \Lambda_\cyc\ra(I_\Ba\,\widehat\otimes\,\Lambda_{\Bf_2}^{\textup{cusp}}\,\widehat\otimes\,\Lambda_\cyc)\otimes_{\Zp}\Zp[\mu_N]
\]
which is a $\Lambda_\Bf\,\widehat\otimes\,\Lambda_{\Bf_2}\,\widehat\otimes\,\Lambda_\cyc $-morphism. On combining this with $\calL$, we have the map
\begin{equation}\label{eq:colemanRS}
\langle\calL,\eta_\Ba\otimes\omega_{\Bf_2}\rangle:H^1(\Qp,F^-W_{\Bf_1}^*\,\widehat\otimes\, F^+W_{\Bf_2}^*\,\widehat\otimes\,\Lambda_\cyc(-\Bj))\rightarrow(I_\Ba\,\widehat\otimes\,\Lambda_{\Bf_2}^{\textup{cusp}}\,\widehat\otimes\,\Lambda_\cyc)\otimes_{\Zp}\Zp[\mu_N]
.
\end{equation}
From \cite[Theorem B]{KLZ2} (which is called the explicit reciprocity law), we have 
\begin{equation}\label{eq:reciprocity}
\langle\calL(\BF_1^{\Bf_1,\Bf_2}),\eta_\Ba\otimes\omega_{\Bf_2}\rangle=L_p(\Ba,\Bf_2,1+\Bj),
\end{equation}
where $L_p(\Ba,\Bf_2,1+\Bj)$ is the specialization of the $p$-adic $L$-function $L_p(\Bf_1,\Bf_2,1_\Bj)$ defined in \S\ref{S:padicL},  after dispensing  with the parameter $c$ as in \S10.3 of \textit{op. cit.}.

\subsubsection{Beilinson-Flach Euler system over $K$}
\label{subsubsec:ESoverK}
We are now ready to introduce the collection of Beilinson-Flach elements over the imaginary quadratic field $K$. For each modulus $\mathfrak{m}=\mathfrak{n}\ff$ with $\mathfrak{n}\in \mathcal{N}$, we set $\Bf_2$ to be the CM Hida family $\Bg_{\mathfrak{m}}$ we have introduced above.

 We let $\mathbf{f}_1$ denote the Hida family carrying the $p$-ordinary stabilization of $f$ as a weight-$k$ specialization. Specializing $\Bf_1$ at $f$ and making use of the isomorphism $\mu_{\fm}^{(j)}$ together with the projection $\pi_{\fm}$, Beilinson-Flach elements give rise to a class
\begin{align*}
\BF^{f}_{\mathfrak{n},j,\alpha}\in H^1(K(\mathfrak{n}),U_{f,\alpha}^*(-j)\otimes\LL^\iota)
\end{align*}
for each $\mathfrak{n}$ and $j$.  

\begin{proposition}
The Beilinson-Flach elements $\BF^{f}_{\mathfrak{n},j,\alpha}$ satisfy an Euler-system distribution relation as $\mathfrak{n}$ varies. More precisely, for every $\mathfrak{nl}\in \mathcal{N}$ we have
\begin{equation}
\label{eqn:BFdistrrelation}
\textup{cor}_{{K(\fl\mathfrak{n})}/{K(\mathfrak{n})}}\left(\BF^{f}_{\mathfrak{nl},j,\alpha}\right)=P_\fl([\fl])\cdot \BF^{f}_{\mathfrak{n},j,\alpha}\,.
\end{equation}
Here, $P_\fl(X)$ denotes the $\fl$-local Euler polynomial for $W_{f,\alpha}(1+j)$ and $[\fl]$ denotes the image of the class of  $\fl$ in $H_{\mathfrak{nf}}$ in $\LL^{\frak{o}}[\textup{Gal}(K(\fn)/K)]$, obtained via the surjection $H_{\mathfrak{nf}}\twoheadrightarrow \textup{Gal}(K(\fn)/K)$ induced by the geometrically normalized Artin map.

Furthermore, the image of $\BF^{f}_{\mathfrak{n},j,\alpha}$ under the localization map at ${\fp^c}$ falls inside the image of $H^1(K_{{\fp}^c},F^+U_{f,\alpha}^*(-j)\otimes\LL^\iota)$.
\end{proposition}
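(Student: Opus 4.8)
Both assertions will be deduced from the corresponding facts for the Rankin--Selberg Beilinson--Flach classes of \cite{KLZ1,KLZ2,LLZ2}: the plan is to take $\Bf_1$ to be the Hida family through $f$, to replace $\Bf_2$ by the CM family $\Bg_{\fm,\cP}$, and then to translate assertions over $\QQ$ into assertions over $K$ by unwinding Shapiro's lemma through the isomorphisms $\nu_{\fm}$ of Proposition~\ref{lem:CMbranchisinducedfromtheuniversalHeckechar} together with the projection $\mathfrak{s}_{\fm/\mathfrak{n}}$ of \eqref{eqn:usefulquotientofgrouprings}. The crucial input is precisely Proposition~\ref{lem:CMbranchisinducedfromtheuniversalHeckechar}: the maps $\nu_{\fm}$ intertwine the twisted degeneracy maps $\NN_{\fm}^{\fm\fl}$ of Definition~\ref{def:modifieddegeneracymap} with the natural norm maps $\Lambda^{\mathfrak{o}}(H_{\fm\fl\fp^\infty})_{\cP}^\iota\to\Lambda^{\mathfrak{o}}(H_{\fm\fp^\infty})_{\cP}^\iota$ on the induced modules, and these, under Shapiro's lemma, become the corestriction maps $\textup{cor}_{K(\fl\mathfrak{n})/K(\mathfrak{n})}$ in Galois cohomology over $K$.

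For the distribution relation \eqref{eqn:BFdistrrelation}, I would first recall from \cite[\S3 and \S5]{LLZ2} (which rests on the Euler-system norm relations of \cite{KLZ1}) that the Beilinson--Flach classes attached to $\Bf_1\times\Bg_{\fm,\cP}$, as the tame level of the CM factor is raised from $\fm$ to $\fm\fl$, are compatible under $\mathrm{id}\,\hat\otimes\,\NN_{\fm}^{\fm\fl}$ up to the Rankin--Selberg Euler factor at $\fl$ of the convolution. Transporting this through $\nu_{\fm}$, $\mathfrak{s}_{\fm/\mathfrak{n}}$ and Shapiro's lemma as above, and specializing $\Bf_1$ at $f$, produces an identity of the shape $\textup{cor}_{K(\fl\mathfrak{n})/K(\mathfrak{n})}\big(\BF^f_{\mathfrak{nl},\cP}\big)=(\text{Euler factor at }\fl)\cdot\BF^f_{\mathfrak{n},\cP}$. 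It then remains to identify the Euler factor: the local Euler factor at $\fl$ of the twisted restriction $W_f^*|_{G_K}\otimes\chi$, specialized at the tautological character of $\Lambda_{\mathfrak{n},\cP}$, equals $P_\fl([\fl])$ with $P_\fl$ the $\fl$-local Euler polynomial of $W_f(1)\otimes\alpha$, once one accounts for the effect of the involution twist $\iota$ (which exchanges $\mathrm{Fr}_\fl$ with $\mathrm{Fr}_\fl^{-1}$) and for the identity $W_f^*\cong W_f(k-1)$. The passage to the limit over the powers of $\fp$ occurring in the level is harmless, exactly as in Definition~\ref{def:modifieddegeneracymap}, since the relevant maps are already $\fp$-adically compatible.

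For the $p$-local assertion, the key input is the local behaviour of the Rankin--Selberg classes at $p$ proved in \cite{KLZ2}: the localization at $p$ of $\BF_1^{\Bf_1,\Bf_2}$ has trivial image in $H^1(\Qp,F^-W_{\Bf_1}^*\hat\otimes F^-W_{\Bf_2}^*\hat\otimes\Lambda_\cyc(-\Bj))$ --- the component in which both tensor factors are the unramified-at-$p$ quotient --- which is exactly what permits one to feed the class into the Coleman map $\calL$ of the explicit reciprocity law \eqref{eq:reciprocity}. Taking $\Bf_2=\Bg_{\fm,\cP}$: since this CM Hida family lives over the ray class algebra $\Lambda^{\mathfrak{o}}(H_{\fm\fp^\infty})$, which is ramified only at $\fp$, its unramified-at-$p$ quotient $F^-W_{\Bg_{\fm,\cP}}^*$ corresponds, under $\nu_{\fm}$ and the splitting $K\otimes_\QQ\Qp=K_\fp\times K_{\fp^c}$, to the direct summand coming from the conjugate prime $\fp^c$. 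Specializing $\Bf_1$ at $f$, composing with $\mathfrak{s}_{\fm/\mathfrak{n}}$, and applying Shapiro's lemma then turn the vanishing of this $(F^-,F^-)$-component into the statement that $\loc_{\fp^c}(\BF^f_{\mathfrak{n},\cP})$ has trivial image in $H^1(K_{\fp^c},(W_f^*/F^+W_f^*)\otimes\Lambda_{\mathfrak{n},\cP}^\iota)$, i.e.\ that it lies in the image of $H^1(K_{\fp^c},F^+W_f^*\otimes\Lambda_{\mathfrak{n},\cP}^\iota)$, where $F^+W_f^*\subset W_f^*$ is the Greenberg submodule fixed in \S\ref{sec:casepord}. (No condition is imposed, nor is any true, at the prime $\fp$ --- consistently with the fact that $\loc_\fp$ is what the first reciprocity law relates to the $p$-adic $L$-function.)

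The main obstacle I expect is the bookkeeping in this last step: one has to match the ordinary filtration of \cite{KLZ2} on the CM Hida family $\Bg_{\fm,\cP}$ against the decomposition of the induced Galois representation at the two primes above $p$ and verify that it is the $\fp^c$-factor --- and not the $\fp$-factor --- that inherits the Greenberg condition. This is precisely the point at which the choice of $\fp$ as the prime singled out by the fixed embedding $\iota_p$ and the hypothesis $\alpha(\fp)\neq\alpha(\fp^c)$, built into the very construction of $\Bg_{\fm,\cP}$ and of the classes $\BF^f_{\mathfrak{n},\cP}$, enter in an essential way. A lesser but still genuine technical nuisance is tracking the precise normalization of the Euler factor $P_\fl([\fl])$ in \eqref{eqn:BFdistrrelation} through the twisted degeneracy maps and the involution twist.
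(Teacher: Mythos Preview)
Your approach is essentially the paper's: reduce both assertions to the Rankin--Selberg statements in \cite{KLZ2,LLZ2}, and transport to $K$ via the isomorphisms $\nu_{\fm}$ of Proposition~\ref{lem:CMbranchisinducedfromtheuniversalHeckechar}, the projection $\mathfrak{s}_{\fm/\mathfrak{n}}$, and Shapiro's lemma. For the local assertion the paper simply invokes \cite[Proposition~8.1.7]{KLZ2}; your unpacking of the $(F^-,F^-)$-vanishing under the CM decomposition at $p$ is exactly what that citation encodes.

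There is one point you gloss over that the paper treats as the main content of its proof. Your appeal to ``\cite[\S3 and \S5]{LLZ2} (which rests on the Euler-system norm relations of \cite{KLZ1})'' does not suffice: the asymmetric norm relations in \cite{LLZ2} (Theorems~A.3.1 and A.4.1 there, feeding into Theorem~3.5.1) are stated only for the weight-$2$ zeta elements $_c\mathcal{Z}$ and $_c\Xi$. To handle higher weight one must first \emph{define} asymmetric Rankin--Iwasawa classes $_c\mathcal{RI}^{[j]}_{M,N,N',a}$ and asymmetric $\Lambda$-adic Beilinson--Flach classes $_c\mathcal{BF}^{[j]}_{M,N,N',a}$ as images of the symmetric higher-weight classes of \cite[\S5]{KLZ2} under degeneracy maps and sheaf morphisms $[d]_*$, and then verify that the proofs of \cite[Theorems~A.3.1 and A.4.1]{LLZ2} carry over formally, with the symmetric norm relation \cite[Theorem~5.5.1]{KLZ2} replacing its weight-$2$ analogue. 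The paper spells out the relevant commutative diagrams of modular curves and sheaves. Once this bridge is in place, your concluding step---feeding the asymmetric norm relation into Proposition~\ref{lem:CMbranchisinducedfromtheuniversalHeckechar} and Shapiro, exactly as in the proof of \cite[Theorem~3.5.1]{LLZ2}---is correct.
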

\begin{proof}
The asserted distribution relation (\ref{eqn:BFdistrrelation}) is analogous to \cite[Theorem~3.5.1]{LLZ2}. It follows once we formally incorporate the arguments of \cite{LLZ2} (that were used to prove Theorem 3.5.1 in \emph{op. cit.}; more precisely Lemma A.2.1, Theorem A.3.1 and Theorem A.4.1 therein) with the more general constructions of \cite{KLZ2}. For this reason and to keep the length of this article within reasonable length, we will only explain how various norm relations proved in \cite{KLZ2} map out with the ingredients (Lemma A.2.1 in \cite{LLZ2} and Theorems 3.1.1 and Theorem 3.3.1 in \cite{LLZ1}) that were utilized in order to prove \cite[Theorem 3.5.1]{LLZ2}. 

In this proof, we will very tacitly make use of the notation introduced in \cite[\S 5]{KLZ2} without providing their proper definitions here, but giving a precise reference to \emph{op. cit.} Let $_c\mathcal{RI}^{[j]}_{M,N,a}$ denote the Rankin-Iwasawa class defined as in Definition 5.1.5 of \textit{op. cit.} Following \cite[Definition~2.1.1]{LLZ2}, we may define the \emph{asymmetric Rankin-Iwasawa class} 
$$_c\mathcal{RI}^{[j]}_{M,N,N^\prime,a} \in H^3_{\textup{\'et}}\left(Y(M,N)\times Y(M,N^\prime),\LL(\mathscr{H}_{\ZZ_p}\langle t_N\rangle)^{[j]}\boxtimes \LL(\mathscr{H}_{\ZZ_p}\langle t_{N^\prime}\rangle)^{[j]}  (2-j)\right)$$ 
(for integers $N,N^\prime \geq 5$) as the image of $_c\mathcal{RI}^{[j]}_{M,R,a}$ (for some $R$ divisible by $N$ and $N^\prime$, having  the same prime factors as $N, N^\prime$) induced from the obvious degeneracy maps 
$$Y(M,R)^2 \lra Y(M,N) \times Y(M,N^\prime)$$
and morphism of sheaves given by
$$\xymatrix{\LL(\mathscr{H}_{\ZZ_p}\langle t_N\rangle)^{[j]}\boxtimes \LL(\mathscr{H}_{\ZZ_p}\langle t_{N^\prime}\rangle)^{[j]}\ar[rrr]^(.6){[R/N]_*\,\boxtimes\,\, [R/N^\prime]_*}&&&\LL(\mathscr{H}_{\ZZ_p}\langle t_R\rangle)^{[j,j]}}\,.$$ 
Here, the maps $[d]_*$ (for $d\in \ZZ$) are given as in \cite[Notation 5.1.4]{KLZ2}. It follows from \cite[Theorem 5.3.1]{KLZ2} that this element is independent of the choice of $R$. The asymmetric Rankin-Iwasawa classes $_c\mathcal{RI}^{[j]}_{M,N,N^\prime,a}$ will play the role of the asymmetric zeta elements denoted by $_c\mathcal{Z}(m,N, N^\prime,j)$ in \cite{LLZ2}. 

Likewise, we may define the asymmetric $\LL$-adic Beilinson-Flach elements
$$_c\mathcal{BF}^{[j]}_{M,N,N^\prime,a} \in H^3_{\textup{\'et}}\left(Y_1(N)\times Y_1(N^\prime)\times \mu_M^\textup{o}, \LL(\mathscr{H}_{\ZZ_p}\langle t_N\rangle)^{[j]}\boxtimes \LL(\mathscr{H}_{\ZZ_p}\langle t_{N^\prime}\rangle)^{[j]}  (2-j)\right)$$
for $N,N^\prime$ as above. The asymmetric $\LL$-adic Beilinson-Flach classes $_c\mathcal{BF}^{[j]}_{M,N,N^\prime,a}$ will play the role of the elements denoted by $_c{\Xi}(m,N, N^\prime,j)$ in \cite{LLZ2}.

Using the commutative diagrams  
$$\xymatrix{Y(M,\ell R)^2 \ar[r]\ar[d]_{\textup{pr}_1\times \,\textup{pr}_2}&Y(M,N)\times Y(M,\ell N^\prime)\ar[d]^{1\times \textup{pr}_2}\\
Y(M, R)^2\ar[r]& Y(M,N)\times Y(M,N^\prime)
}$$
and 
$$\xymatrix{ \LL(\mathscr{H}_{\ZZ_p}\langle t_{\ell R}\rangle)^{[j,j]} &&&\ar[lll]_(.57){[\ell R/N]_*\,\boxtimes\,\, [R/N^\prime]_*} \LL(\mathscr{H}_{\ZZ_p}\langle t_N\rangle)^{[j]}\boxtimes \LL(\mathscr{H}_{\ZZ_p}\langle t_{\ell{N}^\prime}\rangle)^{[j]}\\
\LL(\mathscr{H}_{\ZZ_p}\langle t_{R}\rangle)^{[j,j]} \ar[u]^{[\ell]_*\boxtimes\,[\ell]_*}&&&\ar[lll]_(.57){[R/N]_*\,\boxtimes\,\, [R/N^\prime]_*} \LL(\mathscr{H}_{\ZZ_p}\langle t_N\rangle)^{[j]}\boxtimes \LL(\mathscr{H}_{\ZZ_p}\langle t_{{N}^\prime}\rangle)^{[j]}\ar[u]_{1\boxtimes\,[\ell]_*}
}$$
(the first of which also appears on page 1618 of \cite{LLZ2}), the proof of \cite[Theorem A.3.1]{LLZ2} formally adapts to deduce the analogous norm relations (corresponding to the degeneracy maps $1\times \textup{pr}_1$ and $1\times \textup{pr}_2$) for the asymmetric Ranking-Iwasawa classes we introduced above, from those for the symmetric Rankin-Iwasawa classes (Theorem 5.5.1 in \cite{KLZ2}). One may now formally adapt the proof of \cite[Theorem A.4.1]{LLZ2} in order to deduce the corresponding norm relations for the $\Lambda$-adic Beilinson-Flach elements (which are also induced from the degeneracy maps $1\times \textup{pr}_1$ and $1\times \textup{pr}_2$) from those for the Rankin-Iwasawa classes obtained as above. 

We now conclude with the proof of the distribution relation for the collection $\{\BF^{f}_{\mathfrak{n},j,\alpha}\}_{\mathfrak{n}\in \mathcal{N}}$ using Corollary~\ref{cor:topropLLZ2Cor526enhanced} and arguing as in the proof of \cite[Theorem~3.5.1]{LLZ2}. Note that \cite[Theorem~3.5.1]{LLZ2} is a formal consequence of the norm relations (corresponding to the degeneracy maps $1\times \textup{pr}_1$ and $1\times \textup{pr}_2$) for the asymmetric $\Xi$'s of loc. cit. that were proved in Theorem A.4.1 of the same article; which we replace with the norm relation for the $\mathcal{BF}$'s we may obtain as above. 
 
 The second assertion concerning the local image of the Beilinson-Flach elements is an immediate consequence of \cite[Proposition~8.1.7]{KLZ2}.
 \end{proof}

We fix from now on a choice of $\alpha$ and  omit it from the notation. Given a finite  extension $K'$ over $K$ that is contained in side $K_\infty$, we write $\BF^{f}_{K',\mathfrak{n},j}$ for the image of  $\BF^{f}_{\mathfrak{n},j}$ in $H^1(K',U_{f,\alpha}^*(-j))$ under the natural projection map. When $\mathfrak{n}=1$, we write $\BF^{f}_{K',j}$ in place of $\BF^{f}_{K^\prime,1,j}$. We shall also write $\BF^{f}_{D_\infty,j}$ and $\BF^{f}_{K_\infty,j}$ for the inverse limit of the classes $\BF^{f}_{K',j}$ as $K'$ runs through all finite extensions of $K$ that are contained inside $D_\infty$ and $K_\infty$, respectively.

Let $\Ba$ denote the branch of $\Bf_1$ that passes through $f$. Note that this is non-Eisenstein in the sense of \cite[Definition~7.6.2]{KLZ2} because of the hypothesis \textbf{(H.Im.)}.  After enlarging $L$ if necessary, the map in \eqref{eq:colemanRS} gives us
\[
\col^{(1,\alpha)}:H^1(K_\fp,W_{f,\alpha}^*/F^+W_{f,\alpha}^*(1-k/2)\otimes\LL^{\iota})\rightarrow \Lambda_{L}.
\]
The first reciprocity law for Beilinson-Flach elements tells us that 
\begin{equation}
\label{eqn:firstreciprocitylaw}
\col^{(1,\alpha)}\left(\BF^{f}_{K_\infty,k/2-1}\right)=\mathfrak{L}_{f}^{(\alpha)}\,.
\end{equation} 
On the other hand, take $\Bf_1$ to be the Hida family of CM theta series $\Bg_{\mathfrak{m},\cP}$ above and $\Bf_2$ a Hida family that passes through $f$, then we may specialize $\langle\calL,\eta_\Ba\otimes\omega_{\Bf_2}\rangle$ to obtain another Coleman map 
\[
\col^{(2,\alpha)}:H^1(K_{\fp^c},F^+W_{f,\alpha}^*(1-k/2)\otimes\LL^{\iota})\ra I_{\Ba}\,\widehat\otimes\, \Lambda_\cyc\otimes_{\Zp}\Zp[\mu_N].
\]
 The second reciprocity law for the Beilinson-Flach elements reads 
\begin{equation}
\label{eqn:secondreciprocitylaw}
\col^{(2,\alpha)}\left(\BF^{f}_{K_\infty,k/2-1}\right)=\mathfrak{L}_{f,\Sigma^{(2)}}^{(\alpha)}\,.
\end{equation}

Suppose that furthermore that the  CM series  satisfies both \textbf{(H.Im.)} and \textbf{(H.Dist.)}. By \cite{HT}, there exists an anticyclotomic projection of Katz' $p$-adic $L$-function $L^{\textup{Katz}}_{\Ba}\in \RR_\Phi$ such that $I_\Ba^{-1}=(L^{\textup{Katz}}_{\Ba})$ (c.f., \cite[Definition~7.6]{xinwanwanrankinselberg}). We may therefore define 
\[
\widetilde{\col}^{(2,\alpha)}=L^{\textup{Katz}}_{\Ba}\times \col^{(2,\alpha)}.
\]
This image of this map now falls inside $\RR_\Phi$.  Furthermore, we  have
\begin{equation}\label{eq:WanLfunction}
\widetilde{\col}^{(2,\alpha)}(\BF_{K_\infty,1})=\calL_{f,\alpha}^{\textup{Hida}}, 
\end{equation}
where $\calL_{f,\alpha}^{\textup{Hida}}$ is Wan's $p$-adic $L$-function defined as in \textit{loc. cit.}

We note that we may scale these  maps by a power of a uniformizer of $\mathfrak o$ so that both maps takes values in $\Lambda^{\mathfrak{o}}$. Note also that the cokernel of both $\col^{(1)}$ and $\widetilde\col^{(2)}$ are pseudo-null  (see the discussion in the last paragraph before Theorem 10.6.4 of \cite{KLZ2}).

For $j=k/2-1$, we henceforth drop both $f$ and $j$ from the notation and write simply $\BF_{K',\mathfrak{m}}$, $\BF_{K'}$, $\BF_{K_\infty}$, $\BF_{D_\infty}$, $\BF_{K'}$ for the corresponding Beilinson-Flach elements for $T_{f,\alpha}$. 
\subsubsection{Non-triviality of the anticyclotomic Beilinson-Flach Euler system}
Factor $N=p^{\alpha_p}N^+N^-$ where $N^+$ (resp., $N^-$) is only divisible by primes that are split (resp., inert or ramified) in $K$ and \emph{assume until the end of Section~\ref{subsec:rubinsformula} that $N^-$ is square-free}. Suppose also that there exists a prime $\ell \nmid pN\rm{disc}(K/\QQ)$ with $\ell>k-2$ and such that the \textup{mod} $\ell$ representation associated to $f$ is absolutely irreducible. 

Note that (Sign $+$) is verified if and only if $N^-$ is a product of odd number of primes; otherwise  (Sign $-$) holds true. In either case, Theorem~\ref{thm:explicitreciprocity} below equips us with an Euler system for $T_{f,\alpha}$ (where $\alpha$ is the character we have fixed in the introduction) whose restriction to the anticyclotomic tower is non-trivial. This non-trivial Euler system is the main ingredient of our results later.

\begin{theorem}
\label{thm:explicitreciprocity}$\,$\\
\textup{\textbf{(i)}} For all $n\ge 0$, the class $\textup{BF}_{D_n}$ falls in the Greenberg Selmer group $H^1_\textup{Gr}(D_n,T_{f,\alpha})$ if $\epsilon(f/K)=-1$. \\\\
\textup{\textbf{(ii)}}
Suppose either
\begin{itemize}
\item[($+$)] $N^-$ is a product of odd number of primes, $p^2\nmid N$ and $(\rm{disc}(K/\QQ),N)=1$, or that
\item[($-$)] $N^-$ is a product of even number of primes, $p\nmid N\rm{disc}(K/\QQ)$.
\end{itemize}
Then the tower $\textup{BF}_{D_\infty} \in H^1(K,\mathbb{T}_{f,\alpha}^\ac)$ of Beilinson-Flach classes along the anticyclotomic $\ZZ_p$-extension is non-trivial.

\end{theorem}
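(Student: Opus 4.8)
The plan is to verify the three defining local conditions of the Greenberg Selmer structure for part (i), and to play the two explicit reciprocity laws against the generic non-vanishing results of \S\ref{subsec:rootnumbers} for part (ii). For \textbf{(i)}, recall that $T_{f,\alpha}=W_{f,\alpha}^*(1-k/2)$ with $F^+T_{f,\alpha}=F^+W_{f,\alpha}^*(1-k/2)$. The Greenberg condition at $\fp^c$ is automatic: by the last assertion of the Proposition preceding the statement, $\res_{\fp^c}(\BF^{f}_{\mathfrak{n},\mathcal{P}})$ already lies in the image of $H^1(K_{\fp^c},F^+W_{f}^*\otimes\Lambda_{\mathfrak{n},\mathcal{P}}^\iota)$, and $D_\infty/K$ is totally ramified at $\fp^c$, so projecting to $D_n$ gives the required condition at the unique prime above $\fp^c$. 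The unramified conditions at the places $v\nmid p$ of $D_n$ — all of which are unramified over $K$, since $D_\infty/K$ ramifies only above $p$ — follow from the geometric origin of the Beilinson--Flach classes together with the hypothesis that $N^-$ is square-free (cf.\ \cite{KLZ2,LLZ2}); I will not dwell on this routine verification. It then remains to check the condition at $\fp$, i.e.\ that the image of $\BF_{D_n}$ in $H^1(D_{n,\fp},T_{f,\alpha}/F^+T_{f,\alpha})$ vanishes; passing to the inverse limit over $n$, it suffices to show that the image of $\res_\fp(\BF_{D_\infty})$ in $H^1(K_\fp,(T_{f,\alpha}/F^+T_{f,\alpha})\otimes\LL_{\ac}^\iota)$ vanishes. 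By the first reciprocity law \eqref{eqn:firstreciprocitylaw} and compatibility of $\col^{(1,\alpha)}$ with anticyclotomic specialization, the map $\col^{(1,\alpha)}_\ac$ sends this image to $\mathfrak{L}_{f}^{(\alpha)}\bmod(\gamma_\cyc-1)=\al_{f,0}^{(\alpha)}$, which vanishes by Corollary~\ref{cor:cyclotomicderivative} because $\epsilon(f/K)=-1$; as $\col^{(1,\alpha)}_\ac$ is injective (the source being $\LL_{\ac}^{\ooo}$-torsion-free by \textbf{(H.Im.)}, and the cokernel pseudo-null), the image is $0$, which proves (i).

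For part \textbf{(ii)} in case $(+)$ we have $\epsilon(f/K)=+1$, and the same compatibility gives $\col^{(1,\alpha)}_\ac(\res_\fp\BF_{D_\infty})=\al_{f,0}^{(\alpha)}$, so it is enough to show $\al_{f,0}^{(\alpha)}\neq 0$. By Theorem~\ref{thm:sigma1interpolationformula}, evaluating $\al_{f,0}^{(\alpha)}$ at a ring class character $\chi$ of $p$-power conductor yields, up to a non-zero Archimedean constant and the Euler factor $\cE(f,\alpha\chi,k/2)/(\cE(f)\cE^*(f))$, the central critical value $L(f/K,\alpha\chi,k/2)$; the Euler factors are non-zero — automatically in weight $>2$ by valuation considerations, in weight $2$ because $p>5$ and the Ramanujan bound force $\lambda\neq\pm1$, while \textbf{(H.nEZ.)} together with the Ramanujan bound rules out a zero of $\cE(f,\alpha\chi,k/2)$. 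By Chida--Hsieh's Theorem~\ref{thm:chidahsieh}, whose hypotheses are guaranteed by $(+)$, these values are non-zero for all but finitely many such $\chi$, and since $\Gamma^\ac\cong\ZZ_p$ has infinitely many finite-order characters we get $\al_{f,0}^{(\alpha)}\neq 0$ in $\RR_L^\ac$. Hence $\res_\fp\BF_{D_\infty}\neq 0$, and in particular $\BF_{D_\infty}\neq 0$.

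Case $(-)$ ($\epsilon(f/K)=-1$) is the delicate one, since now $\al_{f,0}^{(\alpha)}=0$ and the argument above is vacuous; here I would use the second reciprocity law. By \eqref{eq:WanLfunction}, $\widetilde{\col}^{(2,\alpha)}(\res_{\fp^c}\BF_{K_\infty})=\calL_{f,\alpha}^{\textup{Hida}}\in\RR_\Phi$; reducing modulo $(\gamma_\cyc-1)$ and using that $\res_{\fp^c}\BF_{K_\infty}$ lands in $H^1(K_{\fp^c},F^+W_{f,\alpha}^*(1-k/2)\otimes\Lambda^\iota)$ with anticyclotomic image $\res_{\fp^c}\BF_{D_\infty}$, one obtains $\widetilde{\col}^{(2,\alpha)}_\ac(\res_{\fp^c}\BF_{D_\infty})=\calL_{f,\alpha}^{\textup{Hida}}|_{\ac}$. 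Writing $\calL_{f,\alpha}^{\textup{Hida}}=L^{\textup{Katz}}_{\Ba}\cdot\mathfrak{L}_{f,\Sigma^{(2)}}^{(\alpha)}$: the anticyclotomic restriction of $L^{\textup{Katz}}_{\Ba}$ is a non-zero element of the domain $\RR_\Phi^\ac$ (non-vanishing of anticyclotomic Katz $p$-adic $L$-functions, cf.\ \cite{HT}), while the anticyclotomic restriction of $\mathfrak{L}_{f,\Sigma^{(2)}}^{(\alpha)}$ — which, for a fixed central critical $\xi\in\Sigma^{(2)}_{\textup{cc}}$ of $p$-power conductor as in Proposition~\ref{prop:rootresults}(iii), interpolates the values $L(f,\chi\xi,k/2)$ up to non-vanishing Euler factors — is non-zero by Hsieh's Theorem~\ref{thm:hsiehnonvanishing}, whose hypotheses hold under $(-)$. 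Since $\RR_\Phi^\ac$ is an integral domain, $\calL_{f,\alpha}^{\textup{Hida}}|_{\ac}\neq 0$, whence $\res_{\fp^c}\BF_{D_\infty}\neq 0$ and, a fortiori, $\BF_{D_\infty}\neq 0$.

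The main obstacle is the bookkeeping in case $(-)$: one must pin down precisely the anticyclotomic specialization of $\mathfrak{L}_{f,\Sigma^{(2)}}^{(\alpha)}$ (equivalently of $\calL_{f,\alpha}^{\textup{Hida}}$) and carry the various Euler factors through, so that its non-zero interpolation values match exactly the central critical $L$-values covered by Hsieh's theorem; this forces one to unwind the normalizations in the definition of $L_p(f/K,\Sigma^{(2)})$ and in the constructions of \cite{KLZ2}. A secondary technical input, needed only for part (i), is the injectivity of the anticyclotomic Coleman map $\col^{(1,\alpha)}_\ac$, which follows from the pseudo-nullity of its cokernel together with the torsion-freeness of the relevant local Iwasawa cohomology group.
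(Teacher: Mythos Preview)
Your proposal is correct and follows essentially the same route as the paper's proof. For (i) the paper argues exactly as you do: the $\fp^c$-condition is already known, and the $\fp$-condition reduces via injectivity of $\col^{(1,\alpha)}$ and the first reciprocity law to the vanishing of $\al_{f,0}^{(\alpha)}$, which is Corollary~\ref{cor:cyclotomicderivative}; for (ii) the paper simply says the non-triviality ``follows similarly as a consequence of the explicit reciprocity law \ldots\ and the generic non-vanishing results in Theorems~\ref{thm:chidahsieh} and~\ref{thm:hsiehnonvanishing}'', which is precisely your argument in case $(+)$ via the first reciprocity law and in case $(-)$ via the second.

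The only minor difference is that in case $(-)$ you route through the Katz-renormalized map $\widetilde{\col}^{(2,\alpha)}$ and Wan's $\calL_{f,\alpha}^{\textup{Hida}}$, thereby incurring the extra step of checking that the anticyclotomic restriction of $L^{\textup{Katz}}_{\Ba}$ is non-zero. The paper's (implicit) argument uses $\col^{(2,\alpha)}$ and \eqref{eqn:secondreciprocitylaw} directly: Hsieh's theorem gives non-vanishing of enough interpolated values of $\mathfrak{L}_{f,\Sigma^{(2)}}^{(\alpha)}$ on the anticyclotomic line, so $\res_{\fp^c}\BF_{D_\infty}\neq 0$. Your variant is harmless but slightly less economical; either way the argument is the same in substance. (Your aside about the unramified conditions at $v\nmid p$ is fine but unnecessary over $\LL_{\ac}$-adic coefficients, which is why the paper omits it.)
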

\begin{proof}
We already know that the localization of $\BF_{D_\infty}$ at ${\fp^c}$ lies inside $H^1_\textup{Gr}(K_{{\fp^c}},\mathbb{T}_{f,\alpha}^\ac)$. Hence, $\BF_{D_\infty}$ is an element of $H^1_\textup{Gr}(K,\mathbb{T}_{f,\alpha}^\ac)$ if and only if its localization at $\fp$ is contained in $H^1_\textup{Gr}(K_\fp,\mathbb{T}_{f,\alpha}^\ac)$.
 
Note that $\col^{(1)}$ is injective by \cite[Proposition~7.6.2]{LLZ2} (see also \cite[\S3]{LVZ}) and we have an isomorphism
\[
H^1(K_{\fp},T_{f,\alpha}/F^+T_{f,\alpha}\otimes\LL^{\iota})\cong\frac{ H^1(K_{\fp},\TT_{f,\alpha})}{H^1_\textup{Gr}(K_{\fp},\TT_{f,\alpha})}\,.
\]
Thanks to the explicit reciprocity law, the localization of $\BF_{D_\infty}$ at $\fp$ lies inside $H^1_\textup{Gr}(K_\fp,\mathbb{T}_{f,\alpha}^\ac)$ if and only if the anticyclotomic projection $\mathfrak{L}_{f,0}^{(\alpha)}$ of $\mathfrak{L}_{f}^{(\alpha)}$ vanishes. The first part now follows from Corollary~\ref{cor:cyclotomicderivative}.

The second part follows similarly as a consequence of the explicit reciprocity law for the Beilinson-Flach elements and the generic non-vanishing results in Theorems~\ref{thm:chidahsieh} and \ref{thm:hsiehnonvanishing}.
\end{proof}

\subsection{Anticyclotomic Beilinson-Flach Euler system} 
\label{subsec:ESargument}
 We  set $\mathfrak{X}_?(f\otimes\alpha/D_\infty):=H^1_{\FFF^*_?}(K,\TT_{f,\alpha}^{\ac,\vee}(1))^\vee$ for $?=+,-,\textup{Gr}$. We retain throughout this section our hypothesis that $N^-$ is square-free. In addition to our running hypotheses  \textup{\textbf{(H.Im.)}}, \textup{\textbf{(H.Dist.)}} and \textup{\textbf{(H.SS.)}}, the hypothesis  \textup{\textbf{(H.nEZ.)}} is also in effect until the end of Section~\ref{subsec:rubinsformula}. We remind the reader that  \textup{\textbf{(H.nEZ.)}} is redundant unless \textbf{(wt-2)} holds true.

The following is a weak form of the ($\alpha$-isotypical) anticyclotomic main conjecture for the eigenform $f$ without $p$-adic $L$-functions.
\begin{theorem}
\label{thm:mainconjwithoutpadicLfunc}
Suppose either $N^-$ is a product of odd number of primes, $p^2\nmid N$ and $(\rm{disc}(K/\QQ),N)=1$, or else that $N^-$ is a product of even number of primes and $p\nmid N\rm{disc}(K/\QQ)$.
Then the $\LL_{\ac}^{\mathfrak{o}}$-module  $\mathfrak{X}_+(f\otimes\alpha/D_{\infty})$  is torsion and $H^1_{\FFF_+}(K,\TT_{f,\alpha}^{\ac})$ is of rank one. Furthermore, 
$$\Char\left(\mathfrak{X}_+(f\otimes\alpha/D_{\infty})\right) \Big{|}\, \Char\left(H^1_{\FFF_+}(K,\TT_{f,\alpha}^{\ac})\big{/}\LL_{\ac}^{\mathfrak{o}}\cdot\textup{BF}_{D_\infty}\right)\,.$$
\end{theorem}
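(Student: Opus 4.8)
The plan is to feed the anticyclotomic Beilinson--Flach Euler system $\{\BF^f_{\mathfrak{n}}\}$ into the locally restricted Euler system machinery recorded in \cite[Appendix A]{kbbleiPLMS} (compare \cite[\S12]{KLZ2}). The first step is to observe that $\BF_{D_\infty}$ lies in $H^1_{\FFF_+}(K,\TT_{f,\alpha}^{\ac})$: the Beilinson--Flach classes over $K$ are unramified outside $p$, the last assertion of the Proposition on Beilinson--Flach elements over $K$ (which rests on \cite[Proposition~8.1.7]{KLZ2}) shows that $\loc_{\fp^c}(\BF_{D_\infty})$ lands in the Greenberg subgroup $H^1_{\FFgr}(K_{\fp^c},\TT_{f,\alpha}^{\ac})$, and the $\FFF_+$-local condition at $\fp$ is by definition all of $H^1(K_\fp,-)$, hence imposes nothing. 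That $\BF_{D_\infty}$ is non-zero is precisely Theorem~\ref{thm:explicitreciprocity}(ii).

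Next I would compute the core rank of the Selmer structure $(\TT_{f,\alpha}^{\ac},\FFF_+)$ over $\LL_{\ac}^{\mathfrak{o}}$. Because $p$ splits in $K$ and $\TT_{f,\alpha}^{\ac}$ is free of rank two, $H^1(K_\fp,\TT_{f,\alpha}^{\ac})$ has $\LL_{\ac}^{\mathfrak{o}}$-rank two whereas the Greenberg subgroup (cut out by the rank-one submodule $F^+\TT_{f,\alpha}^{\ac}$) has rank one; running through the global Euler characteristic formula, and using that the global invariants $H^0(K,\TT_{f,\alpha}^{\ac})$ and $H^0(K,\TT_{f,\alpha}^{\ac,\vee}(1))^\vee$ vanish by \textbf{(H.Im.)}, one finds that $\FFF_+$ has core rank exactly one. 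Here \textbf{(H.nEZ.)} is what is needed: it guarantees that $H^0(K_\fp, F^-\TT_{f,\alpha}^{\ac,\vee}(1))$ and the relevant local $H^2$ vanish, so that no exceptional-zero term perturbs the local ranks; when $k\equiv 2\bmod 2(p-1)$ this vanishing can fail, which is the reason for the hypothesis.

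With $\BF_{D_\infty}$ in hand and the core rank computed, I would apply the Euler-system-to-Kolyvagin-system map. Here \textbf{(H.Im.)} together with the assumption $\alpha(\fp)\neq\alpha(\fp^c)$ guarantees that the residual representation $\overline{\TT}_{f,\alpha}^{\ac}$ has image over $K$ large enough for the machinery to run (abundance of Kolyvagin primes, vanishing of the relevant cohomology of $K(\TT_{f,\alpha}^{\ac})/K$, and so on). One thereby produces a non-trivial Kolyvagin system of rank one for $(\TT_{f,\alpha}^{\ac},\FFF_+)$ whose leading term is the image of $\BF_{D_\infty}$, and the structure theory of rank-one Kolyvagin systems over $\LL_{\ac}^{\mathfrak{o}}$ (as in \cite[Appendix A]{kbbleiPLMS}) then simultaneously yields: $H^1_{\FFF_+}(K,\TT_{f,\alpha}^{\ac})$ has $\LL_{\ac}^{\mathfrak{o}}$-rank one; its dual Selmer group $H^1_{\FFF_+^{*}}(K,\TT_{f,\alpha}^{\ac,\vee}(1))=\mathfrak{X}_+(f\otimes\alpha/D_\infty)^\vee$ is $\LL_{\ac}^{\mathfrak{o}}$-torsion; and $\Char\bigl(\mathfrak{X}_+(f\otimes\alpha/D_\infty)\bigr)$ divides $\Char\bigl(H^1_{\FFF_+}(K,\TT_{f,\alpha}^{\ac})\big/\LL_{\ac}^{\mathfrak{o}}\cdot\BF_{D_\infty}\bigr)$, which is the assertion.

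The main obstacle is the verification of the hypotheses demanded by the Euler system machine once one has passed to the anticyclotomic tower and twisted by $\alpha$. On the one hand there is the core-rank-one computation at $\fp$, where the delicate point is ruling out exceptional zeros --- precisely the role of \textbf{(H.nEZ.)}, and a point that is sensitive to the residue class of $k$ modulo $2(p-1)$. On the other hand there is the Galois-image input: one must confirm that $\overline{\rho}_f|_{G_K}\otimes\overline{\alpha}^{-1}$ still has image large enough for all the non-degeneracy conditions of the Kolyvagin system argument, where $\alpha(\fp)\neq\alpha(\fp^c)$ is essential (without it already the construction of the Euler system classes in Section~\ref{subsubsec:ESoverK} breaks down). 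A more routine but necessary check is that the Beilinson--Flach classes over $K$ genuinely obey the ``locally restricted at $p$'' axioms of the machine, which is where the Greenberg behaviour of $\loc_{\fp^c}(\BF_{D_\infty})$ and the injectivity of the Coleman map $\col^{(1,\alpha)}$ enter.
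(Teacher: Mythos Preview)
Your proposal is correct and follows essentially the same route as the paper: both feed the anticyclotomic Beilinson--Flach Euler system into the locally restricted Euler system machine of \cite[Appendix~A]{kbbleiPLMS} (and \cite[\S12]{KLZ2}), verify the requisite local and image hypotheses via \textbf{(H.nEZ.)} and \textbf{(H.Im.)}, and invoke Theorem~\ref{thm:explicitreciprocity}(ii) for non-triviality. One small point of presentation: the paper places the role of \textbf{(H.nEZ.)} at the prime $\fp^c$ (where the Greenberg restriction lives), using it to check that $H^1(K(\mathfrak{m})_{\fp^c},\TT^{\ac}_{f,\alpha})$ is free of rank two with the Greenberg submodule a rank-one direct summand and with surjective corestriction maps, whereas you phrase it as an exceptional-zero condition at $\fp$; the content is the same, but the restricted prime in $\FFF_+$ is $\fp^c$, so you should relocate that discussion accordingly. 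The paper also deduces the rank-one statement for $H^1_{\FFF_+}(K,\TT_{f,\alpha}^{\ac})$ from Poitou--Tate duality once torsion of $\mathfrak{X}_+$ is known, rather than extracting it directly from the Kolyvagin-system structure theorem, but this is a cosmetic difference.
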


\begin{proof}
All this is a consequence of the locally restricted Euler system machine, as discussed in \cite[Appendix A]{kbbleiPLMS} (see also \cite[\S 12]{KLZ2}). We give a brief outline here indicating how the results therein apply. 

Let $\mathfrak{m}, \mathfrak{m}^\prime$ be two moduli as in the second paragraph of Section~\ref{subsubsec:ESoverK} with the property that $\mathfrak{m}\mid\mathfrak{m}^\prime$.  Let $\Delta(\mathfrak{m})$ denote the corresponding ray class group; similarly define $\Delta(\mathfrak{m}^\prime)$.  We set $\LL_\ac^{\mathfrak{o}}(\mathfrak{m}):=\LL_\ac^{\mathfrak{o}} \otimes_{\ZZ_p}\ZZ_p[\Delta(\mathfrak{m})]$ and  we likewise define the ring $\LL_\ac^{\mathfrak{o}}(\mathfrak{m}^\prime)$. Under our running hypothesis \textup{\textbf{(H.nEZ.)}}, it follows using \cite[\S 2.1]{kbbCMYager} that
\begin{itemize}
\item the $\LL_\ac^{\mathfrak{o}}(\mathfrak{m})$-module $H^1(K(\mathfrak{m})_{\fp^c},\TT^\ac_{f,\alpha})$ is free of rank two,
\item the $\LL_\ac^{\mathfrak{o}}(\mathfrak{m})$-submodule 
\begin{align*}
H^1_{\textup{Gr}}(K(\mathfrak{m})_{\fp^c},\TT^\ac_{f,\alpha})&:=\textup{im}\left(H^1(K(\mathfrak{m})_{\fp^c},F^+\TT^\ac_{f,\alpha})\ra H^1(K(\mathfrak{m})_{\fp^c},\TT^\ac_{f,\alpha})\right)\\
&\subset H^1(K(\mathfrak{m})_{\fp^c},\TT^\ac_{f,\alpha})
\end{align*}
is a direct summand of rank one,
\item both corestriction maps 
$$H^1(K(\mathfrak{m}^\prime)_{\fp^c},\TT^\ac_{f,\alpha})\lra H^1(K(\mathfrak{m})_{\fp^c},\TT^\ac_{f,\alpha})$$
$$H^1_\Gr(K(\mathfrak{m}^\prime)_{\fp^c},\TT^\ac_{f,\alpha})\lra H^1_\Gr(K(\mathfrak{m})_{\fp^c},\TT^\ac_{f,\alpha})$$
are surjective.
\end{itemize}
The $L$-restricted Selmer structure $\FFF_L$ considered in \cite[Definition A.11]{kbbleiPLMS} corresponds to our $\FFF_+$. The $\fp^c$-local properties of the Beilinson-Flach elements (that they are Greenberg at $\fp^c$; which amounts to saying in the terminology of op.cit. that the Beilinson-Flach element Euler system is an $L$-restricted Euler system) used together with Theorem A.11 of op.cit. give rise to a Kolyvagin system for the Selmer structure  $\FFF_+$. The divisibility in the statement of our theorem follows as a consequence of this fact (relying on the the fact that the Selmer structure $\FFF_+$ is Cartesian; see also Theorem 12.3.4 and Corollary 12.3.5 of \cite{KLZ2}). If we additionally use Theorem~\ref{thm:explicitreciprocity}, the claim that $\mathfrak{X}_+(f\otimes\alpha/D_{\infty})$ is torsion also follows. The assertion that the $\LL_\ac^\mathfrak{o}$-module  $H^1_{\FFF_+}(K,\TT_{f,\alpha}^{\ac})$ is of rank one now follows from Poitou-Tate global duality.
\end{proof}
\begin{theorem}
\label{thm:mainconjtakeone}$\,$\\
\textup{\bf{(i)}} Suppose $N^-$ is a product of odd number of primes, $p^2\nmid N$ and $(\rm{disc}(K/\QQ),N)=1$ (so that the condition (\textup{Sign +}) holds true). Then the $\LL_{\ac}^{\mathfrak{o}}$-module  $\mathfrak{X}(f\otimes\alpha/D_\infty)$  is torsion and 
$$\Char\left(\mathfrak{X}(f\otimes\alpha/D_\infty)\right) \Big{|}\, \Char\left(H^1_{\plusf}(K,\TT_{f,\alpha}^{\ac})\Big{/}\LL_{\ac}^{\mathfrak{o}}\cdot\textup{res}_{+/\f}\,(\textup{BF}_{D_\infty})\right)\,.$$
Equality (resp., equality up to powers of $\varpi$) holds if and only if the divisibility in Theorem~\ref{thm:mainconjwithoutpadicLfunc} is an equality (resp., an equality up to powers of $\varpi$). 
\\\\
\textup{\bf{(ii)}} Suppose that $N^-$ is a product of even number of primes and $p\nmid N\rm{disc}(K/\QQ)$ (so that the condition (\textup{Sign }$-$) holds true). Then $H^1_{\FFF_-}(K,\TT_{f,\alpha}^{\ac})=0$ and all the three $\LL_{\ac}^{\mathfrak{o}}$-modules  $H^1_{\FFF_\Gr}(K,\TT_{f,\alpha}^{\ac})$, $\mathfrak{X}(f\otimes\alpha/D_\infty)$ and $\mathfrak{X}_-(f\otimes\alpha/D_\infty)$ are of rank one.
\end{theorem}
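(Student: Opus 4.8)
The plan is to deduce both parts from Theorem~\ref{thm:mainconjwithoutpadicLfunc} (which furnishes a divisibility together with the facts that $\mathfrak{X}_+(f\otimes\alpha/D_\infty)$ is $\LL_{\ac}^{\mathfrak{o}}$-torsion and that $H^1_{\FFF_+}(K,\TT_{f,\alpha}^{\ac})$ has rank one), from the $p$-local positions and non-triviality of $\BF_{D_\infty}$ recorded in Theorem~\ref{thm:explicitreciprocity}, and from the two reciprocity laws \eqref{eqn:firstreciprocitylaw} and \eqref{eqn:secondreciprocitylaw}, by chasing the five-term Poitou--Tate global duality sequences attached to the pairs $(\FFgr,\FFF_+)$ and $(\FFF_-,\FFgr)$ of Selmer structures --- each pair differing at a single prime above $p$. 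Two preliminary inputs will be used throughout: \textbf{(a)} $H^1(G_{K,S},\TT_{f,\alpha}^{\ac})$, hence every global Selmer group $H^1_{\FFF_?}(K,\TT_{f,\alpha}^{\ac})$, is a \emph{torsion-free} $\LL_{\ac}^{\mathfrak{o}}$-module, which follows in the usual way from the absolute irreducibility of $\overline{\rho}_f\big|_{G_K}$ guaranteed by \textbf{(H.Im.)}; and \textbf{(b)} thanks to \textbf{(H.nEZ.)} (via \cite[\S2.1]{kbbCMYager}) together with the injectivity of $\col^{(1,\alpha)}$ (cf.\ \cite[Proposition~7.6.2]{LLZ2}), the semilocal ``transition'' modules $H^1_{+/\f}(K_p,\TT_{f,\alpha}^{\ac})\cong H^1(K_\fp,\TT_{f,\alpha}^{\ac}/F^+\TT_{f,\alpha}^{\ac})$ (the module written $H^1_{\plusf}(K,\TT_{f,\alpha}^{\ac})$ in the statement) and $H^1_{\f/-}(K_p,\TT_{f,\alpha}^{\ac})\cong H^1_{\FFgr}(K_{\fp^c},\TT_{f,\alpha}^{\ac})$ are torsion-free $\LL_{\ac}^{\mathfrak{o}}$-modules of rank one. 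It is input \textbf{(a)} that will upgrade the ``rank zero'' outputs of the rank count below to genuine vanishing.

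For \textbf{(i)} (the definite case): arguing as in the proof of Theorem~\ref{thm:explicitreciprocity}(ii), case $(+)$, the first reciprocity law \eqref{eqn:firstreciprocitylaw} restricted to the anticyclotomic line gives $\col^{(1,\alpha)}\!\big(\textup{res}_{+/\f}(\BF_{D_\infty})\big)=\al_{f,0}^{(\alpha)}$, which is non-zero by Chida--Hsieh (Theorem~\ref{thm:chidahsieh}); since $\textup{res}_{+/\f}$ has kernel $H^1_{\FFgr}(K,\TT_{f,\alpha}^{\ac})$ and $\col^{(1,\alpha)}$ is injective, we get $\textup{res}_{+/\f}(\BF_{D_\infty})\ne 0$ and, using torsion-freeness of $H^1_{+/\f}(K_p,\TT_{f,\alpha}^{\ac})$, that $\LL_{\ac}^{\mathfrak{o}}\cdot\BF_{D_\infty}\cap H^1_{\FFgr}(K,\TT_{f,\alpha}^{\ac})=0$. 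As $H^1_{\FFF_+}(K,\TT_{f,\alpha}^{\ac})$ has rank one, $H^1_{\FFgr}(K,\TT_{f,\alpha}^{\ac})=\ker(\textup{res}_{+/\f})$ then embeds into the torsion module $H^1_{\FFF_+}(K,\TT_{f,\alpha}^{\ac})/\LL_{\ac}^{\mathfrak{o}}\cdot\BF_{D_\infty}$, so it is torsion, hence $0$ by input \textbf{(a)}. Consequently $\textup{res}_{+/\f}$ is injective on all of $H^1_{\FFF_+}(K,\TT_{f,\alpha}^{\ac})$ and the Poitou--Tate sequence for $(\FFgr,\FFF_+)$ collapses to
\[
0\to H^1_{\FFF_+}(K,\TT_{f,\alpha}^{\ac})\xrightarrow{\textup{res}_{+/\f}}H^1_{+/\f}(K_p,\TT_{f,\alpha}^{\ac})\to\mathfrak{X}(f\otimes\alpha/D_\infty)\to\mathfrak{X}_+(f\otimes\alpha/D_\infty)\to 0,
\]
whence $\mathfrak{X}(f\otimes\alpha/D_\infty)$ is torsion (the middle term has rank one and $\mathfrak{X}_+(f\otimes\alpha/D_\infty)$ is torsion). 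Taking characteristic ideals and using the isomorphism $\textup{res}_{+/\f}(H^1_{\FFF_+})/\LL_{\ac}^{\mathfrak{o}}\cdot\textup{res}_{+/\f}(\BF_{D_\infty})\cong H^1_{\FFF_+}(K,\TT_{f,\alpha}^{\ac})/\LL_{\ac}^{\mathfrak{o}}\cdot\BF_{D_\infty}$, the common factor $\Char\big(\textup{coker}\,\textup{res}_{+/\f}\big)$ cancels, and the asserted divisibility $\Char\big(\mathfrak{X}(f\otimes\alpha/D_\infty)\big)\mid\Char\big(H^1_{\plusf}(K,\TT_{f,\alpha}^{\ac})/\LL_{\ac}^{\mathfrak{o}}\cdot\textup{res}_{+/\f}(\BF_{D_\infty})\big)$ becomes \emph{equivalent} to that of Theorem~\ref{thm:mainconjwithoutpadicLfunc} --- with equality (resp.\ equality up to powers of $\varpi$) on the two sides simultaneously.

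For \textbf{(ii)} (the indefinite case): by Theorem~\ref{thm:explicitreciprocity}(i), $\BF_{D_\infty}\in H^1_{\FFgr}(K,\TT_{f,\alpha}^{\ac})$, and by Theorem~\ref{thm:explicitreciprocity}(ii) it is non-zero; since it lies in the rank-one module $H^1_{\FFF_+}(K,\TT_{f,\alpha}^{\ac})$, the module $H^1_{\FFgr}(K,\TT_{f,\alpha}^{\ac})$ has rank exactly one. As $\BF_{D_\infty}$ is Greenberg at $\fp$ we have $\textup{res}_{+/\f}(\BF_{D_\infty})=0$, so $\textup{res}_{+/\f}$ kills the rank-one submodule $\LL_{\ac}^{\mathfrak{o}}\cdot\BF_{D_\infty}$ of the rank-one module $H^1_{\FFF_+}(K,\TT_{f,\alpha}^{\ac})$; its image is thus torsion inside the torsion-free module $H^1_{+/\f}(K_p,\TT_{f,\alpha}^{\ac})$, hence zero, so $H^1_{\FFgr}(K,\TT_{f,\alpha}^{\ac})=H^1_{\FFF_+}(K,\TT_{f,\alpha}^{\ac})$ (rank one) and the Poitou--Tate sequence for $(\FFgr,\FFF_+)$ gives $0\to H^1_{+/\f}(K_p,\TT_{f,\alpha}^{\ac})\to\mathfrak{X}(f\otimes\alpha/D_\infty)\to\mathfrak{X}_+(f\otimes\alpha/D_\infty)\to 0$, so $\mathfrak{X}(f\otimes\alpha/D_\infty)$ has rank one. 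Next, arguing as in the proof of Theorem~\ref{thm:explicitreciprocity}(ii), case $(-)$, the second reciprocity law \eqref{eqn:secondreciprocitylaw} (equivalently \eqref{eq:WanLfunction}) together with Hsieh's non-vanishing theorem (Theorem~\ref{thm:hsiehnonvanishing}) shows $\textup{res}_{\f/-}(\BF_{D_\infty})\ne 0$; hence $\textup{res}_{\f/-}$ is non-zero on the rank-one module $H^1_{\FFgr}(K,\TT_{f,\alpha}^{\ac})$, so its kernel $H^1_{\FFF_-}(K,\TT_{f,\alpha}^{\ac})$ is torsion and therefore $0$ by input \textbf{(a)}. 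Finally, feeding $H^1_{\FFF_-}(K,\TT_{f,\alpha}^{\ac})=0$, $\rk H^1_{\FFgr}(K,\TT_{f,\alpha}^{\ac})=1$, $\rk H^1_{\f/-}(K_p,\TT_{f,\alpha}^{\ac})=1$ and $\rk\mathfrak{X}(f\otimes\alpha/D_\infty)=1$ into the Poitou--Tate sequence for $(\FFF_-,\FFgr)$,
\[
0\to H^1_{\FFF_-}(K,\TT_{f,\alpha}^{\ac})\to H^1_{\FFgr}(K,\TT_{f,\alpha}^{\ac})\xrightarrow{\textup{res}_{\f/-}}H^1_{\f/-}(K_p,\TT_{f,\alpha}^{\ac})\to\mathfrak{X}_-(f\otimes\alpha/D_\infty)\to\mathfrak{X}(f\otimes\alpha/D_\infty)\to 0,
\]
the alternating sum of ranks forces $\rk\mathfrak{X}_-(f\otimes\alpha/D_\infty)=1$, which completes the argument.

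The step that I expect to demand the most care is the verification of preliminary input \textbf{(b)} --- that the semilocal transition modules $H^1_{+/\f}(K_p,\TT_{f,\alpha}^{\ac})$ and $H^1_{\f/-}(K_p,\TT_{f,\alpha}^{\ac})$ are torsion-free of rank one. This is precisely where \textbf{(H.nEZ.)} (absence of an exceptional zero at $\fp^c$) and \textbf{(H.Dist.)} enter, through the structure of the semilocal Iwasawa cohomology at $p$ (as in \cite{kbbCMYager}) and the injectivity properties of the Coleman maps; and it is this torsion-freeness, combined with the non-vanishing of $\al_{f,0}^{(\alpha)}$ in the definite case (Chida--Hsieh) and of the $\Sigma^{(2)}$-$p$-adic $L$-function along the anticyclotomic line in the indefinite case (Hsieh), that turns the cheap rank-counting consequences of Poitou--Tate duality into the precise vanishing and torsion assertions of the theorem. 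Everything else is a formal diagram chase with characteristic ideals over the regular domain $\LL_{\ac}^{\mathfrak{o}}$.
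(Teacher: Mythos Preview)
Your argument is correct, and for part \textup{(i)} it coincides with the paper's: establish $\LL_{\ac}^{\mathfrak{o}}\cdot\BF_{D_\infty}\cap H^1_{\FFgr}(K,\TT_{f,\alpha}^{\ac})=0$ via the first reciprocity law and Chida--Hsieh, deduce $H^1_{\FFgr}(K,\TT_{f,\alpha}^{\ac})=0$ from torsion-freeness, and read everything else off the Poitou--Tate sequence for the pair $(\FFgr,\FFF_+)$.

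For part \textup{(ii)}, however, you take a genuinely shorter route than the paper for the assertion $\rk_{\LL_{\ac}^{\mathfrak{o}}}\mathfrak{X}(f\otimes\alpha/D_\infty)=1$. The paper argues indirectly: it picks an auxiliary finite-order character $\chi$ of $\Gamma^{\ac}$ avoiding finitely many primes, invokes Nekov\'a\v{r}'s control theorem to relate $H^1_{\FFgr}(K,\TT_{f,\alpha}^{\ac})/\pi_\chi$ to $H^1_{\FFgr}(K,T_{f,\alpha}(\chi^{-1}))$, and then appeals to the core Selmer rank formalism of Mazur--Rubin (specifically \cite[Corollary~5.2.6]{mr02}) to compute $\textup{corank}\,H^1_{\FFgr^*}(K,T_{f,\alpha}^*(\chi))$ from $\rk\,H^1_{\FFgr}(K,T_{f,\alpha}(\chi^{-1}))$; varying $\chi$ then gives the rank of $\mathfrak{X}(f\otimes\alpha/D_\infty)$. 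Your approach bypasses all of this: once you have shown $H^1_{\FFgr}(K,\TT_{f,\alpha}^{\ac})=H^1_{\FFF_+}(K,\TT_{f,\alpha}^{\ac})$ (using that $\res_{+/\f}$ has torsion image inside a torsion-free target), the Poitou--Tate sequence for $(\FFgr,\FFF_+)$ immediately gives a short exact sequence $0\to H^1_{+/\f}(K_p,\TT_{f,\alpha}^{\ac})\to\mathfrak{X}(f\otimes\alpha/D_\infty)\to\mathfrak{X}_+(f\otimes\alpha/D_\infty)\to 0$, and the rank count is trivial. This is cleaner and stays entirely within the $\LL_{\ac}^{\mathfrak{o}}$-adic setting, at the modest cost of needing the torsion-freeness input \textbf{(b)} (which is available under the running hypotheses, as you note). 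Your treatment of $H^1_{\FFF_-}(K,\TT_{f,\alpha}^{\ac})=0$ via the second reciprocity law and Hsieh's non-vanishing is also more explicit than the paper's, which states the conclusion somewhat tersely; the computation of $\rk\,\mathfrak{X}_-(f\otimes\alpha/D_\infty)$ is identical in both.
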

\begin{proof}
To prove (i), note that we have $\LL_\ac^\mathfrak{o}\cdot\textup{BF}_{D_\infty}\cap H^1_{\FFF_\Gr}(K,\TT_{f,\alpha}^\ac)=0$ by Theorem~\ref{thm:explicitreciprocity}. It follows from Theorem~\ref{thm:mainconjwithoutpadicLfunc}  that the $\LL_\ac^\mathfrak{o}$-module $H^1_{\FFF_\Gr}(K,\TT_{f,\alpha}^\ac)$ is torsion. Since we assumed \textup{\textbf{(H.Im.)}}, we conclude that $H^1_{\FFF_\Gr}(K,\TT_{f,\alpha}^\ac)=0$. Both claims in (i) now follow from Poitou-Tate global duality.
 
We now prove (ii). In this set up, Theorem~\ref{thm:explicitreciprocity} shows that the $\LL_{\ac}^{\mathfrak{o}}$-module $H^1_{\FFF_\Gr}(K,\TT_{f,\alpha}^{\ac})$ has positive rank. On the other hand, the containment 
$$H^1_{\FFF_\Gr}(K,\TT_{f,\alpha}^{\ac}) \subset H^1_{\FFF_+}(K,\TT_{f,\alpha}^{\ac})$$ 
together with Theorem~\ref{thm:mainconjwithoutpadicLfunc} shows that $H^1_{\FFF_\Gr}(K,\TT_{f,\alpha}^{\ac})$ is a $\LL_{\ac}^{\mathfrak{o}}$-module of rank one. Furthermore, noticing also that 
$$H^1_{\FFF_-}(K,\TT_{f,\alpha}^{\ac})=\ker\left(H^1_{\FFF_\Gr}(K,\TT_{f,\alpha}^{\ac})\stackrel{\res_{\textup{f}/-}}{\lra} H^1_{\FFF_{\Gr}}(K_\fp,\TT_{f,\alpha}^{\ac})\right)$$
and relying on the fact that $\LL_{\ac}^{\mathfrak{o}}$-module $H^1_{\FFF_{\Gr}}(K_\fp,\TT_{f,\alpha}^{\ac})$ has rank one, we conclude that $H^1_{\FFF_-}(K,\TT_{f,\alpha}^{\ac})=0$, as required. 

The remaining two assertions in (ii) are consequences of well-known global duality statements. Choose an auxiliary character $\chi:\Gamma^\ac\ra \ooo^\times$ (where $\ooo$ is finite flat over $\ZZ_p$) of finite order with the properties that 
\begin{itemize}
\item the prime $\pi_\chi:=\gamma_\ac-\chi(\gamma^{-1}_\ac)\in \LL_{\ac}^\ooo$ 
does not divide the characteristic ideal of the $\LL_{\ac}^\ooo$-module $\mathfrak{X}(f\otimes\alpha/D_\infty)_{\textup{tor}}^\iota \otimes\ooo$\,;
\item $\pi_\chi$ does not divide the characteristic ideal of $H^2(K_p,\TT_{f,\alpha}^\ac)$\,.
\end{itemize}
Note that almost all characters of $\Gamma^\ac$ of finite order satisfies this requirement. For a $\LL_{\ac}$-module $X$, write $X_\ooo$ in place the $\LL_{\ac}^\ooo$-module $X\otimes\LL_{\ac}^\ooo$. Comparing the classical Selmer groups with Nekov\'a\v{r}'s extended Selmer groups and using his control theorem \cite[Corollary 8.10.2]{nekovar06}, we infer that 
\begin{equation}\label{eqn:nekcontrol}
H^1_{\FFF_{\Gr}}(K,\TT_{f,\alpha}^{\ac})_\ooo/\pi_\chi\hookrightarrow H^1_{\FFF_{\Gr}}(K,T_{f,\alpha}(\chi^{-1}))\twoheadrightarrow \widetilde{H}^2_{\textup{f, Iw}}(D_\infty/K,T_{f,\alpha})_\ooo[\pi_\chi]
\end{equation}
where $\widetilde{H}^2_{\textup{f, Iw}}(D_\infty/K,T_{f,\alpha})$ is the degree two cohomology of an appropriately defined (by using the filtration $F^+V_{f,\alpha}\subset V_{f,\alpha}$ to define local conditions) Selmer complex.  The module $\widetilde{H}^2_{\textup{f, Iw}}(D_\infty/K,T_{f,\alpha})$ is related to $\mathfrak{X}(f\otimes\alpha/D_\infty)$ via the global duality statements~\cite[(8.9.6.2]{nekovar06} by the isomorphism 
$$\mathfrak{X}(f\otimes\alpha/D_\infty) \cong \widetilde{H}^2_{\textup{f, Iw}}(D_\infty/K,T_{f,\alpha})^\iota\,.$$
Our choice of $\chi$ shows that $\widetilde{H}^2_{\textup{f, Iw}}(D_\infty/K,T_{f,\alpha})_\ooo[\pi_\chi]$ is finite and we conclude using (\ref{eqn:nekcontrol}) that 
\begin{equation}\label{eqn:greenberggenericallyrankone}
\textup{rank}_\ooo H^1_{\FFF_{\Gr}}(K,T_{f,\alpha}(\chi^{-1})) = \textup{rank}_{\LL_{\ac}^\mathfrak{o}} H^1_{\FFF_{\Gr}}(K,\TT_{f,\alpha}^\ac)=1\,.
\end{equation}
Let us denote the Greenberg Selmer structure on $T_{f,\alpha}(\chi^{-1})$ (given by the filtration $F^+T_{f,\alpha}(\chi^{-1})\subset T_{f,\alpha}(\chi^{-1})$) by $\FFF_{\Gr,\chi}$. Note thanks to our choice of $\chi$ that $H^1_{\FFF_{\Gr}}(K_p,T_{f,\alpha}(\chi^{-1}))$ is contained in $H^1_{\FFF_{\Gr,\chi}}(K_p,T_{f,\alpha}(\chi^{-1}))$ with finite index and therefore, the Selmer group $H^1_{\FFF_{\Gr}}(K,T_{f,\alpha}(\chi^{-1}))$ is also contained in the Selmer group $H^1_{\FFF_{\Gr},\chi}(K_p,T_{f,\alpha}(\chi^{-1}))$ with finite index. Furthermore, it is easy to see that the core Selmer rank of the Selmer structure $\FFF_{\Gr,\chi}$ (in the sense of \cite{mr02}) on $T_{f,\alpha}(\chi^{-1})$ equals $0$. This in turn shows
\begin{align*}\textup{rank}_\ooo\left(\mathfrak{X}(f\otimes\alpha/D_\infty)\big{/}\pi_{\chi^{-1}}\mathfrak{X}(f\otimes\alpha/D_\infty)\right)&=\textup{corank}_\ooo \,H^1_{\FFF_{\Gr}^*}(K,T_{f,\alpha}^\vee(1+\chi))\\
&=\textup{corank}_\ooo \,H^1_{\FFF_{\Gr,\chi}^*}(K,T_{f,\alpha}^\vee(1+\chi))\\
&=\textup{rank}_\ooo \,H^1_{\FFF_{\Gr,\chi}}(K,T_{f,\alpha}(\chi^{-1}))=1
\end{align*}
where the first equality follows from a classical control theorem for Greenberg Selmer groups and $M(1+\chi)$ is a short hand for the twist $M(1)(\chi)$, the second thanks to our choice of the character $\chi$, the third using \cite[Corollary 5.2.6]{mr02} and the last equally from (\ref{eqn:greenberggenericallyrankone}). Allowing $\chi$ vary, we conclude that $\mathfrak{X}(f\otimes\alpha/D_\infty)$ is of rank one. 

The proof that $\mathfrak{X}_-(f\otimes\alpha/D_\infty)$ is free of rank one follows from the global duality sequence
\begin{align*}0\lra H^1_{\FFF_{-}}(K,\TT_{f,\alpha}^{\ac})\lra H^1_{\FFF_{\Gr}}(K,&\TT_{f,\alpha}^{\ac})\lra H^1_{\FFF_{\Gr}}(K_{\fp^c},\TT_{f,\alpha}^{\ac})\\
&\lra \mathfrak{X}_-(f\otimes\alpha/D_\infty)\lra \mathfrak{X}(f\otimes\alpha/D_\infty)\lra 0\,.
\end{align*}
and the previous portions of the theorem.
\end{proof}
\begin{proposition}
\label{prop:torsionsubmodulepofXminus}
Under the hypotheses of Theorem~\ref{thm:mainconjtakeone}(ii) we have 
$$\Char\left(\mathfrak{X}_+(f\otimes\alpha/D_\infty)\right) =\Char\left(\mathfrak{X}_-(f\otimes\alpha/D_\infty)_{\textup{tor}}\right)\, $$
up to a power of $\varpi$.
\end{proposition}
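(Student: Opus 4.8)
The plan is to compare the two modules $\mathfrak{X}_+(f\otimes\alpha/D_\infty)$ and $\mathfrak{X}_-(f\otimes\alpha/D_\infty)$ by means of the Poitou–Tate global duality exact sequences attached to the local conditions $\FFF_+$, $\FFF_-$ and $\FFF_\Gr$. Recall that $\FFF_\Gr$ is sandwiched between $\FFF_-$ (which imposes the vanishing condition at $\fp$) and $\FFF_+$ (which relaxes the condition at $\fp$ entirely), so that the singular quotients $H^1_{+/\f}(K_p,-)$ and $H^1_{\f/-}(K_p,-)$ are each free of rank one over $\LL_\ac^\mathfrak{o}$ (by the arguments reproduced in the proof of Theorem~\ref{thm:mainconjtakeone}, using \cite[\S2.1]{kbbCMYager}). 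First I would write down the four-term global duality sequences
\[
0\lra H^1_{\FFF_-}(K,\TT_{f,\alpha}^{\ac})\lra H^1_{\FFF_\Gr}(K,\TT_{f,\alpha}^{\ac})\lra H^1_{\f/-}(K_p,\TT_{f,\alpha}^{\ac})\lra \mathfrak{X}_-(f\otimes\alpha/D_\infty)\lra \mathfrak{X}(f\otimes\alpha/D_\infty)\lra 0
\]
and
\[
0\lra H^1_{\FFF_\Gr}(K,\TT_{f,\alpha}^{\ac})\lra H^1_{\FFF_+}(K,\TT_{f,\alpha}^{\ac})\lra H^1_{+/\f}(K_p,\TT_{f,\alpha}^{\ac})\lra \mathfrak{X}_+(f\otimes\alpha/D_\infty)\lra \mathfrak{X}(f\otimes\alpha/D_\infty)\lra 0\,.
\]

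Next I would feed in the rank computations from Theorem~\ref{thm:mainconjtakeone}(ii): under (Sign $-$) we have $H^1_{\FFF_-}(K,\TT_{f,\alpha}^{\ac})=0$, while $H^1_{\FFF_\Gr}(K,\TT_{f,\alpha}^{\ac})$, $H^1_{\FFF_+}(K,\TT_{f,\alpha}^{\ac})$, $\mathfrak{X}(f\otimes\alpha/D_\infty)$ and $\mathfrak{X}_-(f\otimes\alpha/D_\infty)$ are all of rank one, while $\mathfrak{X}_+(f\otimes\alpha/D_\infty)$ is torsion (Theorem~\ref{thm:mainconjwithoutpadicLfunc}). From the second sequence, since $H^1_{\FFF_\Gr}$ and $H^1_{\FFF_+}$ both have rank one and $H^1_{+/\f}(K_p,-)$ has rank one, the cokernel of $H^1_{\FFF_+}\to H^1_{+/\f}(K_p,-)$ has rank zero, so it injects (up to pseudo-null error) into $\mathfrak{X}_+$, and taking characteristic ideals along the sequence gives
\[
\Char\big(H^1_{+/\f}(K_p,\TT_{f,\alpha}^{\ac})\big/\,\image\big)\cdot\Char\big(\mathfrak{X}(f\otimes\alpha/D_\infty)\big)=\Char\big(\mathfrak{X}_+(f\otimes\alpha/D_\infty)\big)\cdot\Char\big(\mathrm{coker}\big)
\]
after taking $\iota$-twists and using multiplicativity of characteristic ideals in exact sequences up to powers of $\varpi$ (the pseudo-null ambiguity being absorbed into a power of $\varpi$ since $\LL_\ac^\mathfrak{o}$ has Krull dimension two and the pseudo-null submodules here are supported away from height-one primes other than $(\varpi)$ — more precisely one tracks that the relevant cohomology groups are torsion-free in codimension one). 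The analogous bookkeeping on the first sequence, using $H^1_{\FFF_-}=0$, expresses $\Char(\mathfrak{X}_-(f\otimes\alpha/D_\infty)_{\mathrm{tor}})$ in terms of $\Char(H^1_{\f/-}(K_p,\TT_{f,\alpha}^{\ac})/\,\res_{\f/-}(H^1_{\FFF_\Gr}))$ and $\Char(\mathfrak{X})$; the free rank-one quotient of $\mathfrak{X}_-$ maps isomorphically (up to torsion) onto the free part of $\mathfrak{X}$, so those free parts cancel and only the torsion characteristic ideals remain.

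The remaining step is to identify the two "regulator-type" cokernel ideals $\Char\big(H^1_{+/\f}(K_p,\TT_{f,\alpha}^{\ac})\big/\,\res_{+/\f}(H^1_{\FFF_+})\big)$ and $\Char\big(H^1_{\f/-}(K_p,\TT_{f,\alpha}^{\ac})\big/\,\res_{\f/-}(H^1_{\FFF_\Gr})\big)$, or rather to show that when one assembles all four sequences the difference between "$+$" and "$-$" collapses. Here the key input is that $H^1_{\f/-}(K_p,-)\xrightarrow{\sim} H^1_{\FFgr}(K_{\fp^c},-)$ and $H^1_{+/\f}(K_p,-)\xrightarrow{\sim} H^1(K_\fp,-)/H^1_{\FFgr}(K_\fp,-)$, and these two are interchanged under local Tate duality with the dual local conditions; combined with the fact that $H^1_{\FFF_\Gr}(K,\TT_{f,\alpha}^{\ac})$ has rank one generated (up to torsion and $\varpi$-powers) by $\BF_{D_\infty}$ once we invoke Theorem~\ref{thm:explicitreciprocity} and the first reciprocity law, this forces the two cokernels to have the same characteristic ideal up to $\varpi$. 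I expect this last identification — threading the $\BF_{D_\infty}$ class through both the $\fp$ and $\fp^c$ local conditions and matching the two singular quotients via global duality — to be the main obstacle; everything else is the standard "multiplicativity of characteristic ideals across a four-term exact sequence" bookkeeping, with the only subtlety being the careful tracking of $\varpi$-power and pseudo-null discrepancies, which is why the statement is only claimed up to a power of $\varpi$.
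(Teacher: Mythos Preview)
Your second Poitou--Tate sequence has the last two arrows reversed: since $\FFF_\Gr\subset\FFF_+$, the dual conditions satisfy $\FFF_+^*\subset\FFF_\Gr^*$, so the correct tail is $\mathfrak{X}(f\otimes\alpha/D_\infty)\to\mathfrak{X}_+(f\otimes\alpha/D_\infty)\to 0$, not the other way around (and indeed $\mathfrak{X}_+$ is torsion while $\mathfrak{X}$ has rank one, so a surjection $\mathfrak{X}_+\to\mathfrak{X}$ is impossible).

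Even with this corrected, there is a genuine gap at the step you yourself flag as the main obstacle. With the corrected sequence and the observation $H^1_{\FFF_\Gr}=H^1_{\FFF_+}$ (forced since their torsion quotient injects into the torsion-free $H^1_{+/\f}(K_p)$), you obtain $\Char(\mathfrak{X}_+)=\Char(\mathfrak{X}_{\textup{tor}})\cdot\mathfrak{c}$, where $\mathfrak{c}$ measures the position of the image of $H^1_{+/\f}(K_p)$ inside $\mathfrak{X}/\mathfrak{X}_{\textup{tor}}$; from the first sequence you get $\Char((\mathfrak{X}_-)_{\textup{tor}})=\mathfrak{l}_-\cdot\Char(\mathfrak{X}_{\textup{tor}})$. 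The proposition is thus equivalent to $\mathfrak{c}=\mathfrak{l}_-$ up to $\varpi$. But local Tate duality does \emph{not} interchange $\fp$ and $\fp^c$; it is complex conjugation that does, and that simultaneously sends $\alpha\mapsto\alpha^{-1}$ and twists by $\iota$, so it does not identify the two regulators for the \emph{same} $\alpha$. Moreover, in the (Sign $-$) situation one has $\res_{+/\f}(\BF_{D_\infty})=0$ by Theorem~\ref{thm:explicitreciprocity}(i), so invoking $\BF_{D_\infty}$ and the first reciprocity law yields no information about $H^1_{+/\f}(K_p)$.

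The paper's proof proceeds by an entirely different mechanism. Rather than comparing the two singular quotients directly, it specializes at anticyclotomic characters $\chi$ and observes that the Selmer structures $\FFF_{+,\chi}$ and $\FFF_{-,\chi}$ are Cartesian and that $\FFF_{+,\chi}^*$ and $\FFF_{-,\chi}$ (respectively $\FFF_{-,\chi}^*$ and $\FFF_{+,\chi}$) propagate to the same conditions on the residual representation $T_{f,\alpha}(\chi)/\varpi^m$. Mazur--Rubin's structure theorem \cite[Theorem~5.2.5]{mr02} then gives $H^1_{\FFF_{-,\chi}^*}(K,T_{f,\alpha}^\vee(1+\chi))^\vee\cong\ooo\oplus H^1_{\FFF_{+,\chi}^*}(K,T_{f,\alpha}^\vee(1+\chi))^\vee$, and control theorems show the difference of $\ooo$-lengths between $\mathfrak{X}_-\!_{\textup{tor}}/\pi_{\chi^{-1}}$ and $\mathfrak{X}_+/\pi_{\chi^{-1}}$ is bounded independently of $\chi$; the conclusion follows as in \cite[Lemma~1.2.6]{agboolahowardordinary}. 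This Mazur--Rubin input is precisely the ``functional-equation'' symmetry between the $+$ and $-$ conditions that your local-duality sketch is reaching for but does not supply.
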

\begin{proof}
The structure of the proof of this proposition is similar to the proof of the last portion of Theorem~\ref{thm:mainconjtakeone} and we follow the notation we have introduced therein (except that we no longer assume that $\chi$ has finite order). We define the Selmer structure $\FFF_{+,\chi}$ (resp., $\FFF_{-,\chi}$) on $T_{f,\alpha}(\chi^{-1})$ by relaxing $\FFF_{\Gr,\chi}$ we have introduced in the proof of Theorem~\ref{thm:mainconjtakeone} at $\fp$ (resp., by replacing the local conditions at $\fp$ by the strict conditions). 

Since we assumed when $k=2$ that $p\nmid N$ and $p>5$, we have $H^0(K_p,F^-\overline{T}_{f,\alpha})=0$ and in turn also that the module $H^1(K_p,F^-T_{f,\alpha}(\chi^{-1}))$ is torsion free. Thence, it follows from \cite[Lemma 3.7.1]{mr02} that both Selmer structures $\FFF_{+,\chi}$ and $\FFF_{-,\chi}$ are cartesian in the sense of \cite[Definition 1.1.4]{mr02}. This shows that the two local conditions on $T_{f,\alpha}^\vee(1+\chi)[ \varpi_\Phi^m]\cong T_{f,\alpha^c}(\chi)/\varpi^m_\Phi T_{f,\alpha^c}(\chi)$ propagated from the two Selmer structures $\FFF_{+,\chi}^*$ (resp., $\FFF_{-,\chi}^*$) and $\FFF_{-,\chi}$ (resp., $\FFF_{+,\chi}$) coincide for every positive integer $m$. Applying Lemma 3.5.3 and Theorem 5.2.5 of \cite{mr02}, we have
$$H^1_{\FFF_{-,\chi}^*}(K,T_{f,\alpha}^\vee(1+\chi))[\varpi^m_\Phi]\cong \oo/\varpi_\Phi^m\oplus H^1_{\FFF_{+,\chi}^*}(K,T_{f,\alpha}^\vee(1+\chi))[\varpi^m_\Phi]\,$$
in which passing to direct limit as $n\ra\infty$ and taking Pontryagin duals, we conclude that
\begin{equation}\label{eqn:cmopareminustoplus}
H^1_{\FFF_{-,\chi}^*}(K,T_{f,\alpha}^\vee(1+\chi))^\vee\cong \ooo\oplus H^1_{\FFF_{+,\chi}^*}(K,T_{f,\alpha}^\vee(1+\chi))^\vee\,.
\end{equation}
On the other hand, by a classical control theorem (applied with the two Selmer structures $\FFF_\pm$) we have a surjection
$$\mathfrak{X}_{\pm}(f\otimes\alpha/D_{\infty})\big{/}\pi_{\chi^{-1}}\mathfrak{X}_{\pm}(f\otimes\alpha/D_{\infty})\twoheadrightarrow H^1_{\FFF_{\pm,\chi}^*}(K,T_{f,\alpha}^\vee(1+\chi))^\vee$$
which has finite kernel whose order is bounded independently of $\chi$ as $\chi$ ranges on our chosen set of anticyclotomic characters. This combined with (\ref{eqn:cmopareminustoplus}) shows that
$$\textup{length}_\ooo\,\left( \mathfrak{X}_{-}(f\otimes\alpha/D_{\infty})_{\textup{tor}}\Big{/}\pi_{\chi^{-1}}\right)-\textup{length}_\ooo\,\left(\mathfrak{X}_+(f\otimes\alpha/D_\infty)\Big{/}\pi_{\chi^{-1}}\right) $$
is bounded independently of $\chi$. It is not hard to see that our proposition follows from this fact, c.f. the second portion of the proof of \cite[Lemma 1.2.6]{agboolahowardordinary}.
\end{proof}
\begin{defn}
\label{def:minusregulator}
In the situation of Theorem~\ref{thm:mainconjtakeone}(ii), we define the ideal
\begin{align*}\mathfrak{l}_-&:=\Char\left(H^1_{\f/-}(K_p,\TT^\ac_{f,\alpha})\Big{/}\res_{\f/-}\left(H^1_{\FFgr}(K,\TT_{f,\alpha}^\ac)\right)\right) \\
&=\Char\left(H^1_{\FFgr}(K_{\fp^c},\TT_{f,\alpha}^\ac)\Big{/}\res_{\fp^c}\left(H^1_{\FFgr}(K,\TT_{f,\alpha}^\ac)\right)\right) 
\end{align*}
where we recall that $\res_{\f/-}: H^1_{\FFgr}(K,\TT^\ac_{f,\alpha})\ra H^1_{\f/-}(K_p,\TT^\ac_{f,\alpha})$ is the natural map (which we may identify with the map $\res_{\fp^c}: H^1_{\FFgr}(K,\TT^\ac_{f,\alpha})\ra H^1_{\FFgr}(K_{\fp^c},\TT^\ac_{f,\alpha})$).
\end{defn}
\begin{lemma}
\label{lem:PTdualityandtorsionsubmodules}
Under the hypotheses of Theorem~\ref{thm:mainconjtakeone} we have
$$\Char\left(\mathfrak{X}_-(f\otimes\alpha/D_\infty)_{\textup{tor}}\right)=\Char\left(\mathfrak{X}(f\otimes\alpha/D_\infty)_{\textup{tor}}\right)\cdot \mathfrak{l}_-\,.$$
\end{lemma}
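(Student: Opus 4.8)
The plan is to read off the identity directly from the global duality exact sequence already recorded in the proof of Theorem~\ref{thm:mainconjtakeone}(ii). Under the hypotheses of Theorem~\ref{thm:mainconjtakeone} one has $H^1_{\FFF_{-}}(K,\TT_{f,\alpha}^{\ac})=0$, so that five-term sequence collapses to
\[
0\lra H^1_{\FFF_{\Gr}}(K,\TT_{f,\alpha}^{\ac})\stackrel{\res_{\fp^c}}{\lra} H^1_{\FFF_{\Gr}}(K_{\fp^c},\TT_{f,\alpha}^{\ac})\lra \mathfrak{X}_-(f\otimes\alpha/D_\infty)\lra \mathfrak{X}(f\otimes\alpha/D_\infty)\lra 0\,.
\]
Write $Q$ for the cokernel of $\res_{\fp^c}$, so that $\Char(Q)=\mathfrak{l}_-$ by Definition~\ref{def:minusregulator}. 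First I would check that $Q$ is a torsion $\LL_{\ac}^{\mathfrak{o}}$-module, equivalently that $\mathfrak{l}_-$ is a nonzero ideal: the source $H^1_{\FFF_{\Gr}}(K,\TT_{f,\alpha}^{\ac})$ has rank one by Theorem~\ref{thm:mainconjtakeone}(ii), the target $H^1_{\FFF_{\Gr}}(K_{\fp^c},\TT_{f,\alpha}^{\ac})$ has rank one by the local Euler characteristic formula (the same input used in the proof of Theorem~\ref{thm:mainconjtakeone}(ii) for the prime $\fp$), and $\res_{\fp^c}$ is injective, so the cokernel has rank zero.

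Next, exactness of the displayed sequence identifies $Q$ with the kernel of the surjection $\mathfrak{X}_-(f\otimes\alpha/D_\infty)\twoheadrightarrow \mathfrak{X}(f\otimes\alpha/D_\infty)$, giving a short exact sequence
\[
0\lra Q\lra \mathfrak{X}_-(f\otimes\alpha/D_\infty)\lra \mathfrak{X}(f\otimes\alpha/D_\infty)\lra 0\,.
\]
I would then pass to torsion submodules: $Q$ being torsion it lies in $\mathfrak{X}_-(f\otimes\alpha/D_\infty)_{\textup{tor}}$, and since $\LL_{\ac}^{\mathfrak{o}}$ is a domain any element of $\mathfrak{X}_-(f\otimes\alpha/D_\infty)$ mapping into $\mathfrak{X}(f\otimes\alpha/D_\infty)_{\textup{tor}}$ is itself torsion; a short diagram chase then identifies $\mathfrak{X}_-(f\otimes\alpha/D_\infty)_{\textup{tor}}/Q$ with $\mathfrak{X}(f\otimes\alpha/D_\infty)_{\textup{tor}}$, producing a short exact sequence
\[
0\lra Q\lra \mathfrak{X}_-(f\otimes\alpha/D_\infty)_{\textup{tor}}\lra \mathfrak{X}(f\otimes\alpha/D_\infty)_{\textup{tor}}\lra 0
\]
of finitely generated torsion $\LL_{\ac}^{\mathfrak{o}}$-modules.

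To finish, I would use that $\LL_{\ac}^{\mathfrak{o}}\cong\mathfrak{o}[[\Gamma^{\ac}]]$ is a two-dimensional regular local (hence factorial) domain, over which characteristic ideals of finitely generated torsion modules are multiplicative along short exact sequences; applied to the last sequence this gives $\Char(\mathfrak{X}_-(f\otimes\alpha/D_\infty)_{\textup{tor}})=\Char(Q)\cdot\Char(\mathfrak{X}(f\otimes\alpha/D_\infty)_{\textup{tor}})$, which is exactly the assertion since $\Char(Q)=\mathfrak{l}_-$. I do not expect a serious obstacle here: the substance of the lemma lies entirely in the global duality sequence, which is already in place, and the only steps demanding care are the verification that $\res_{\fp^c}$ has torsion cokernel (so that $\mathfrak{l}_-$ is a genuine nonzero ideal and the final sequence is one of torsion modules) and the elementary homological bookkeeping needed to descend from the sequence of full Selmer groups to the sequence of their torsion submodules.
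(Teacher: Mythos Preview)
Your proposal is correct and follows exactly the paper's approach: the paper's proof simply says ``this follows by taking $\LL_{\ac}^{\mathfrak{o}}$-torsion on the global duality sequence,'' and you have faithfully unpacked what that entails (verifying $Q=\operatorname{coker}(\res_{\fp^c})$ is torsion, passing to torsion submodules via the observation that any preimage of a torsion element is torsion since $Q$ is, and invoking multiplicativity of characteristic ideals). The only cosmetic difference is that the paper writes the middle term as $H^1_{\f/-}(K_p,\TT^\ac_{f,\alpha})$ while you use the isomorphic $H^1_{\FFF_{\Gr}}(K_{\fp^c},\TT_{f,\alpha}^{\ac})$, an identification already made in Definition~\ref{def:minusregulator}.
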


\begin{proof}
This follows by taking $\LL_{\ac}^{\mathfrak{o}}$-torsion on the global duality sequence 
$$0\ra H^1_{\FFgr}(K,\TT^\ac_{f,\alpha})\ra H^1_{\f/-}(K_p,\TT^\ac_{f,\alpha})\ra\mathfrak{X}_-(f\otimes\alpha/D_\infty)\ra \mathfrak{X}(f\otimes\alpha/D_\infty)\ra0\,,$$
where the exactness on the left follows from Theorem~\ref{thm:mainconjtakeone}(ii).
\end{proof}
\subsection{Two-variable main conjecture and upgrading divisibilities}
\label{subsec:twovarmainconj}
Our goal in this section is to prove that the divisibility in Theorem~\ref{thm:mainconjwithoutpadicLfunc} may be improved to an equality up to powers of $\varpi$, by making use of the main results of \cite{skinnerurbanmainconj, xinwanwanrankinselberg} whenever they apply.

We first state the $2$-variable version of Theorem~\ref{thm:mainconjwithoutpadicLfunc}. Recall the $\LL$-modules $\mathfrak{X}_+(f\otimes\alpha/K_\infty)$ and $\mathfrak{X}_{\Gr}(f\otimes\alpha/K_\infty)$ which are defined as the Pontryagin duals of the two Selmer groups $H^1_{\FFF_+^*}(K,\TT_{f,\alpha}^\vee(1))$ and $H^1_{\FFF_\Gr^*}(K,\TT_{f,\alpha}^\vee(1))$. In this portion of our article, we will consider another Selmer structure $\FFF_{+-}$, which is obtained from $\FFF_+$ by altering local conditions only at $\fp^c$ and imposing strict local conditions (in place of Greenberg local conditions). We shall write $\mathfrak{X}_{+-}(f\otimes\alpha/K_\infty)$ for the Pontryagin dual of the dual Selmer group $H^1_{\FFF_{+-}^*}(K,\TT_{f,\alpha}^\vee(1))$. We also recall  the $p$-adic $L$-functions $\mathfrak{L}_f^{(\alpha)}$ and $\mathfrak{L}_{p,\Sigma^{(2)}}^{(\alpha)}$. 
\begin{theorem}
\label{thm:2varmainconjwithoutpadicL}
\textup{\textbf{(i)}}
The $\LL^{\mathfrak{o}}$-module  $\mathfrak{X}_+(f\otimes\alpha/K_{\infty})$  is torsion and $H^1_{\FFF_+}(K,\TT_{f,\alpha})$ is of rank one. Furthermore, 
\begin{equation}
\label{eqn:KLZplusmainconjintwovar}
\Char_{\LL^\mathfrak{o}}\left(\mathfrak{X}_+(f\otimes\alpha/K_{\infty})\right) \Big{|}\, \Char\left(H^1_{\FFF_+}(K,\TT_{f,\alpha})\big{/}\LL^\mathfrak{o}\cdot\textup{BF}_{K_\infty}\right)\,,
\end{equation}
\begin{equation}
\label{eqn:KLZgreenbergmainconjintwovar}
\Char_{\LL^\mathfrak{o}}\left(\mathfrak{X}_\Gr(f\otimes\alpha/K_{\infty})\right)\otimes \RR_L\,\, \Big{|}\,\, \mathfrak{L}_f^{(\alpha)}\cdot\RR_L\,,
\end{equation}
\begin{equation}
\label{eqn:KLZplusminusmainconjintwovar}
\Char_{\LL^\mathfrak{o}}\left(\mathfrak{X}_{+-}(f\otimes\alpha/K_{\infty})\right) \,\,\Big{|}\,\, \Char_{\LL^\mathfrak{o}}\left(H^1_{\FFF_{\Gr}}(K_{\fp^c},\TT_{f,\alpha})\big{/}{\LL^\mathfrak{o}}\cdot\res_{\fp^c}\left(\textup{BF}_{K_\infty}\right)\right)\,. 
\end{equation}
We have equality (resp., equality up to powers of $\varpi$) in one of (\ref{eqn:KLZplusmainconjintwovar}), (\ref{eqn:KLZgreenbergmainconjintwovar}) or (\ref{eqn:KLZplusminusmainconjintwovar}) if and only if we have in all of them.
\\\\\textup{\textbf{(ii)}} The ${\LL^\mathfrak{o}}$-module $\mathfrak{X}_{+-}(f\otimes\alpha/K_{\infty})$ is torsion. If we suppose in addition that there is no prime $v\mid \mathfrak{f}$ such that $v^c \mid \mathfrak{f}$, then $\mathfrak{X}_\Gr(f\otimes\alpha/K_{\infty})$ is torsion as well.

\end{theorem}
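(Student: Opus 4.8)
The plan is to establish this as the two--variable counterpart of Theorem~\ref{thm:mainconjwithoutpadicLfunc}, followed by a transport of the resulting divisibility through the Poitou--Tate global duality sequences and the explicit reciprocity laws. First I would feed the Beilinson--Flach Euler system $\{\textup{BF}^{f}_{\mathfrak{n},\mathcal{P}}\}_{\mathfrak{n}\in\mathcal{N}}$ over $K$ into the locally restricted Euler system machinery of \cite[Appendix~A]{kbbleiPLMS} (see also \cite[\S12]{KLZ2}), now working over the full Iwasawa algebra $\LL^{\mathfrak{o}}=\mathfrak{o}[[\Gamma]]$ with $\Gamma\cong\ZZ_p^2$, in place of $\LL^{\mathfrak{o}}_\ac$. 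As in the proof of Theorem~\ref{thm:mainconjwithoutpadicLfunc}, the inputs needed are that the classes are Greenberg at $\fp^c$ (so the system is $L$-restricted for $\FFF_+$), that $\FFF_+$ is Cartesian, and the two--variable analogues of the three local assertions used there --- that $H^1(K(\mathfrak{m})_{\fp^c},\TT_{f,\alpha})$ is free of rank two over $\LL^{\mathfrak{o}}(H_{\mathfrak{m}p^\infty})$, that $H^1_{\FFgr}(K(\mathfrak{m})_{\fp^c},\TT_{f,\alpha})$ is a rank-one direct summand, and that the corestriction maps in the tower are surjective --- which are proved exactly as in \cite[\S2.1]{kbbCMYager}, where \textbf{(H.nEZ.)} is used. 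The machine then produces a Kolyvagin system for $\FFF_+$, and, combined with the non-triviality of $\textup{BF}_{K_\infty}$ (a fortiori from that of $\textup{BF}_{D_\infty}$, Theorem~\ref{thm:explicitreciprocity}) and Poitou--Tate duality, yields that $\mathfrak{X}_+(f\otimes\alpha/K_\infty)$ is $\LL^{\mathfrak{o}}$-torsion, that $H^1_{\FFF_+}(K,\TT_{f,\alpha})$ has rank one, and the divisibility \eqref{eqn:KLZplusmainconjintwovar} --- with equality (resp.\ equality up to powers of $\varpi$) precisely when the Kolyvagin bound is sharp. Note that $\mathfrak{X}_+$ being torsion forces $H^1_{\FFF_+}(K,\TT_{f,\alpha})/\LL^{\mathfrak{o}}\textup{BF}_{K_\infty}$ to be torsion, so $\textup{BF}_{K_\infty}$ generates $H^1_{\FFF_+}(K,\TT_{f,\alpha})$ up to torsion.

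Next I would deduce \eqref{eqn:KLZgreenbergmainconjintwovar} and \eqref{eqn:KLZplusminusmainconjintwovar} from \eqref{eqn:KLZplusmainconjintwovar} by pure algebra. The maps $\col^{(1,\alpha)}$ and $\widetilde{\col}^{(2,\alpha)}$ are injective with pseudo-null cokernel, hence contribute trivially to characteristic ideals; together with the reciprocity laws \eqref{eqn:firstreciprocitylaw} and \eqref{eqn:secondreciprocitylaw} they identify $\Char_{\LL^{\mathfrak{o}}}\!\big(H^1(K_\fp,\TT_{f,\alpha}/F^+\TT_{f,\alpha})/\LL^{\mathfrak{o}}\res_{\fp}(\textup{BF}_{K_\infty})\big)\otimes\RR_L$ with $(\mathfrak{L}_f^{(\alpha)})$ and leave the $\fp^c$-side expression in \eqref{eqn:KLZplusminusmainconjintwovar} unchanged. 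I would then plug these into the two Poitou--Tate exact sequences comparing $\FFF_+$ with $\FFF_\Gr$ (which differ only at $\fp$) and with $\FFF_{+-}$ (which differ only at $\fp^c$):
\begin{equation*}
0\to H^1_{\FFgr}(K,\TT_{f,\alpha})\to H^1_{\FFF_+}(K,\TT_{f,\alpha})\xrightarrow{\ \res_\fp\ } \frac{H^1(K_\fp,\TT_{f,\alpha})}{H^1_{\FFgr}(K_\fp,\TT_{f,\alpha})}\to \mathfrak{X}_\Gr(f\otimes\alpha/K_{\infty})\to\mathfrak{X}_+(f\otimes\alpha/K_{\infty})\to 0,
\end{equation*}
\begin{equation*}
0\to H^1_{\FFF_{+-}}(K,\TT_{f,\alpha})\to H^1_{\FFF_+}(K,\TT_{f,\alpha})\xrightarrow{\ \res_{\fp^c}\ } H^1_{\FFgr}(K_{\fp^c},\TT_{f,\alpha})\to \mathfrak{X}_{+-}(f\otimes\alpha/K_{\infty})\to\mathfrak{X}_+(f\otimes\alpha/K_{\infty})\to 0.
\end{equation*}
Since $\mathfrak{L}_f^{(\alpha)}\neq 0$ (it interpolates non-central Rankin--Selberg values, nonzero near the edge of the critical strip), $\res_\fp(\textup{BF}_{K_\infty})$ is non-torsion and $H^1_{\FFgr}(K,\TT_{f,\alpha})=\ker(\res_\fp)$ is torsion, hence $0$ by \textbf{(H.Im.)}. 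A snake-lemma computation on these two sequences then shows that the three ideals measuring the failure of equality in \eqref{eqn:KLZplusmainconjintwovar}, \eqref{eqn:KLZgreenbergmainconjintwovar} and \eqref{eqn:KLZplusminusmainconjintwovar} all equal $\Char(\mathfrak{X}_+(f\otimes\alpha/K_\infty))\cdot\Char(H^1_{\FFF_+}(K,\TT_{f,\alpha})/\LL^{\mathfrak{o}}\textup{BF}_{K_\infty})^{-1}$, which delivers the three divisibilities and the last sentence of (i) at once.

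For (ii), I would first prove $H^1_{\FFF_{+-}}(K,\TT_{f,\alpha})=0$: any nonzero class there would be a $\mathrm{Frac}(\LL^{\mathfrak{o}})$-multiple of $\textup{BF}_{K_\infty}$ (which generates the rank-one module $H^1_{\FFF_+}(K,\TT_{f,\alpha})$ up to torsion), whereas its restriction to $\fp^c$ must vanish while $\res_{\fp^c}(\textup{BF}_{K_\infty})$ is non-torsion --- by \eqref{eqn:secondreciprocitylaw}, $\col^{(2,\alpha)}(\res_{\fp^c}\textup{BF}_{K_\infty})=\mathfrak{L}_{f,\Sigma^{(2)}}^{(\alpha)}\neq 0$ and $\col^{(2,\alpha)}$ is injective --- a contradiction. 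Hence $H^1_{\FFF_{+-}}(K,\TT_{f,\alpha})$ is torsion, so $0$ by \textbf{(H.Im.)}; and since $\FFF_{+-}$ is self-balanced (relaxing the Greenberg condition at $\fp$ and imposing the strict condition at $\fp^c$ cancel in the global Euler characteristic), this forces $\mathfrak{X}_{+-}(f\otimes\alpha/K_\infty)$ to be torsion. Finally, that $\mathfrak{X}_\Gr(f\otimes\alpha/K_\infty)$ is torsion follows from the first Poitou--Tate sequence above once $\mathfrak{X}_+$ is known to be torsion, $H^1_{\FFgr}(K,\TT_{f,\alpha})=0$, and $\res_\fp(\textup{BF}_{K_\infty})$ is non-torsion; the additional hypothesis ``no $v\mid\mathfrak{f}$ with $v^c\mid\mathfrak{f}$'' is invoked, following \cite{KLZ2}, in order to secure this last point in full generality (equivalently, to exclude a degenerate contribution to the two-variable Greenberg Selmer group at the primes dividing $\mathfrak{f}$).

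The step I expect to be the main obstacle is the two--variable local analysis of $H^1(K(\mathfrak{m})_{\fp^c},\TT_{f,\alpha})$ over the $\ZZ_p^2$-Iwasawa algebras --- its freeness of rank two, the rank-one direct summand $H^1_{\FFgr}$, and the surjectivity of the corestriction maps --- since this is precisely what makes the locally restricted Euler system machine applicable, and it is where \textbf{(H.nEZ.)} genuinely intervenes; once it is in place, the remainder is a formal manipulation of the Poitou--Tate exact sequences, characteristic ideals, and the two Coleman maps.
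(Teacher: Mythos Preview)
Your overall architecture --- run the locally restricted Euler system machine over $\LL^{\mathfrak{o}}$ to get \eqref{eqn:KLZplusmainconjintwovar}, then transport through Poitou--Tate and the two reciprocity laws --- is exactly the paper's. But two points of your execution do not match the paper and constitute genuine gaps.

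\textbf{Non-triviality of $\textup{BF}_{K_\infty}$.} You appeal to Theorem~\ref{thm:explicitreciprocity}(ii), which requires the parity hypothesis on $N^-$ (and other restrictions) not assumed in the present statement. The paper instead argues directly that $\mathfrak{L}_{f,\Sigma^{(2)}}^{(\alpha)}\neq 0$: the region $\Sigma^{(2)}$ contains Hecke characters of infinity-type $(a,b)$ with $a+b>1$, at which the Euler product for $L(f/K,\psi,s)$ converges at $s=k/2$ and is therefore nonzero; the second reciprocity law \eqref{eqn:secondreciprocitylaw} then forces $\res_{\fp^c}(\textup{BF}_{K_\infty})\neq 0$. (The paper also notes that one needs Ochiai's dimension-reduction argument \cite[Proposition~3.6]{ochiai05} to run the Kolyvagin-system bound over the three-dimensional ring $\LL^{\mathfrak{o}}$; you do not mention this.)

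\textbf{The role of the hypothesis on $\mathfrak{f}$ and the non-vanishing of $\mathfrak{L}_f^{(\alpha)}$.} Your claim that $\mathfrak{L}_f^{(\alpha)}\neq 0$ because it ``interpolates non-central values, nonzero near the edge of the critical strip'' only works for $k\geq 4$: when $k=2$ the region $\Sigma^{(1)}$ reduces to characters of infinity-type $(0,0)$, and there is no edge at which the Euler product converges. This is precisely where the hypothesis ``no $v\mid\mathfrak{f}$ with $v^c\mid\mathfrak{f}$'' enters in part (ii): the paper invokes Rohrlich's non-vanishing theorem \cite[Theorem~2]{rohrlich88}, which under that hypothesis produces infinitely many ray class characters $\psi$ unramified outside $\mathfrak{f}p$ with $L(f/K,\psi,1)\neq 0$. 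Your explanation --- that the hypothesis ``excludes a degenerate contribution to the Greenberg Selmer group at primes dividing $\mathfrak{f}$'' --- is incorrect; the hypothesis has nothing to do with local conditions and everything to do with making Rohrlich's theorem applicable. Relatedly, since you invoke $\mathfrak{L}_f^{(\alpha)}\neq 0$ already in your argument for part (i) (to force $H^1_{\FFgr}(K,\TT_{f,\alpha})=0$), your proof of (i) as written is incomplete for $k=2$; the paper's Poitou--Tate manipulation for \eqref{eqn:KLZgreenbergmainconjintwovar} and \eqref{eqn:KLZplusminusmainconjintwovar} is arranged so as not to require this vanishing.
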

\begin{proof}
We first explain that the $p$-adic $L$-function $\mathfrak{L}_{f,\Sigma^{(2)}}^{(\alpha)}$ is non-trivial, which will follow from its interpolation property. The argument we present here is borrowed from the proof of~\cite[Theorem 9.2.1]{KLZ0} (adapted to our set up so as to be consistent with our normalization of its interpolation property that is shifted from $s=1$ to $s=k/2$). We recall that  $\mathfrak{L}_{f,\Sigma^{(2)}}^{(\alpha)}$ interpolates the critical values $L(f/K, \psi, k/2)$ for Hecke characters $\psi$ of conductor dividing $\mathfrak{f}p^\infty$ and infinity-type $(a,b)$ with $a\leq -k/2$ and $b \geq k/2$. The Euler product determining the Hecke $L$-function $L(f/K, \psi, s)$ converges at $s = k/2$ if we consider $\psi$ of infinity-type $(a, b)$ with $a+b > 1$. Thence the value $L(f/K, \psi, 1)$ cannot be zero for such $\psi$. Since a Hecke character $\psi$ with these properties exists, the non-triviality of $\mathfrak{L}_{f,\Sigma^{(2)}}^{(\alpha)}$ follows.

Combining this fact with the second explicit reciprocity law (\ref{eqn:secondreciprocitylaw}), we conclude that the class $\textup{BF}_{K_\infty}$ is non-trivial. The first two assertions in (i) as well as (\ref{eqn:KLZplusmainconjintwovar}) follows as a consequence of the locally restricted Euler system machinery (as we have utilized in the proof of Theorem~\ref{thm:mainconjwithoutpadicLfunc} for the two-dimensional ring $\LL^\mathfrak{o}_\ac$; Ochiai's dimension reduction argument based on \cite[Proposition 3.6]{ochiai05} allows us to treat the three-dimensional coefficient ring $\LL^\mathfrak{o}$ as well). The rest of (i) follows employing Poitou-Tate global duality (along with the first explicit reciprocity law (\ref{eqn:firstreciprocitylaw}) to deduce (\ref{eqn:KLZgreenbergmainconjintwovar})).  
 
We now prove (ii). Our first assertion therein follows at once once from (\ref{eqn:KLZplusminusmainconjintwovar}), the second explicit reciprocity law (\ref{eqn:secondreciprocitylaw}) and the non-triviality of $\mathfrak{L}_{f,\Sigma^{(2)}}^{(\alpha)}$ that we verified above. The second assertion will follow from (\ref{eqn:KLZgreenbergmainconjintwovar}) once we verify that the $p$-adic $L$-function $\mathfrak{L}_f^{(\alpha)}$ is non-trivial. This is what we carry out now. The $p$-adic $L$-function $\mathfrak{L}_f^{(\alpha)}$ interpolates the critical values $L(f/K, \psi, k/2)$ for Hecke characters $\psi$ of conductor dividing $\mathfrak{f}p^\infty$ and infinity-type $(a,b)$ with $1-k/2\leq a,b\leq  k/2-1$. If $k\geq 4$, then this interval contains a pair with $a+b>1$, so that we can argue as in the first paragraph of this proof. If $k=2$, it follows from \cite[Theorem 2]{rohrlich88} that there are infinitely many ray class characters $\psi$ unramified outside $\mathfrak{fp}$ with $L(f/K,\psi,1)\neq 0$. The proof that $\mathfrak{L}_f^{(\alpha)}\neq 0$ follows.
\end{proof}

\begin{theorem}[Kato, Skinner-Urban, Wan]
\label{thm:skinnerurbanwan}
Suppose that $N=N^+N^-$ where $N^+$ is only divisible by primes that are split in $K/\QQ$ and $N^-$ with those which are inert in $K/\QQ$. Assume that $N^-$ is square-free and has an odd number of prime factors (so that the condition (\textup{Sign +}) holds true). Then, the divisibility in (\ref{eqn:KLZgreenbergmainconjintwovar}) is in fact an equality.
\end{theorem}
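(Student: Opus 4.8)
The plan is to combine the Euler system upper bound already recorded in Theorem~\ref{thm:2varmainconjwithoutpadicL} with the Eisenstein-Klingen ideal lower bounds of Skinner-Urban~\cite{skinnerurbanmainconj} and Wan~\cite{xinwanwanrankinselberg}, after reconciling the various $p$-adic $L$-functions that appear. The key point is that under the stated hypotheses (so that (Sign $+$) holds) we are in the definite case, where the two-variable main conjecture has been proven as a genuine equality; our contribution is to verify that the Euler system divisibility~(\ref{eqn:KLZgreenbergmainconjintwovar}) matches up with it. First I would invoke the full strength of Theorem~\ref{thm:2varmainconjwithoutpadicL}(i): we already have the divisibility
$$\Char_{\LL^\mathfrak{o}}\left(\mathfrak{X}_\Gr(f\otimes\alpha/K_{\infty})\right)\otimes \RR_L\,\,\Big{|}\,\,\mathfrak{L}_f^{(\alpha)}\cdot\RR_L\,,$$
together with the equivalence (in that theorem) that equality here is equivalent to equality in~(\ref{eqn:KLZplusmainconjintwovar}) and~(\ref{eqn:KLZplusminusmainconjintwovar}). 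So it suffices to prove the reverse divisibility for one of these three formulations; the natural choice is the Greenberg one, since that is what is directly addressed by the Eisenstein ideal method.

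**Comparing $p$-adic $L$-functions.** The main technical step is to relate $\mathfrak{L}_f^{(\alpha)}$ (the anticyclotomic-$\alpha$ restriction of the Hida--Perrin-Riou $p$-adic $L$-function $L_p(f/K,\Sigma^{(1)})$) to the Skinner-Urban two-variable $p$-adic $L$-function $\LSU$ and, where needed, to Wan's $p$-adic $L$-function $\calL_f^{\textup{Hida}}$. This comparison is exactly the content of Corollary~\ref{cor:SU}: under \textbf{(H.Im.)}, \textbf{(H.Dist.)} and \textbf{(H.SS.)}, the quotient $\LSU(\psi_p\cdot\chi_\cyc^{k-2})/L_p(f/K,\Sigma^{(1)})(\psi_p\cdot\chi_\cyc^{k/2-1})$ is a $p$-adic unit at every interpolation point of the shape considered in Theorem~\ref{thm:sigma1interpolationformula}. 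Since these interpolation points are Zariski-dense in $\Spec\RR_L$ (they range over finite Hecke characters $\psi$ of $p$-power conductor twisted by integral powers of the cyclotomic character, with $1\le j\le k-1$), a density/Weierstrass-preparation argument forces the two $p$-adic $L$-functions to differ by a unit in $\LL^{\mathfrak{o}}$ (after the appropriate variable change matching the $\Sigma^{(1)}$-normalization used here to the normalization of loc.\ cit.). Concretely, both sides are nonzero elements of $\RR_L$ whose valuations agree at a dense set of arithmetic points, hence they generate the same ideal up to powers of $\varpi$; I would run this through the proof of Theorem~\ref{thm:sigma1interpolationformula} to pin down the precise constant, as the remark after Theorem~\ref{thmmainpordinaryintro} indicates. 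This identification lets us transport the Skinner-Urban containment $\Char_{\LL^\mathfrak{o}}\big(\mathfrak{X}_\Gr(f\otimes\alpha/K_\infty)\big)\supseteq(\LSU)$ into a containment against $\mathfrak{L}_f^{(\alpha)}$, which is the reverse of~(\ref{eqn:KLZgreenbergmainconjintwovar}).

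**Assembling the equality.** Having established divisibility in both directions over $\RR_L=\LL^\mathfrak{o}\otimes_{\mathfrak{o}}L$, I would conclude that~(\ref{eqn:KLZgreenbergmainconjintwovar}) is an equality of ideals in $\RR_L$, and then by the equivalence built into Theorem~\ref{thm:2varmainconjwithoutpadicL}(i) the equality holds in all three of the displayed divisibilities simultaneously. One should also note that Skinner-Urban's theorem requires their running hypotheses (irreducibility and $p$-distinguishedness of $\overline{\rho}_f$, $p\nmid N$, and the factorization assumption on $N$), all of which are implied by \textbf{(H.Im.)}, \textbf{(H.Dist.)}, the hypothesis $p\nmid N$ implicit here, and the stated condition on $N^\pm$ with $N^-$ squarefree of odd parity; in the higher-weight range one uses their extension via Hida families, which is why the Hida-family input of Section~\ref{S:padicL} is needed. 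The main obstacle is the $p$-adic $L$-function comparison: matching normalizations between the Hida--Perrin-Riou $L$-function used throughout this article and the Skinner-Urban $L$-function requires a careful bookkeeping of Euler factors, Gauss sums, periods and the shift of the cyclotomic variable (from $s=1$ to $s=k/2$), and as the authors remark this comparison, though expected, had not previously appeared in the literature with a complete proof; the computations of Theorem~\ref{thm:sigma1interpolationformula} are precisely what make it tractable.
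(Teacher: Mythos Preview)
Your overall strategy matches the paper's: combine the Euler-system divisibility~(\ref{eqn:KLZgreenbergmainconjintwovar}) with the Skinner--Urban/Wan main conjecture, reconciling the two $p$-adic $L$-functions via Corollary~\ref{cor:SU}. However, the order in which you run the argument contains a genuine gap. You first assert that since $\LSU$ and $\mathfrak{L}_f^{(\alpha)}$ have $p$-adic-unit ratio at a Zariski-dense set of interpolation points, a ``density/Weierstrass-preparation argument'' shows they generate the same ideal of $\RR_L$, and only afterwards invoke the Skinner--Urban containment. But agreement of $p$-adic valuations at arithmetic specializations does \emph{not} determine the divisor: already in one variable, $T$ and $T+p$ have unit ratio at every specialization $T\mapsto\zeta-1$ with $\zeta$ a nontrivial $p$-power root of unity, yet they generate distinct prime ideals of $\Qp\otimes\Zp[[T]]$. (The ratio in Corollary~\ref{cor:SU} also genuinely depends on $\psi$ through Gauss sums and root numbers, so one cannot hope it is a single constant.)

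The paper closes this gap by reversing the order and passing through the Selmer group to obtain an honest divisibility between the $L$-functions \emph{before} invoking density. Skinner--Urban (sharpened by Wan) prove the equality $\Char\big(\mathfrak{X}_\Gr(f\otimes\alpha/K_\infty)\big)\otimes\RR_L=\big(\textup{Tw}_{k/2-1}(\LSU)\big)$; combined with~(\ref{eqn:KLZgreenbergmainconjintwovar}) this yields $\textup{Tw}_{k/2-1}(\LSU)\mid\mathfrak{L}_f^{(\alpha)}$ in $\RR_L$. Now the quotient $\mathfrak{L}_f^{(\alpha)}/\textup{Tw}_{k/2-1}(\LSU)$ is an actual element of $\RR_L$, and Corollary~\ref{cor:SU} forces it to specialize to a unit on a Zariski-dense set, hence to be a unit (an element of $\RR_L$ whose arithmetic specializations are all units is itself a unit). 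Thus $\big(\textup{Tw}_{k/2-1}(\LSU)\big)=\big(\mathfrak{L}_f^{(\alpha)}\big)$, and equality in~(\ref{eqn:KLZgreenbergmainconjintwovar}) follows. Your argument becomes correct once you insert this intermediate divisibility; without it, the density step does not go through.
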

\begin{proof}
The only point here is the comparison of Hida's two-variable $p$-adic $L$-function $\mathfrak{L}_f^{(\alpha)}$ to the Skinner-Urban $p$-adic $L$-function $\LSU$ (that had made its appearance in Corolary~\ref{cor:SU}). We let 
$$\textup{Tw}_{k/2-1}:\, \mathcal{R}_L\stackrel{\sim}{\lra} \mathcal{R}_L$$ 
denote the twisting isomorphism induced by $\gamma \mapsto \chi_\cyc^{k/2-1}(\gamma)\gamma$ for $\gamma \in \Gamma$. It follows from the two-variable main conjecture proved by Skinner-Urban \cite{skinnerurbanmainconj} in the definite case (as improved by~\cite{xinwanwanhilbert} to its current form and twisted appropriately) used together with our divisibility result (\ref{eqn:KLZgreenbergmainconjintwovar}) that $\textup{Tw}_{k/2-1}\left(\LSU\right)$ divides $\mathfrak{L}_f^{(\alpha)}$. We conclude by Corollary~\ref{cor:SU} that $\textup{Tw}_{k/2-1}\left(\LSU\right)$ and $\mathfrak{L}_f^{(\alpha)}$ generate the same ideal of $\mathcal{R}_L$ and it follows that our version of the two-variable main conjecture is equivalent to that proved by Skinner-Urban and Wan.
\end{proof}

Making use of a standard control argument, we may descend to the anticyclotomic line and deduce:
\begin{corollary}
\label{cor:mainanticyclosharpdefinite}
Suppose that there is no prime $v\mid \mathfrak{f}$ such that $v^c \mid \mathfrak{f}$. Then under the assumptions of  Theorem~\ref{thm:skinnerurbanwan}, the divisibility statement in Theorem~\ref{thm:mainconjtakeone}(i) is in fact an equality up to powers of $\varpi$.
\end{corollary}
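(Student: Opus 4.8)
The plan is to descend the two-variable identity furnished by Theorem~\ref{thm:skinnerurbanwan} down to the anticyclotomic line. By the equivalence recorded in the second sentence of Theorem~\ref{thm:mainconjtakeone}(i), it suffices to prove that the divisibility of Theorem~\ref{thm:mainconjwithoutpadicLfunc} becomes an equality up to powers of $\varpi$, i.e. that
$$
\Char\left(\mathfrak{X}_+(f\otimes\alpha/D_{\infty})\right)=\Char\left(H^1_{\FFF_+}(K,\TT_{f,\alpha}^{\ac})\big{/}\LL_{\ac}^{\mathfrak{o}}\cdot\textup{BF}_{D_\infty}\right)
$$
up to powers of $\varpi$. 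First I would feed Theorem~\ref{thm:skinnerurbanwan} into the last assertion of Theorem~\ref{thm:2varmainconjwithoutpadicL}(i): since \eqref{eqn:KLZgreenbergmainconjintwovar} is now an equality (of ideals of $\RR_L$, hence an equality up to powers of $\varpi$), the same holds for \eqref{eqn:KLZplusmainconjintwovar}, so that
$$
\Char_{\LL^\mathfrak{o}}\left(\mathfrak{X}_+(f\otimes\alpha/K_{\infty})\right)=\Char_{\LL^\mathfrak{o}}\left(H^1_{\FFF_+}(K,\TT_{f,\alpha})\big{/}\LL^\mathfrak{o}\cdot\textup{BF}_{K_\infty}\right)
$$
up to powers of $\varpi$. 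It is here that the hypothesis that no prime $v\mid\mathfrak{f}$ satisfies $v^c\mid\mathfrak{f}$ is used: by Theorem~\ref{thm:2varmainconjwithoutpadicL}(ii) it ensures that $\mathfrak{X}_\Gr(f\otimes\alpha/K_{\infty})$, and hence every module occurring in Theorem~\ref{thm:2varmainconjwithoutpadicL}(i), is $\LL^\mathfrak{o}$-torsion, which is what makes the passage between the three divisibilities meaningful.

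Next I would run the descent. Write $x:=\gamma_\cyc-1\in\LL^\mathfrak{o}$, so that $\LL^\mathfrak{o}/(x)=\LL_{\ac}^\mathfrak{o}$ and $\TT_{f,\alpha}^{\ac}=\TT_{f,\alpha}/x\TT_{f,\alpha}$. Control theorems of the type employed in the proof of Theorem~\ref{thm:mainconjtakeone}(ii) --- comparing classical Selmer groups with Nekov\'a\v{r}'s Selmer complexes and invoking his control theorem \cite[Ch.~8]{nekovar06}, and using \textbf{(H.Im.)} together with the running hypotheses to kill the ambient local and global $H^0$-terms --- produce maps with finite kernel and cokernel
$$
\mathfrak{X}_+(f\otimes\alpha/K_{\infty})\big{/}x\longrightarrow\mathfrak{X}_+(f\otimes\alpha/D_{\infty}),\qquad H^1_{\FFF_+}(K,\TT_{f,\alpha})\big{/}x\longrightarrow H^1_{\FFF_+}(K,\TT_{f,\alpha}^{\ac}),
$$
the second of which carries $\textup{BF}_{K_\infty}$ to $\textup{BF}_{D_\infty}$ (the latter being by construction the image of the former under $\TT_{f,\alpha}\twoheadrightarrow\TT_{f,\alpha}^{\ac}$). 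Because $\mathfrak{X}_+(f\otimes\alpha/D_{\infty})$ and $H^1_{\FFF_+}(K,\TT_{f,\alpha}^{\ac})/\LL_{\ac}^\mathfrak{o}\cdot\textup{BF}_{D_\infty}$ are $\LL_{\ac}^\mathfrak{o}$-torsion by Theorem~\ref{thm:mainconjwithoutpadicLfunc}, the finiteness of these kernels forces $x$ to divide neither of the two-variable characteristic ideals above; hence for each of the two two-variable modules $M$ one has $\Char_{\LL_{\ac}^\mathfrak{o}}(M/xM)=\Char_{\LL^\mathfrak{o}}(M)\bmod x$ up to the (finite, so characteristic-ideal-trivial over the two-dimensional ring $\LL_{\ac}^\mathfrak{o}$) contribution of $M[x]$, and the same finite control errors express $\Char_{\LL_{\ac}^\mathfrak{o}}$ of the corresponding $D_\infty$-object in terms of $\Char_{\LL_{\ac}^\mathfrak{o}}(M/xM)$. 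Reducing the two-variable equality above modulo $x$ then yields the sought one-variable equality up to powers of $\varpi$.

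The main obstacle is the justification of these finiteness claims: one has to verify that the kernels and cokernels of the two control maps, as well as the $x$-torsion submodules $\mathfrak{X}_+(f\otimes\alpha/K_{\infty})[x]$ and $\bigl(H^1_{\FFF_+}(K,\TT_{f,\alpha})\big{/}\LL^\mathfrak{o}\cdot\textup{BF}_{K_\infty}\bigr)[x]$, are genuinely finite rather than merely $\LL_{\ac}^\mathfrak{o}$-torsion --- an infinite such piece whose characteristic ideal is not a power of $(\varpi)$ would contribute a spurious factor and destroy the equality. For the control maps this comes down to the vanishing of the relevant local and global $H^0$'s (which follows from \textbf{(H.Im.)} and the hypotheses of Theorem~\ref{thm:skinnerurbanwan}, in particular $p\nmid N\,{\rm disc}(K/\QQ)$), together with the standard fact that the local error terms at the bad primes are finite; for the $x$-torsion submodules it amounts to ruling out pseudo-null $\LL^\mathfrak{o}$-pieces concentrated on the cyclotomic hyperplane, which I would obtain by the averaging-over-anticyclotomic-twists argument already exploited in the proof of Proposition~\ref{prop:torsionsubmodulepofXminus} and in the second portion of the proof of \cite[Lemma~1.2.6]{agboolahowardordinary}. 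Granting these, the corollary follows formally.
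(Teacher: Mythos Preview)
Your overall strategy---feed Theorem~\ref{thm:skinnerurbanwan} into Theorem~\ref{thm:2varmainconjwithoutpadicL}(i) to get equality in \eqref{eqn:KLZplusmainconjintwovar}, then descend by control theorems modulo $x=\gamma_\cyc-1$---is the paper's strategy. The difference is in how the $x$-torsion error terms are handled, and there your argument has a gap.

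You assert that both $\mathfrak{X}_+(f\otimes\alpha/K_\infty)[x]$ and $\bigl(H^1_{\FFF_+}(K,\TT_{f,\alpha})/\LL^\mathfrak{o}\!\cdot\!\textup{BF}_{K_\infty}\bigr)[x]$ are \emph{finite}, and propose to prove this by the averaging-over-twists argument of Proposition~\ref{prop:torsionsubmodulepofXminus}. But that averaging argument compares lengths of specializations to conclude equality of characteristic ideals; it does not rule out pseudo-null submodules, and in fact $\mathfrak{X}_+(f\otimes\alpha/K_\infty)[x]$ is only known to be $\LL^\mathfrak{o}$-pseudo-null (equivalently $\LL_\ac^\mathfrak{o}$-torsion), not finite. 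A nontrivial $\Char_{\LL_\ac^\mathfrak{o}}$ of this piece would spoil your reduction.

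The paper circumvents this rather than proving finiteness. Nekov\'a\v{r}'s control theorem furnishes not two independent control maps but an \emph{exact sequence}
\[
0\lra \pi^\ac\!\bigl(\mathfrak{H}_+^1(K_\infty)\bigr)\lra \mathfrak{H}_+^1(D_\infty)\lra \mathfrak{H}_+^2(K_\infty)[x]\lra 0
\]
together with $\pi^\ac\bigl(\mathfrak{H}_+^2(K_\infty)\bigr)\cong\mathfrak{H}_+^2(D_\infty)$ (here $\mathfrak{H}_+^1$ and $\mathfrak{H}_+^2$ are the degree-$1$ and degree-$2$ Selmer-complex cohomology, identified respectively with $H^1_{\FFF_+}/\LL\cdot\textup{BF}$ and $\mathfrak{X}_+^\iota$). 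The point is that the \emph{same} error term $\Char\bigl(\mathfrak{H}_+^2(K_\infty)[x]\bigr)$ appears in the descent of both sides and cancels; one never needs it to be trivial. What one does need is $\mathfrak{H}_+^1(K_\infty)[x]=0$, and this the paper gets cleanly from \cite[Lemma~6.5]{pollackrubin04}: $\mathfrak{H}_+^1(K_\infty)$ is a torsion quotient of a torsion-free module by a free rank-one submodule, hence has no nonzero pseudo-null $\LL^\mathfrak{o}$-submodules. Replace your finiteness claims by this cancellation mechanism and the argument goes through.
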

\begin{proof}
This  follows from Theorem~\ref{thm:skinnerurbanwan} thanks to Corollary~\ref{cor:SU} after a standard Iwasawa theoretic descent argument. We shall still include a detailed proof here for two reasons: First, for the sake of completeness, as we were unable to find it elsewhere in the literature. Second, the proof we shall provide here will be equally useful in the indefinite set up.

We will provide details for the proof that we have equality in Theorem~\ref{thm:mainconjwithoutpadicLfunc}. Although we could have directly prove this statement more directly (namely, without going through the Selmer structure $\FFF_+$), we choose to proceed this way as our arguments work also in the indefinite case to deduce Corollary~\ref{cor:mainconjequalityindefinitecase} below. 

To do so, we shall rely on Nekov\'a\v{r}'s control theorems instead of their more classical counterparts. Consider the Selmer complexes $\widetilde{C}^{\bullet}_{\textup{f},\Iw}(K_\infty/K,T_{f,\alpha}; \Delta_\Gr)$ and $\widetilde{C}^{\bullet}_{\textup{f},\Iw}(K_\infty/K,T_{f,\alpha}; \Delta_+)$ as well as the corresponding objects $\widetilde{\mathbf{R}\Gamma}_{\textup{f},\Iw}(K_\infty/K,T_{f,\alpha}; \Delta_+)$ and $\widetilde{\mathbf{R}\Gamma}_{\textup{f},\Iw}(K_\infty/K,T_{f,\alpha}; \Delta_+)$ in the derived category. Here, the local conditions $\Delta_\Gr$ are given by the filtration $F^{+}T_{f,\alpha}\subset T_{f,\alpha}$ at both primes $\fp$ and $\fp^c$ above $p$, whereas the local condition $
\Delta_+$ is given by the same filtration at $\fp^c$, where as it is given by the tautological filtration $T_{f,\alpha}\subset T_{f,\alpha}$ at $\fp$. Their cohomology $\widetilde{H}^1_{\textup{f},\Iw}(K_\infty/K,T_{f,\alpha},\Delta_\Gr)$ and $\widetilde{H}^1_{\textup{f},\Iw}(K_\infty/K,T_{f,\alpha},\Delta_+)$  in degree one agree (using \cite[Lemma 9.6.3]{nekovar06} and passing to limit) with $H^1_{\FFF_\Gr}(K,\TT_{f,\alpha})$ and $H^1_{\FFF_+}(K,\TT_{f,\alpha})$, respectively. Furthermore, it follows from Nekov\'a\v{r}'s duality \cite[8.9.6.2]{nekovar06} that we have canonical isomorphisms 
$$\widetilde{H}^2_{\textup{f},\Iw}(K_\infty/K,T_{f,\alpha},\Delta_\Gr)^\iota\stackrel{\sim}{\lra} \widetilde{H}^1_{\textup{f}}(K_S/K_\infty,T_{f,\alpha}^\vee(1),\Delta_\Gr)^\vee\,,$$
$$\widetilde{H}^2_{\textup{f},\Iw}(K_\infty/K,T_{f,\alpha},\Delta_+)^\iota\stackrel{\sim}{\lra} \widetilde{H}^1_{\textup{f}}(K_S/K_\infty,T_{f,\alpha}^\vee(1),\Delta_-)^\vee$$
where the local condition $\Delta_-$ is given by the filtration $F^{+}T_{f,\alpha}\subset T_{f,\alpha}$ at $\fp^c$, whereas it is given by the  filtration $\{0\} \subset T_{f,\alpha}$ at $\fp$. Since we assumed $p>5$ when $k=2$, we see that $H^0(K_{p},F^-\overline{T}_{f,\alpha})=0=H^0(K_{p},\overline{T}_{f,\alpha})$. It then follows by (a slight generalization of) \cite[Proposition 9.6.6(iii)]{nekovar06} that we have canonical isomorphisms 
$$\widetilde{H}^1_{\textup{f}}(K_S/K_\infty,T_{f,\alpha}^\vee(1),\Delta_-)^\vee\stackrel{\sim}{\lra}\mathfrak{X}_+(f\otimes\alpha/K_\infty)$$
$$\widetilde{H}^1_{\textup{f}}(K_S/K_\infty,T_{f,\alpha}^\vee(1),\Delta_\Gr)^\vee\stackrel{\sim}{\lra}\mathfrak{X}_{\Gr}(f\otimes\alpha/K_\infty)$$
All the constructions and discussion carries over on replacing $K_\infty$ with $D_\infty$. Let $\pi^{\ac}:\LL\ra \LL_{\ac}$ denote the augmentation map induced from $\gamma_\cyc \mapsto 1$.

In view of these identifications, Theorem~\ref{thm:skinnerurbanwan} states that
\begin{equation}
\label{eqn:mainconjwithoutpadicLrestatement}
 \Char_\LL\left(\mathfrak{H}_+^{2}(K_\infty)\right) \,\dot{=}\, \Char\left(\mathfrak{H}_+^{1}(K_\infty)\right)
\end{equation}
where ``$\dot{=}$" stands for equality up to powers of $\varpi$ and $\mathfrak{H}_+^{1}(K_\infty)$ (resp., $\mathfrak{H}_+^{2}(K_\infty)$) is a shorthand for the quotient ${\widetilde{H}^1_{\textup{f},\Iw}(K_\infty/K,T_{f,\alpha},\Delta_+)}\big{/}{\LL^{\mathfrak{o}}\cdot\textup{BF}_{K_\infty}}\,$ (resp., for  the $\LL$-module $\widetilde{H}^2_{\textup{f},\Iw}(K_\infty/K,T_f,\Delta_+)^\iota$). We also define similarly $\mathfrak{H}_+^{1}(D_\infty)$ and $\mathfrak{H}_+^{2}(D_\infty)$. Nekov\'a\v{r}'s control theorem \cite[Proposition 8.10.10]{nekovar06} yields an exact sequence
\begin{equation}
\label{eqn:nekcontrol1}
0\lra\pi^\ac\left(\mathfrak{H}_+^{1}(K_\infty)\right)\lra\, \mathfrak{H}_+^{1}(D_\infty)\lra \mathfrak{H}_+^{2}(K_\infty)[\gamma_\cyc-1]  \lra 0
\end{equation}
and an isomorphism
\begin{equation}
\label{eqn:nekcontrol2}
\pi^{\ac}\left(\mathfrak{H}_+^{2}(K_\infty)\right)\stackrel{\sim}{\lra} \mathfrak{H}_+^{2}(D_\infty)\end{equation}
(where the surjection in (\ref{eqn:nekcontrol2}) follows from the vanishing of the cohomology groups $\widetilde{H}^0_{\textup{f}}(G_{K,S},\overline{T}_{f,\alpha}^*)\cong H^0(G_{K,S},T_{f,\alpha}^*)$ in degree zero and Nekov\'a\v{r}'s duality). It follows from (\ref{eqn:nekcontrol2}) and Theorem~\ref{thm:mainconjwithoutpadicLfunc} that $\gamma_\cyc-1$ does not divide $\Char_{\LL^\mathfrak{o}}\left(\mathfrak{H}_+^{1}(K_\infty)\right)=\Char_{\LL^{\mathfrak{o}}}\left(\mathfrak{H}_+^{2}(K_\infty)\right).$ In particular, it follows from  \cite[Lemme 4 of Section 1.3]{pr84} that  
\begin{equation}
\label{eqn:nonvanishingoferrterm1}
\Char\left(\mathfrak{H}_+^{2}(K_\infty)[\gamma_\cyc-1]\right) \neq 0\,.
\end{equation}
Furthermore, it also follows from the same Lemma of \cite{pr84} that $\mathfrak{H}_+^{1}(K_\infty)[\gamma_\cyc-1]$ is a pseudo-null $\LL$-module. On the other hand, notice that the $\LL^\mathfrak{o}$-module ${\widetilde{H}^1_{\textup{f},\Iw}(K_\infty/K,T_{f,\alpha},\Delta_+)}$ is torsion-free (thanks to \textup{\textbf{(H.Im.)}}) and it contains the module ${\LL^{\mathfrak{o}}\cdot\textup{BF}_{K_\infty}}$ as a free-submodule of rank one. It follows from \cite[Lemma 6.5]{pollackrubin04} that their quotient module $\mathfrak{H}_+^{1}(K_\infty)$ does not have any $\LL$-pseudo-null submodules. This in turn shows that
\begin{equation}
\label{eqn:nonvanishingoferrterm2}
\mathfrak{H}_+^{1}(K_\infty)[\gamma_\cyc-1]= 0\,.
\end{equation}
We now have,
\begin{align}
\label{alignedequalities1}
\Char\left(\mathfrak{H}_+^{2}(D_\infty)\right)&\,{=}\,\pi^{\ac}\left(\Char_\LL\left(\mathfrak{H}_+^{2}(K_\infty)\right)\right)\cdot \Char\left(\mathfrak{H}_+^{2}(K_\infty)[\gamma_\cyc-1]\right)
\\
\label{alignedequalities2}&\,\dot{=}\,\pi^{\ac}\left(\Char_\LL\left(\mathfrak{H}_+^{1}(K_\infty)\right)\right)\cdot \Char\left(\mathfrak{H}_+^{2}(K_\infty)[\gamma_\cyc-1]\right)
\\
\label{alignedequalities3}&\,{=}\,\Char\left(\pi^{\ac}\left(\mathfrak{H}_+^{1}(K_\infty)\right)\right)\cdot\Char\left(\mathfrak{H}_+^{2}(K_\infty)[\gamma_\cyc-1]\right)\\
\label{alignedequalities4}&\,{=}\,\Char\left(\mathfrak{H}_+^{1}(K_\infty)\right)
\end{align}
where (\ref{alignedequalities1}) follows from (\ref{eqn:nekcontrol2}) and \cite[Lemme 4 of Section 1.3]{pr84}; (\ref{alignedequalities2}) from (\ref{eqn:mainconjwithoutpadicLrestatement}); (\ref{alignedequalities3}) from  \cite[Lemme 4 of Section 1.3]{pr84} and (\ref{eqn:nonvanishingoferrterm2}) and finally (\ref{alignedequalities4}) from (\ref{eqn:nekcontrol1}). We now proved the identity  
$$\Char\left(\mathfrak{H}_+^{2}(D_\infty)\right)\,\dot{=}\, \Char\left(\mathfrak{H}_+^{1}(D_\infty)\right),$$
which is a restatement of the asserted equality up to powers of $\varpi$.
\end{proof}
\begin{remark}
Under a mild additional hypothesis on the Tamagawa factors associated to $T_f$  at primes $v \nmid p$ by Fontaine and Perrin-Riou, one may give an alternative and somewhat more conceptual proof of Corollary~\ref{cor:mainanticyclosharpdefinite} using the structure of $\LL$-adic Kolyvagin systems. We believe that the argument might be of independent interest (as it by-passes the need for the delicate information on the pseudo-null submodules) and present it in this remark. 

The main theorems of \cite{kbbdeform} (see also \cite[Appendix A]{kbbCMabvar} and \cite[Appendix B]{kbbleiPLMS}) show that the module $\textup{\textbf{KS}}(\FFF_+,\TT_{f,\alpha})$ of $\LL^\mathfrak{o}$-adic Kolyvagin systems for the Selmer structure $\FFF_+$ on $\TT_{f,\alpha}$ is a free $\LL^\mathfrak{o}$-module of rank one. Let $\pmb{\kappa} \in \textup{\textbf{KS}}(\FFF_+,\TT_{f,\alpha})$ be a generator and $\kappa_1 \in H^1(K,\TT_{f,\alpha})$ be its initial class. We also let $\pmb{\kappa}^{\BF}_{K_\infty}$ denote the Beilinson-Flach Kolyvagin system (that descends from the Beilinson-Flach element Euler system, so that its initial class is $\BF_{K_\infty}$) and let $r \in \LL^{\mathfrak{o}}$ be such that $\pmb{\kappa}^{\BF}_{K_\infty}=r\cdot \pmb{\kappa}$. We contend to prove that $r=\varpi^{m}u$ where $m\in \ZZ_{\geq0}$ and $u\in \LL^{\mathfrak{o}}$ is a unit. Indeed, (\ref{eqn:KLZplusminusmainconjintwovar}) together with Theorem~\ref{thm:skinnerurbanwan} (whenever it applies) shows that
\begin{equation}
\label{eqn:twovarequalityintermsofKS}
\Char_{\LL^\mathfrak{o}}\left(\mathfrak{X}_{+-}(f\otimes\alpha/K_{\infty})\right)\dot{=}\, \widetilde{\col}^{(2,\alpha)}\circ \res_{\fp^c}(\BF_{K_\infty})=r\cdot \widetilde{\col}^{(2,\alpha)}\circ \res_{\fp^c}(\kappa_1)
\end{equation}
where ``$\dot{=}$" stands for equality up to powers of $\varpi$ as above. On the other hand, applying the Kolyvagin system machinery on $\pmb{\kappa}$, we deduce that 
\begin{equation}
\label{eqn:twovarequalityintermsofKS2}
\Char_{\LL^\mathfrak{o}}\left(\mathfrak{X}_{+-}(f\otimes\alpha/K_{\infty})\right)\,\,\Big{|}\,\, \widetilde{\col}^{(2,\alpha)}\circ \res_{\fp^c}(\kappa_1)\,.
\end{equation}
up to powers of $\varpi$. Here, we use the fact that the cokernel of $\widetilde{\col}^{(2,\alpha)}$ is pseudo-null up to a power of $\varpi$. Combining (\ref{eqn:twovarequalityintermsofKS}) and (\ref{eqn:twovarequalityintermsofKS2}), we conclude that $r$ has the desired form. 
This shows (using the fact that the $\LL_{\ac}^\mathfrak{o}$-module $\textup{\textbf{KS}}(\FFF_+,\TT_{f,\alpha}^\ac)$ has also rank one and in fact, it is generated by the image $\pmb{\kappa}^{\ac}:=\pi^\ac(\pmb{\kappa})$ of $\pmb{\kappa}$) we have 
$$\pmb{\kappa}^{\BF}_{D_\infty}=\varpi^{m}u_0\cdot \pmb{\kappa}^{\ac}$$
where $u_0=\pi^{\ac}(u)\in \LL_\ac^{\mathfrak{o}}$ is a unit and  $\pmb{\kappa}^{\BF}_{D_\infty}=\pi^\ac(\pmb{\kappa}^{\BF}_{K_\infty})$. The conclusion of Corollary~\ref{cor:mainanticyclosharpdefinite} follows from \cite[Theorem 5.3.10(iii)]{mr02} (whose proof applies verbatim for the Selmer structure $\FFF_+$ used in place of the Selmer structure $\FFF_\LL$ in loc. cit.). 
\end{remark}

We next turn our attention to the indefinite case. We will show in this situation that X. Wan's main result in \cite{xinwanwanrankinselberg} may be utilized (under some technical assumptions) to upgrade the divisibility in (\ref{eqn:KLZplusminusmainconjintwovar}) to an equality up to powers of $\varpi$. Recall from \eqref{eq:WanLfunction} that for each $p$-distinguished character $\alpha$, we have Wan's $p$-adic $L$-function $\calL_{f,\alpha}^{\textup{Hida}}$.
\begin{proposition}
\label{prop:imageunderintegralcolemanmaps}
We have the divisibility
\begin{equation}
\label{eqn:KLZplusminusmainconjintwovar2}
\Char_{\LL^\mathfrak{o}}\left(\mathfrak{X}_{+-}(f\otimes\alpha/K_{\infty})\right)\otimes_{\LL_L} \RR_\Phi\,\,\,\Bigg{|}\,\,\,\left(\calL_{f,\alpha}^{\textup{Hida}}\right)\,.
\end{equation}
in $\RR_\Phi$. The divisibility may be upgraded to an equality if and only if we have equality in (\ref{eqn:KLZplusminusmainconjintwovar}) up to powers of $\varpi$.
\end{proposition}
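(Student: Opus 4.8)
The plan is to push the divisibility \eqref{eqn:KLZplusminusmainconjintwovar} of Theorem~\ref{thm:2varmainconjwithoutpadicL} through the integral Coleman map $\widetilde{\col}^{(2,\alpha)}$, using the second explicit reciprocity law \eqref{eq:WanLfunction} to convert the right-hand side of \eqref{eqn:KLZplusminusmainconjintwovar} into the ideal generated by Wan's $p$-adic $L$-function $\calL_{f,\alpha}^{\textup{Hida}}$. First I would record the structural input on $\widetilde{\col}^{(2,\alpha)}$: after enlarging $L$ and scaling by a power of $\varpi$ (which is immaterial once we extend scalars to $\RR_\Phi$), it is an injective $\LL^{\ooo}$-linear map
\[
\widetilde{\col}^{(2,\alpha)}\colon H^1_{\FFgr}(K_{\fp^c},\TT_{f,\alpha})\lra \LL^{\ooo}
\]
whose cokernel is pseudo-null up to a power of $\varpi$ (as for $\col^{(1)}$; cf.\ the discussion preceding \cite[Theorem~10.6.4]{KLZ2} and the remark following Corollary~\ref{cor:mainanticyclosharpdefinite}), and $\widetilde{\col}^{(2,\alpha)}\bigl(\res_{\fp^c}(\textup{BF}_{K_\infty})\bigr)=\calL_{f,\alpha}^{\textup{Hida}}$ by \eqref{eq:WanLfunction}. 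Since $\calL_{f,\alpha}^{\textup{Hida}}\neq 0$ we get $\res_{\fp^c}(\textup{BF}_{K_\infty})\neq 0$; as $\widetilde{\col}^{(2,\alpha)}$ is injective, this both forces $H^1_{\FFgr}(K_{\fp^c},\TT_{f,\alpha})$ to have $\LL^{\ooo}$-rank one and carries the cyclic submodule $\LL^{\ooo}\cdot\res_{\fp^c}(\textup{BF}_{K_\infty})$ isomorphically onto the free rank-one ideal $(\calL_{f,\alpha}^{\textup{Hida}})$. In particular the quotient
\[
M:=H^1_{\FFgr}(K_{\fp^c},\TT_{f,\alpha})\big/\LL^{\ooo}\cdot\res_{\fp^c}(\textup{BF}_{K_\infty})
\]
is an $\LL^{\ooo}$-torsion module.

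Next I would feed $\widetilde{\col}^{(2,\alpha)}$ into the snake lemma applied to the short exact sequence defining $M$, compared with $0\to(\calL_{f,\alpha}^{\textup{Hida}})\to\LL^{\ooo}\to\LL^{\ooo}/(\calL_{f,\alpha}^{\textup{Hida}})\to 0$. Since the left-hand vertical map is then an isomorphism onto $(\calL_{f,\alpha}^{\textup{Hida}})$ and the middle one is injective, I obtain an injection $M\hookrightarrow \LL^{\ooo}/(\calL_{f,\alpha}^{\textup{Hida}})$ with cokernel $\operatorname{coker}\widetilde{\col}^{(2,\alpha)}$, hence pseudo-null up to a power of $\varpi$. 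As pseudo-null modules are invisible to characteristic ideals and $-\otimes_{\LL^{\ooo}}\RR_\Phi$ annihilates $\varpi$-power torsion, this yields $\Char_{\LL^{\ooo}}(M)\otimes_{\LL^{\ooo}}\RR_\Phi=\bigl(\calL_{f,\alpha}^{\textup{Hida}}\bigr)$ in $\RR_\Phi$. Now tensoring the divisibility $\Char_{\LL^{\ooo}}\bigl(\mathfrak{X}_{+-}(f\otimes\alpha/K_{\infty})\bigr)\mid\Char_{\LL^{\ooo}}(M)$ of \eqref{eqn:KLZplusminusmainconjintwovar} with $\RR_\Phi$ — legitimate since $\mathfrak{X}_{+-}(f\otimes\alpha/K_{\infty})$ is $\LL^{\ooo}$-torsion by Theorem~\ref{thm:2varmainconjwithoutpadicL}(ii) — gives precisely \eqref{eqn:KLZplusminusmainconjintwovar2}.

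For the final ``if and only if'': the passage from \eqref{eqn:KLZplusminusmainconjintwovar} to \eqref{eqn:KLZplusminusmainconjintwovar2} loses precision only through the ``up to a power of $\varpi$'' in the computation of $\Char_{\LL^{\ooo}}(M)$ and through the functor $-\otimes_{\LL^{\ooo}}\RR_\Phi$; and since $\LL^{\ooo}$ is a regular local ring (in particular a UFD), $-\otimes_{\LL^{\ooo}}\RR_\Phi$ identifies two principal ideals exactly when their generators agree up to a unit of $\LL^{\ooo}$ and a power of $\varpi$. Hence equality in \eqref{eqn:KLZplusminusmainconjintwovar2} is equivalent to $\Char_{\LL^{\ooo}}\bigl(\mathfrak{X}_{+-}(f\otimes\alpha/K_{\infty})\bigr)=\Char_{\LL^{\ooo}}(M)$ up to a power of $\varpi$, i.e.\ to equality up to powers of $\varpi$ in \eqref{eqn:KLZplusminusmainconjintwovar}, which is the assertion. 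The step I expect to demand the most care is the bookkeeping of the $\varpi$-powers throughout, and in particular pinning down and using correctly the ``pseudo-null up to a power of $\varpi$'' property of $\operatorname{coker}\widetilde{\col}^{(2,\alpha)}$ — verifying that it is preserved by the snake lemma and that it interacts as expected with the passage to $\RR_\Phi$; the remaining ingredients are formal homological algebra together with the reciprocity law \eqref{eq:WanLfunction}.
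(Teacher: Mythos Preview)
Your proposal is correct and follows exactly the route the paper takes: the paper's proof is a single sentence invoking \eqref{eq:WanLfunction}, \eqref{eqn:KLZplusminusmainconjintwovar}, and the fact that $\operatorname{coker}\widetilde{\col}^{(2,\alpha)}$ is pseudo-null up to a power of $\varpi$, and you have simply unpacked that sentence via the snake lemma and the UFD property of $\LL^{\ooo}$. Your treatment of the ``if and only if'' clause is likewise the intended one.
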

\begin{proof}
This follows from \eqref{eq:WanLfunction}, (\ref{eqn:KLZplusminusmainconjintwovar}) and the fact that the cokernel of $\widetilde{\col}^{(2,\alpha)}$ is pseudo-null up to a power of $\varpi$.
\end{proof}
\begin{theorem}[Wan]
\label{thm:wanindefinitemain}
Suppose that $N$ is square free as well as that the hypothesis \textup{\textbf{(wt-2)}} holds true. We assume further that there exists a prime $q \mid N^-$ such that $\overline{\rho}_f$ is ramified at $q$. Then the ideal  $\Char_{\LL^\mathfrak{o}}\left(\mathfrak{X}_{+-}(f\otimes\alpha/K_{\infty})\right)\otimes_{\LL_L} \RR_\Phi$ is generated by $\calL_{f,\alpha}^{\textup{Hida}}$.
\end{theorem}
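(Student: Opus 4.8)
The plan is to combine the divisibility of Proposition~\ref{prop:imageunderintegralcolemanmaps} --- which is the Beilinson--Flach Euler system input, descended from \eqref{eqn:KLZplusminusmainconjintwovar} via the second reciprocity law --- with the opposite divisibility supplied by X.\ Wan's main result in \cite{xinwanwanrankinselberg}. Recall that the map $\widetilde{\col}^{(2,\alpha)}$ was normalized (through $I_\Ba^{-1}=(L^{\textup{Katz}}_\Ba)$) precisely so that, by \eqref{eq:WanLfunction}, $\widetilde{\col}^{(2,\alpha)}(\BF_{K_\infty})=\calL_{f,\alpha}^{\textup{Hida}}$ is Wan's $p$-adic $L$-function, and that (after the Nekov\'a\v{r}-style identifications recorded in the proof of Corollary~\ref{cor:mainanticyclosharpdefinite}, now with the filtration that is Greenberg at $\fp^c$ and strict at $\fp$ relaxed to the Panchishkin-type condition of $\FFF_{+-}$) the module $\mathfrak{X}_{+-}(f\otimes\alpha/K_\infty)$ matches the Pontryagin dual of the Greenberg Selmer group entering Wan's Iwasawa main conjecture for the Rankin--Selberg convolution of $f$ against the CM Hida family $\Bg_{\fm,\cP}$.

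First I would verify that our standing hypotheses together with the extra assumptions of the theorem --- $N$ square-free, \textbf{(wt-2)} (so that the weight-$2$, trivial-wild-character specialization of $\mathbf{f}$ is $p$-old and of trivial nebentype) and the existence of a prime $q\mid N^-$ at which $\overline{\rho}_f$ is ramified --- are exactly those under which the Eisenstein-ideal congruence argument of \cite{xinwanwanrankinselberg} applies, yielding
\[
\left(\calL_{f,\alpha}^{\textup{Hida}}\right)\,\,\Big{|}\,\,\Char_{\LL^\mathfrak{o}}\left(\mathfrak{X}_{+-}(f\otimes\alpha/K_{\infty})\right)\otimes_{\LL_L}\RR_\Phi
\]
as (characteristic, hence principal) ideals of $\RR_\Phi$; passing all the way to $\RR_\Phi$ both absorbs the Katz factor $L^{\textup{Katz}}_\Ba$ via $I_\Ba^{-1}=(L^{\textup{Katz}}_\Ba)$ and renders $\varpi$ a unit, so no $\varpi$-ambiguity remains.

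Then I would combine this with the reverse divisibility \eqref{eqn:KLZplusminusmainconjintwovar2} of Proposition~\ref{prop:imageunderintegralcolemanmaps}. The module $\mathfrak{X}_{+-}(f\otimes\alpha/K_\infty)$ is $\LL^\mathfrak{o}$-torsion by Theorem~\ref{thm:2varmainconjwithoutpadicL}(ii), so $\Char_{\LL^\mathfrak{o}}\left(\mathfrak{X}_{+-}(f\otimes\alpha/K_\infty)\right)\otimes_{\LL_L}\RR_\Phi$ is a nonzero principal ideal of $\RR_\Phi$; by \eqref{eqn:KLZplusminusmainconjintwovar2} it divides $(\calL_{f,\alpha}^{\textup{Hida}})$, which is therefore also nonzero (consistently with $\calL_{f,\alpha}^{\textup{Hida}}=L^{\textup{Katz}}_\Ba\cdot\mathfrak{L}_{f,\Sigma^{(2)}}^{(\alpha)}$ and the non-triviality of $\mathfrak{L}_{f,\Sigma^{(2)}}^{(\alpha)}$ verified in the proof of Theorem~\ref{thm:2varmainconjwithoutpadicL}). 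Since $\RR_\Phi$ is a localization of the regular local ring $\LL^{\ooo}$ and therefore a domain, two principal ideals that divide one another coincide; hence $\Char_{\LL^\mathfrak{o}}\left(\mathfrak{X}_{+-}(f\otimes\alpha/K_{\infty})\right)\otimes_{\LL_L}\RR_\Phi=\left(\calL_{f,\alpha}^{\textup{Hida}}\right)$, which is the assertion. Equivalently, this upgrades the divisibility in \eqref{eqn:KLZplusminusmainconjintwovar} to an equality up to powers of $\varpi$, which by the last sentence of Proposition~\ref{prop:imageunderintegralcolemanmaps} is the same statement.

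The main obstacle is the first step: the faithful translation between the conventions of \cite{xinwanwanrankinselberg} and ours. One must check that Wan's Selmer group and his $p$-adic $L$-function really are $\mathfrak{X}_{+-}(f\otimes\alpha/K_\infty)$ and $\calL_{f,\alpha}^{\textup{Hida}}$ as set up in Section~\ref{subsec:twovarmainconj} --- matching the Greenberg local conditions at $\fp$ and $\fp^c$, the self-dual central-critical twist, the $\alpha$-isotypic projection along $\Delta$, and Wan's normalization relative to the Hida--Perrin-Riou and Katz $p$-adic $L$-functions --- and that \textbf{(wt-2)} together with the ramification hypothesis on $\overline{\rho}_f$ at $q\mid N^-$ are precisely what is required to run his Eisenstein-ideal argument. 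Once this dictionary is in place, the remainder is formal.
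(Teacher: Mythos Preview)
The paper does not provide a proof of this theorem: it is stated and attributed directly to Wan \cite{xinwanwanrankinselberg}, with no proof environment following the statement. Your proposal correctly reconstructs the architecture behind the equality --- one divisibility is the Euler-system input recorded as Proposition~\ref{prop:imageunderintegralcolemanmaps} (i.e., \eqref{eqn:KLZplusminusmainconjintwovar2}), and the reverse divisibility is Wan's Eisenstein--Klingen lower bound from \cite{xinwanwanrankinselberg}; combining them in the domain $\RR_\Phi$ yields the stated equality. This is exactly the picture the authors sketch in the introduction for how the two-variable main conjecture is established in the indefinite case.

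Your caveat about the dictionary --- matching the Selmer group $\mathfrak{X}_{+-}$ and the $p$-adic $L$-function $\calL_{f,\alpha}^{\textup{Hida}}$ with Wan's objects, and checking that \textbf{(wt-2)} together with the ramification hypothesis at $q\mid N^-$ are precisely the hypotheses under which Wan's argument runs --- is indeed the substantive content, and the paper does not spell it out; the authors simply record Wan's output in their own notation. So your proposal goes somewhat beyond the paper (which treats the theorem as a black-box citation), but your outline of what such a proof must contain is accurate.
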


\begin{corollary}
\label{cor:mainconjequalityindefinitecase}
Suppose that $N^-$ is a product of even number of primes and $p\nmid N\rm{disc}(K/\QQ)$ (so that the condition (\textup{Sign }$-$) holds true). Assume in addition that $N$ is square free and that the hypothesis \textup{\textbf{(wt-2)}} is valid. Suppose further that there exists a prime $q \mid N^-$ such that $\overline{\rho}_f$ is ramified at $q$. Then the divisibility in the statement of Theorem~\ref{thm:mainconjwithoutpadicLfunc} may be upgraded to an equality up to powers of $\varpi$.
\end{corollary}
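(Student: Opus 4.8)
The plan is to deduce this from X.~Wan's two-variable result (Theorem~\ref{thm:wanindefinitemain}) by the same Iwasawa descent used to prove Corollary~\ref{cor:mainanticyclosharpdefinite} in the definite case; indeed, the remark following that corollary already announces that its proof carries over verbatim to the present setting. First I would upgrade the two-variable divisibility. The hypotheses of the present corollary ($N$ square-free, \textbf{(wt-2)}, and a prime $q\mid N^-$ at which $\overline{\rho}_f$ is ramified) are precisely those of Theorem~\ref{thm:wanindefinitemain}, which therefore asserts that $\Char_{\LL^\mathfrak{o}}\left(\mathfrak{X}_{+-}(f\otimes\alpha/K_\infty)\right)\otimes_{\LL_L}\RR_\Phi$ is generated by $\calL_{f,\alpha}^{\textup{Hida}}$, i.e. that the divisibility \eqref{eqn:KLZplusminusmainconjintwovar2} is an equality. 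By the second sentence of Proposition~\ref{prop:imageunderintegralcolemanmaps} this forces equality up to powers of $\varpi$ in \eqref{eqn:KLZplusminusmainconjintwovar}, and then the last sentence of Theorem~\ref{thm:2varmainconjwithoutpadicL}(i) transfers this to equality up to powers of $\varpi$ in \eqref{eqn:KLZplusmainconjintwovar}, that is, to the two-variable $\FFF_+$-main conjecture for $f\otimes\alpha$ along $K_\infty$.

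With the two-variable $\FFF_+$-equality in hand, I would run the descent of Corollary~\ref{cor:mainanticyclosharpdefinite} word for word. Form the Selmer complexes $\widetilde{C}^\bullet_{\textup{f},\Iw}(K_\infty/K,T_{f,\alpha};\Delta_+)$ and the $\LL$-modules $\mathfrak{H}_+^1(K_\infty)=\widetilde{H}^1_{\textup{f},\Iw}(K_\infty/K,T_{f,\alpha},\Delta_+)\big/\LL^\mathfrak{o}\cdot\textup{BF}_{K_\infty}$ and $\mathfrak{H}_+^2(K_\infty)=\widetilde{H}^2_{\textup{f},\Iw}(K_\infty/K,T_{f,\alpha},\Delta_+)^\iota$, together with their $D_\infty$-analogues, and identify (via \cite[8.9.6.2, 9.6.3, 9.6.6(iii)]{nekovar06}) $\mathfrak{H}_+^2(K_\infty)$ with $\mathfrak{X}_+(f\otimes\alpha/K_\infty)$ up to the involution $\iota$, so that the equality just obtained becomes $\Char_\LL(\mathfrak{H}_+^2(K_\infty))\dot{=}\Char(\mathfrak{H}_+^1(K_\infty))$ as in \eqref{eqn:mainconjwithoutpadicLrestatement}. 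Nekov\'a\v{r}'s control theorem then supplies the exact sequence \eqref{eqn:nekcontrol1} and the isomorphism \eqref{eqn:nekcontrol2}; these formal inputs are insensitive to the sign of the functional equation.

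It then remains to check that the descent error terms are non-degenerate, and this is the only place where the indefinite hypotheses enter -- but they do so through Theorem~\ref{thm:mainconjwithoutpadicLfunc}, whose statement already covers the case $(-)$. As in the definite case, \eqref{eqn:nekcontrol2} together with Theorem~\ref{thm:mainconjwithoutpadicLfunc} (which gives that $\mathfrak{X}_+(f\otimes\alpha/D_\infty)$ is torsion) shows that $\gamma_\cyc-1$ does not divide $\Char_{\LL^\mathfrak{o}}(\mathfrak{H}_+^1(K_\infty))=\Char_{\LL^\mathfrak{o}}(\mathfrak{H}_+^2(K_\infty))$, whence $\Char(\mathfrak{H}_+^2(K_\infty)[\gamma_\cyc-1])\neq 0$ by \cite[Lemme 4, \S1.3]{pr84}, while torsion-freeness of $\widetilde{H}^1_{\textup{f},\Iw}(K_\infty/K,T_{f,\alpha},\Delta_+)$ (a consequence of \textbf{(H.Im.)}) and \cite[Lemma 6.5]{pollackrubin04} give $\mathfrak{H}_+^1(K_\infty)[\gamma_\cyc-1]=0$; feeding these into \eqref{eqn:nekcontrol1}--\eqref{eqn:nekcontrol2} exactly as in \eqref{alignedequalities1}--\eqref{alignedequalities4} yields $\Char(\mathfrak{H}_+^2(D_\infty))\dot{=}\Char(\mathfrak{H}_+^1(D_\infty))$, which is precisely the asserted equality up to powers of $\varpi$ in Theorem~\ref{thm:mainconjwithoutpadicLfunc}. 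I do not anticipate a genuine obstacle, since the argument is a reduction to results already in place; the point requiring the most care is the bookkeeping at the first step -- descending Wan's statement, which lives in $\RR_\Phi$, to an integral statement over $\LL^\mathfrak{o}$ correct up to powers of $\varpi$ (conveniently already packaged as Proposition~\ref{prop:imageunderintegralcolemanmaps}), and confirming that passing between the $\FFF_{+-}$- and $\FFF_+$-formulations via Theorem~\ref{thm:2varmainconjwithoutpadicL}(i) preserves the ``up to $\varpi$'' qualifier. One could alternatively phrase the whole descent in terms of $\LL^\mathfrak{o}$-adic Kolyvagin systems for $\FFF_+$, as in the remark after Corollary~\ref{cor:mainanticyclosharpdefinite}, under the additional Tamagawa-factor hypothesis mentioned there.
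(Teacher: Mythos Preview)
Your proposal is correct and follows essentially the same route as the paper: combine Theorem~\ref{thm:wanindefinitemain} with Proposition~\ref{prop:imageunderintegralcolemanmaps} and Theorem~\ref{thm:2varmainconjwithoutpadicL}(i) to obtain equality up to powers of $\varpi$ in \eqref{eqn:KLZplusmainconjintwovar}, then invoke verbatim the descent argument of Corollary~\ref{cor:mainanticyclosharpdefinite}. You have in fact spelled out the descent in more detail than the paper does (it simply refers back to the proof of Corollary~\ref{cor:mainanticyclosharpdefinite}), and your closing remark about the Kolyvagin-system alternative mirrors the paper's own Remark.
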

\begin{proof}
Combining Proposition~\ref{prop:imageunderintegralcolemanmaps} and  Theorems \ref{thm:2varmainconjwithoutpadicL}(i) and \ref{thm:wanindefinitemain}, we have equality in (\ref{eqn:KLZplusminusmainconjintwovar}), and therefore also in (\ref{eqn:KLZplusmainconjintwovar}). The proof now follows arguing exactly as in the proof of Corollary~\ref{cor:mainanticyclosharpdefinite}.
\end{proof}
\subsection{$\LL_{\ac}^\mathfrak{o}$-adic height pairing and a Rubin-style formula}
\label{subsec:rubinsformula}
Throughout this section we shall assume (in addition to the hypotheses we have recorded at the start of Section~\ref{subsec:BFlocally}) that $N^-$ is a square-free product of even number of primes (so that the condition (\textup{Sign }$-$) holds true). For every $n$, we let 
$$\mathfrak{h}^{(n)}_p:  H^1_{\FFgr}(D_n,T_{f,\alpha})\times H^1_{\FFgr}(D_n,T_{f,\alpha^{-1}})^\iota \lra \mathfrak{o}$$
denote the Nekov\'a\v{r}'s $p$-adic height pairing over the field $D_n$, defined as in \cite{nekovarheightpairings}. Compatibility of this pairing with restriction and corestriction maps allows us to define its $\LL_{\ac}^\mathfrak{o}$-adic lifting as follows:
\begin{defn}
\label{def:anticycloheight}
We define the $\LL_{\ac}^\mathfrak{o}$-adic height pairing
$$\mathfrak{h}^\ac_p: H^1_{\FFgr}(K,\TT_{f,\alpha}^\ac)\otimes H^1_{\FFgr}(K,\TT_{f,\alpha^{-1}}^{\ac})^\iota \lra \LL_{\ac}^\mathfrak{o}$$
by setting
$$\mathfrak{h}^\ac_p(\mathfrak{x},\mathfrak{y}):=\varprojlim \sum_{\gamma\in \Gamma^\ac_n}\mathfrak{h}^{(n)}_p\left(\pi_n\left(\mathfrak{x}\right),\pi_n\left(\mathfrak{y}\right)^\gamma\right)\cdot\gamma\,\, \in\,\, \LL_{\ac}^\mathfrak{o}$$
where  $\pi_n:  H^1_{\FFgr}(K,\TT_{f,\beta}^\ac) \ra  H^1_{\FFgr}(D_n,T_{f,\beta})$
is the obvious map for $\beta=\alpha,\alpha^{-1}$.
\end{defn}
The $\LL_{\ac}^\mathfrak{o}$-adic regulator $\mathfrak{Reg}^\ac \in \LL_{\ac}^{\mathfrak{o}}$ is defined as the characteristic ideal of the cokernel of $\mathfrak{h}_p^\ac$.
\begin{defn} Define the $\LL_{\ac}^\mathfrak{o}$-adic Tate pairing
$$\langle\,,\,\rangle_{\textup{Tate}}:\,H^1_{+/\f}(K_p,\TT^\ac_{f,\alpha})\otimes H^1_{\FFgr}(K_{\fp^c},\TT_{f,\alpha^{-1}}^\ac)\stackrel{\sim}{\lra} \LL_{\ac}^\mathfrak{o}$$
by setting 
$$\langle\mathfrak{a},\mathfrak{b}\rangle_{\textup{Tate}}:=\varprojlim \sum_{\gamma\in \Gamma^\ac_n}\langle\pi_n(\mathfrak{a})^\gamma,\pi_n(\mathfrak{b})\rangle_{n}\cdot\gamma^{-1}$$
where $\langle\,,\,\rangle_{n}: H^1_{+/\f}(D_{n,\fp},T_{f,\alpha})\otimes H^1_{\FFgr}(D_{n,\fp^c},T_{f,\alpha^{-1}})\ra \mathfrak{o}$ is the usual local Tate pairing. In order to help with our notation here, we would like to remark the natural identification 
$$H^1_{+/\f}(K_p,\TT_{f,\alpha}^\ac)\stackrel{\sim}{\lra} H^1_s(K_{\fp^c},\TT_{f,\alpha}^\ac):=H^1(K_{\fp^c},\TT_{f,\alpha}^\ac)/H^1_{\FFgr}(K_{\fp^c},\TT_{f,\alpha}^\ac)\,.$$
\end{defn}

\begin{theorem}[Rubin-style formula]
\label{thm:rubinstyleformula}Let $\textup{BF}_{K_\infty}\in H^1(K,\TT_{f,\alpha})$ denote the tower of Beilinson-Flach elements along the maximal $\ZZ_p$-power extension $K_\infty$ of $K$.\\
\textup{\bf{(i)}} There exists a unique element 
$$\mathfrak{d}\textup{BF}_{K_\infty} \in H^1_{+/f}(K_p,\TT_{f,\alpha})\otimes\QQ_p$$ 
(the cyclotomic derivative of the Beilinson-Flach element) with the property that
$$\res_{+/\f}\left(\textup{BF}_{K_\infty}\right)=\frac{(\gamma_\cyc-1)}{\log_p\chi_\cyc(\gamma_\cyc)}\cdot\mathfrak{d}\textup{BF}_{K_\infty}\,.$$
\\\textup{\bf{(ii)}} Let $\mathfrak{d}\textup{BF}_{D_\infty}\in H^1_{+/\f}(K,\TT_{f,\alpha}^\ac)$ denote the image of $\mathfrak{d}\textup{BF}_{K_\infty}$. Then,
$$\mathfrak{h}^\ac_p\left(\textup{BF}_{D_\infty},\mathfrak{y}\right)=-\langle \mathfrak{d}\textup{BF}_{D_\infty},\res_{\fp^c}\left(\mathfrak{y}\right)\rangle_{\textup{Tate}}\,.$$
\end{theorem}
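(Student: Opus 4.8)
The plan is to deduce this "Rubin-style formula" from the defining properties of Nekov\'a\v{r}'s $p$-adic height pairing, exactly as in the classical Rubin formula for Heegner points (cf. \cite{nekovarheightpairings}) and its $\Lambda$-adic refinements. The key input is the compatibility of the cyclotomic-direction derivative of an Iwasawa cohomology class (here $\textup{BF}_{K_\infty}$, which lies in the plus-Selmer module by Theorem~\ref{thm:explicitreciprocity}) with the construction of $\mathfrak{h}^\ac_p$ out of the Bockstein map attached to the extension $0 \to \LL_{\ac}^\mathfrak{o} \to \LL^\mathfrak{o}/(\gamma_\cyc-1)^2 \to \LL_{\ac}^\mathfrak{o} \to 0$.

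For part (i): by Theorem~\ref{thm:mainconjtakeone}(ii) (together with Corollary~\ref{cor:cyclotomicderivative} and the first reciprocity law (\ref{eqn:firstreciprocitylaw}), which force $\mathfrak{L}_{f,0}^{(\alpha)}=0$), the image $\res_{+/\f}(\textup{BF}_{D_\infty})$ of the anticyclotomic specialization vanishes inside $H^1_{+/\f}(K_p,\TT_{f,\alpha}^\ac)$. Equivalently, $\res_{+/\f}(\textup{BF}_{K_\infty})$ lies in the kernel of the augmentation $\pi^\ac$, hence is divisible by $(\gamma_\cyc-1)$ in $H^1_{+/\f}(K_p,\TT_{f,\alpha})\otimes\QQ_p$; since this module is $\LL^\mathfrak{o}$-torsion-free after inverting $p$ (using \textbf{(H.Im.)} and $H^0(K_p,F^-\overline T_{f,\alpha})=0$ when $k=2$, as in the proof of Proposition~\ref{prop:torsionsubmodulepofXminus}), the quotient $\mathfrak{d}\textup{BF}_{K_\infty}:=\res_{+/\f}(\textup{BF}_{K_\infty})/(\gamma_\cyc-1)\cdot\log_p\chi_\cyc(\gamma_\cyc)$ is well-defined and unique. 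I would note in passing that this element is, up to the normalizing factor, exactly the leading term of $\col^{(1,\alpha)}(\textup{BF}_{K_\infty})=\mathfrak{L}_f^{(\alpha)}$ in the cyclotomic direction, i.e. it maps under $\col^{(1,\alpha)}$ to $\al_{f,1}^{(\alpha)}$ modulo $(\gamma_\cyc-1)$.

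For part (ii): this is where the real work lies. By the construction of $\mathfrak{h}^\ac_p$ in Definition~\ref{def:anticycloheight} and Nekov\'a\v{r}'s description of his height pairing via Bockstein homomorphisms, $\mathfrak{h}^\ac_p(\textup{BF}_{D_\infty},\mathfrak{y})$ is computed by lifting $\textup{BF}_{D_\infty}\in H^1_{\FFgr}(K,\TT_{f,\alpha}^\ac)$ to a class over the larger ring $\LL^\mathfrak{o}/(\gamma_\cyc-1)^2$, applying the connecting map, and pairing against $\mathfrak{y}$ via global Poitou--Tate duality. The canonical such lift is $\textup{BF}_{K_\infty}$ itself — but it is \emph{not} globally in $H^1_{\FFgr}$, only in $H^1_{\FFF_+}$: its $\fp^c$-localization is Greenberg while its $\fp$-localization is not (it lands in $F^-$). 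The failure to be Greenberg at $\fp$ is precisely measured by $\res_{+/\f}(\textup{BF}_{K_\infty}) = (\gamma_\cyc-1)\cdot(\log_p\chi_\cyc(\gamma_\cyc))^{-1}\cdot\mathfrak{d}\textup{BF}_{K_\infty}$ from part (i). Feeding this into Nekov\'a\v{r}'s Bockstein formalism, the height $\mathfrak{h}^\ac_p(\textup{BF}_{D_\infty},\mathfrak{y})$ collapses to a purely \emph{local} term at $p$: the cup product of the error class $\mathfrak{d}\textup{BF}_{D_\infty}\in H^1_{+/\f}(K_p,\TT_{f,\alpha}^\ac)\cong H^1_s(K_{\fp^c},\TT_{f,\alpha}^\ac)$ against $\res_{\fp^c}(\mathfrak{y})\in H^1_{\FFgr}(K_{\fp^c},\TT_{f,\alpha^{-1}}^\ac)$ under the local Tate pairing $\langle\,,\,\rangle_{\textup{Tate}}$. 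The sign $-1$ arises from the antisymmetry convention in the Bockstein and the ordering of the two decomposition groups $\fp,\fp^c$. This is the standard "local = global" mechanism behind every Rubin formula; the adaptation needed here is to carry it out $\LL_{\ac}^\mathfrak{o}$-adically, which is handled by the compatibility of $\mathfrak{h}^{(n)}_p$ with norm maps that is built into Definition~\ref{def:anticycloheight} and by Nekov\'a\v{r}'s Iwasawa-theoretic version of his height pairing in \cite[\S11]{nekovar06}.

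\textbf{Main obstacle.} The delicate point is bookkeeping: making precise that the canonical lift of $\textup{BF}_{D_\infty}$ to $\LL^\mathfrak{o}/(\gamma_\cyc-1)^2$ is $\textup{BF}_{K_\infty}\bmod(\gamma_\cyc-1)^2$, that its \emph{only} local defect (with respect to the Greenberg condition, relative to which the height is defined) is at $\fp$ and equals the class from part (i), and that Nekov\'a\v{r}'s Bockstein-theoretic height pairing indeed reduces in this situation to the single local Tate pairing term with the correct sign and normalization — including matching the twist by $\iota$ and the factor $\log_p\chi_\cyc(\gamma_\cyc)$ that appears in the normalization of $\mathfrak{d}\textup{BF}_{K_\infty}$. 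I would organize this by first recalling Nekov\'a\v{r}'s formula for $\mathfrak{h}_p$ in terms of the differential of the Iwasawa-theoretic Selmer complex, then identifying the relevant connecting map with $\res_{+/\f}$, and finally invoking global duality (Poitou--Tate, as used repeatedly in Theorem~\ref{thm:mainconjtakeone}) to convert the global cup product into the asserted local pairing.
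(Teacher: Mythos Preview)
Your proposal is correct and follows essentially the same approach as the paper. The paper's proof is considerably terser: for (i) it simply cites the hypothesis $\epsilon(f/K)=-1$ together with Theorem~\ref{thm:explicitreciprocity}(i) (whose proof is exactly the Corollary~\ref{cor:cyclotomicderivative} + first reciprocity law argument you reproduce), and for (ii) it invokes Nekov\'a\v{r}'s general result \cite[Proposition~11.3.15]{nekovar06} directly rather than unpacking the Bockstein mechanism as you do. One point you pass over but the paper flags explicitly: the reduction to a \emph{single} local term at $p$ in Nekov\'a\v{r}'s formula requires that the contributions at primes away from $p$ vanish, which the paper attributes to the local Langlands correspondence; your sketch implicitly assumes this when you say the height ``collapses to a purely local term at $p$''.
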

\begin{proof}
The first portion holds true thanks to our assumption that $\epsilon(f/K)=-1$ and Theorem~\ref{thm:explicitreciprocity}. The second part is a formal consequence of Nekov\'a\v{r}'s general result \cite[Proposition 11.3.15]{nekovar06}  applied along the anticyclotomic Iwasawa tower (where the vanishing of the terms in Nekov\'a\v{r}'s formula at primes away from $p$ follow from the local Langlands correspondence). We further note that the construction of the $p$-adic height pairing in loc. cit. compares to its more classical versions in \cite{nekovarheightpairings} via \cite[\S 11.3]{nekovar06}.
 \end{proof}
 
 \begin{proposition}
 \label{prop:thederivativemapstoderivative}
 The restriction of the Coleman-Perrin-Riou map $\col^{(1,\alpha)}$ (that we introduced in Section~\ref{subsec:BFlocally}) to the anticyclotomic tower sends $\mathfrak{d}\textup{BF}_{D_\infty}$ to $\al_{f,1}^{(\alpha)}$.
 \end{proposition}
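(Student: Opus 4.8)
The plan is to feed the Rubin-style formula of Theorem~\ref{thm:rubinstyleformula}(i) through the Coleman-Perrin-Riou map $\col^{(1,\alpha)}$, combine it with the first reciprocity law (\ref{eqn:firstreciprocitylaw}) and the vanishing $\al_{f,0}^{(\alpha)}=0$ of Corollary~\ref{cor:cyclotomicderivative}, and then divide out one factor of $\gamma_\cyc-1$ before specializing to the anticyclotomic line. First I would recall, as in the proof of Theorem~\ref{thm:explicitreciprocity}, the identification
$$H^1\left(K_\fp,\,T_{f,\alpha}/F^+T_{f,\alpha}\otimes\LL^\iota\right)\;\cong\;\frac{H^1(K_\fp,\TT_{f,\alpha})}{H^1_{\FFgr}(K_\fp,\TT_{f,\alpha})}\;=\;H^1_{+/\f}(K_p,\TT_{f,\alpha}),$$
under which $\col^{(1,\alpha)}$ becomes an $\RR_L$-linear map $H^1_{+/\f}(K_p,\TT_{f,\alpha})\to\RR_L$ that, being built out of the $p$-adic reciprocity maps of \cite{KLZ2,LLZ2}, is compatible with the anticyclotomic specialization $\pi^\ac\colon\RR_L\to\RR_L^\ac$; that is, it intertwines the projection $H^1_{+/\f}(K_p,\TT_{f,\alpha})\twoheadrightarrow H^1_{+/\f}(K_p,\TT_{f,\alpha}^\ac)$ with $\pi^\ac$. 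Applying $\col^{(1,\alpha)}$ to the identity $\res_{+/\f}(\BF_{K_\infty})=\frac{\gamma_\cyc-1}{\log_p\chi_\cyc(\gamma_\cyc)}\cdot\mathfrak{d}\BF_{K_\infty}$ of Theorem~\ref{thm:rubinstyleformula}(i), and using (\ref{eqn:firstreciprocitylaw}) in the form $\col^{(1,\alpha)}(\res_{+/\f}(\BF_{K_\infty}))=\al_f^{(\alpha)}$, yields
$$\al_f^{(\alpha)}\;=\;\frac{\gamma_\cyc-1}{\log_p\chi_\cyc(\gamma_\cyc)}\cdot\col^{(1,\alpha)}\left(\mathfrak{d}\BF_{K_\infty}\right)\qquad\text{in }\RR_L.$$

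Since we are in the case $\epsilon(f/K)=-1$, Corollary~\ref{cor:cyclotomicderivative} gives $\al_{f,0}^{(\alpha)}=0$, so the Taylor expansion defining $\al_{f,1}^{(\alpha)}$ reads $\al_f^{(\alpha)}\equiv\al_{f,1}^{(\alpha)}\cdot\frac{\gamma_\cyc-1}{\log_p\chi_\cyc(\gamma_\cyc)}\pmod{(\gamma_\cyc-1)^2}$. Comparing this with the previous display shows $\frac{\gamma_\cyc-1}{\log_p\chi_\cyc(\gamma_\cyc)}\cdot\left(\col^{(1,\alpha)}(\mathfrak{d}\BF_{K_\infty})-\al_{f,1}^{(\alpha)}\right)\in(\gamma_\cyc-1)^2\RR_L$; as $\RR_L$ is an integral domain and $\log_p\chi_\cyc(\gamma_\cyc)$ is a unit in it, cancelling one factor of $\gamma_\cyc-1$ gives $\col^{(1,\alpha)}(\mathfrak{d}\BF_{K_\infty})\equiv\al_{f,1}^{(\alpha)}\pmod{\gamma_\cyc-1}$. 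Projecting to $\RR_L^\ac=\RR_L/(\gamma_\cyc-1)$: the right-hand side is $\al_{f,1}^{(\alpha)}$ itself, which is already anticyclotomic, while by the compatibility of $\col^{(1,\alpha)}$ with $\pi^\ac$ and the definition of $\mathfrak{d}\BF_{D_\infty}$ as the image of $\mathfrak{d}\BF_{K_\infty}$, the left-hand side is $\col^{(1,\alpha)}(\mathfrak{d}\BF_{D_\infty})$. This is the asserted equality.

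The only delicate points, neither of which is a genuine obstacle, are the bookkeeping needed to confirm that $\res_{+/\f}(\BF_{K_\infty})$ and $\mathfrak{d}\BF_{K_\infty}$ land in the source of $\col^{(1,\alpha)}$ under the displayed identification, and the verification that $\col^{(1,\alpha)}$ is compatible with $\pi^\ac$; both are standard features of the Perrin-Riou formalism of \cite{KLZ2,LLZ2}. No input beyond Theorems~\ref{thm:rubinstyleformula} and \ref{thm:explicitreciprocity}, the reciprocity law (\ref{eqn:firstreciprocitylaw}), and Corollary~\ref{cor:cyclotomicderivative} is needed.
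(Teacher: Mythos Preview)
Your argument is correct and is precisely what the paper's one-line proof (``This follows from definitions and the explicit reciprocity law'') has in mind: you apply $\col^{(1,\alpha)}$ to the defining relation of $\mathfrak{d}\BF_{K_\infty}$, invoke (\ref{eqn:firstreciprocitylaw}), compare with the Taylor expansion of $\al_f^{(\alpha)}$ using $\al_{f,0}^{(\alpha)}=0$, divide by the regular element $\gamma_\cyc-1$, and project to the anticyclotomic line. There is no difference in approach, only in the level of detail.
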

 \begin{proof}
 This follows from definitions and the explicit reciprocity law.
 \end{proof}
\begin{theorem}
\label{thm:torsionpartGreenberg} If we assume that $p\nmid N\rm{disc}(K/\QQ)$, then we have a containment
$$\al_{f,1}^{(\alpha)}\in \mathfrak{Reg}_{\ac}\cdot\Char\left(\mathfrak{X}(f\otimes\alpha/D_\infty)_{\textup{tor}}\right)\otimes_{\LL_\ac^\mathfrak{o}} \RR_L^\ac\,.$$ If we assume in addition that
\begin{itemize}
\item $N$ is square free;
\item the hypothesis \textup{\textbf{(wt-2)}} holds true;
\item there exists a prime $q \mid N^-$ such that $\overline{\rho}_f$ is ramified at $q$,
\end{itemize}
then the ideal $\mathfrak{Reg}_{\ac}\cdot\Char\left(\mathfrak{X}(f\otimes\alpha/D_\infty)_{\textup{tor}}\right)\subset \RR_L^\ac$ is in fact generated by $\al_{f,1}^{(\alpha)}\,.$
\end{theorem}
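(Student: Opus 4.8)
The plan is to combine the Rubin-style formula of Theorem~\ref{thm:rubinstyleformula} with the divisibility results from the Euler system machine (Theorem~\ref{thm:mainconjtakeone}) and the global duality computations of Section~\ref{subsec:ESargument}, upgrading to an equality in the indefinite case via Wan's main conjecture (Corollary~\ref{cor:mainconjequalityindefinitecase}). First, I would unwind the definition of $\mathfrak{Reg}_\ac$: by Theorem~\ref{thm:mainconjtakeone}(ii) both $H^1_{\FFgr}(K,\TT_{f,\alpha}^\ac)$ and $H^1_{\FFgr}(K,\TT_{f,\alpha^{-1}}^\ac)$ are of rank one over $\LL_\ac^\mathfrak{o}$, so the $\LL_\ac^\mathfrak{o}$-adic height pairing $\mathfrak{h}_p^\ac$ has a $1\times 1$ ``matrix'', and its regulator is $\mathfrak{h}_p^\ac(\textup{BF}_{D_\infty},\mathfrak{y}_0)$ up to the indices $[H^1_{\FFgr}(K,\TT_{f,\alpha}^\ac):\LL_\ac^\mathfrak{o}\cdot\textup{BF}_{D_\infty}]$ and the analogous index for a generator $\mathfrak{y}_0$ of the second Greenberg Selmer group. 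By Theorem~\ref{thm:rubinstyleformula}(ii) this equals $-\langle\mathfrak{d}\textup{BF}_{D_\infty},\res_{\fp^c}(\mathfrak{y}_0)\rangle_{\textup{Tate}}$ up to those indices.

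Next I would feed in the Coleman map computations. By Proposition~\ref{prop:thederivativemapstoderivative}, $\col^{(1,\alpha)}$ sends $\mathfrak{d}\textup{BF}_{D_\infty}$ to $\al_{f,1}^{(\alpha)}$, while $\col^{(1,\alpha)}$ is injective with pseudo-null cokernel (up to powers of $\varpi$), so identifying $H^1_{+/\f}(K_p,\TT_{f,\alpha}^\ac)\cong H^1_s(K_{\fp^c},\TT_{f,\alpha}^\ac)$ translates the Tate pairing into an identity in $\RR_L^\ac$: up to a power of $\varpi$,
\begin{equation*}
\al_{f,1}^{(\alpha)}\,\dot{=}\,\mathfrak{Reg}_\ac\cdot\frac{\Char\left(H^1_{\FFgr}(K_{\fp^c},\TT_{f,\alpha}^\ac)/\res_{\fp^c}(\textup{BF}_{D_\infty}\text{-part})\right)}{\Char\left(\text{index terms}\right)}.
\end{equation*}
The bookkeeping here is the heart of the matter: one must keep track of (a) the defect between $\LL_\ac^\mathfrak{o}\cdot\textup{BF}_{D_\infty}$ and $H^1_{\FFF_+}(K,\TT_{f,\alpha}^\ac)$, which by Theorem~\ref{thm:mainconjtakeone}(i) and Proposition~\ref{prop:torsionsubmodulepofXminus} is related to $\Char(\mathfrak{X}_+(f\otimes\alpha/D_\infty))=\Char(\mathfrak{X}_-(f\otimes\alpha/D_\infty)_{\textup{tor}})$ up to $\varpi$; (b) the ideal $\mathfrak{l}_-$ of Definition~\ref{def:minusregulator}, which enters the Tate-pairing comparison and which by Lemma~\ref{lem:PTdualityandtorsionsubmodules} satisfies $\Char(\mathfrak{X}_-(\cdot)_{\textup{tor}})=\Char(\mathfrak{X}(\cdot)_{\textup{tor}})\cdot\mathfrak{l}_-$; and (c) the contribution of $\res_{\fp^c}(\textup{BF}_{D_\infty})$ versus $\res_{\fp^c}(H^1_{\FFgr}(K,\TT_{f,\alpha}^\ac))$. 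Assembling these via the exact sequences of Section~\ref{subsec:ESargument}, the $\mathfrak{l}_-$ factor appearing in the regulator side cancels against the one relating $\mathfrak{X}_-$ to $\mathfrak{X}$, leaving exactly $\mathfrak{Reg}_\ac\cdot\Char(\mathfrak{X}(f\otimes\alpha/D_\infty)_{\textup{tor}})$ dividing $\al_{f,1}^{(\alpha)}$ (up to $\varpi$, hence after $\otimes\RR_L^\ac$ an honest containment).

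For the second statement, the divisibility in Theorem~\ref{thm:mainconjwithoutpadicLfunc} becomes an equality up to powers of $\varpi$ by Corollary~\ref{cor:mainconjequalityindefinitecase} under the extra hypotheses ($N$ square free, \textbf{(wt-2)}, and $\overline{\rho}_f$ ramified at some $q\mid N^-$), and feeding this sharpened input back through the chain of indices above converts every ``$\dot=$'' into an equality of ideals in $\RR_L^\ac$. I expect the main obstacle to be the precise matching of indices in step (b)--(c): one must verify that Nekov\'a\v{r}'s height pairing, the local Tate pairing, and the Coleman map are all normalized compatibly along the anticyclotomic tower so that the various characteristic ideals of cokernels/torsion submodules combine without a spurious extra factor — in particular, checking that the identification $H^1_{+/\f}(K_p,\TT_{f,\alpha}^\ac)\cong H^1_s(K_{\fp^c},\TT_{f,\alpha}^\ac)$ intertwines $\langle\,,\,\rangle_{\textup{Tate}}$ with the local pairing used in Nekov\'a\v{r}'s formula, and that the rank-one freeness claims (from \textbf{(H.Im.)} and the torsion-freeness of $H^1(K_p,F^-T_{f,\alpha})$) genuinely eliminate pseudo-null ambiguities. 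Once those compatibilities are in place, the argument is a formal descent along the lines of the proof of Corollary~\ref{cor:mainanticyclosharpdefinite}.
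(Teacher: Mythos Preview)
Your proposal is correct and follows essentially the same route as the paper: derive the identity $\mathfrak{L}_{f,1}^{(\alpha)}\cdot\mathfrak{l}_-=\mathfrak{Reg}_\ac\cdot\Char\left(H^1_{\FFgr}(K,\TT_{f,\alpha}^\ac)/\LL_\ac^\mathfrak{o}\cdot\textup{BF}_{D_\infty}\right)$ from Theorem~\ref{thm:rubinstyleformula} and Proposition~\ref{prop:thederivativemapstoderivative}, feed in the Euler-system divisibility, then use Proposition~\ref{prop:torsionsubmodulepofXminus} and Lemma~\ref{lem:PTdualityandtorsionsubmodules} to cancel $\mathfrak{l}_-$, upgrading to equality via Corollary~\ref{cor:mainconjequalityindefinitecase}. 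Two minor corrections: the Euler-system input you need is Theorem~\ref{thm:mainconjwithoutpadicLfunc} rather than Theorem~\ref{thm:mainconjtakeone}(i) (the latter is the (Sign~$+$) statement and does not apply here), and no further descent in the style of Corollary~\ref{cor:mainanticyclosharpdefinite} is required, since Corollary~\ref{cor:mainconjequalityindefinitecase} already lives on the anticyclotomic line and plugs directly into your chain of identities.
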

\begin{proof}
It follows from Theorem~\ref{thm:rubinstyleformula}, Proposition~\ref{prop:thederivativemapstoderivative} (and the fact that Tate pairing is an isomorphism) that 
\begin{align}
\notag \mathfrak{L}_{f,1}^{(\alpha)}\cdot \mathfrak{l}_-&=\mathfrak{Reg}_{\ac}(\textup{BF}_{D_\infty})\\
\label{eqn:firstreductionmain} &=\mathfrak{Reg}_{\ac}\cdot \Char\left(H^1_{\FFgr}(K,\TT_{f,\alpha}^\ac)\big{/}\LL_{\ac}^\mathfrak{o}\cdot\textup{BF}_{D_\infty}\right)
\end{align}
where $\mathfrak{Reg}_{\ac}(\textup{BF}_{D_\infty})$ is the characteristic ideal of the cokernel of the map
$$\mathfrak{h}_p^\ac: \left(\LL_{\ac}^\mathfrak{o} \cdot \textup{BF}_{D_\infty} \right)\otimes H^1_{\FFgr}(K,\TT_{f,\alpha^{-1}}^\ac)\lra \LL_{\ac}^\mathfrak{o}\,.$$
Combining (\ref{eqn:firstreductionmain}) and Theorem~\ref{thm:mainconjwithoutpadicLfunc}, we conclude that
\begin{equation}
\label{eqn:secondreductionmain}
\mathfrak{L}_{f,1}^{(\alpha)}\cdot \mathfrak{l}_-\subseteq\mathfrak{Reg}_{\ac}\cdot\Char\left(\mathfrak{X}_+(f\otimes\alpha/D_\infty)\right)
\end{equation}
as ideals of $\RR_L^\ac$. Theorem~\ref{prop:torsionsubmodulepofXminus} together with (\ref{eqn:secondreductionmain}) shows that
$$\mathfrak{L}_{f,1}^{(\alpha)}\cdot \mathfrak{l}_-\subseteq\mathfrak{Reg}_{\ac}\cdot\Char\left(\mathfrak{X}_-(f\otimes\alpha/D_\infty)_{\textup{tor}}\right)
$$
and this combined with Lemma~\ref{lem:PTdualityandtorsionsubmodules} concludes the proof that 
$$\mathfrak{L}_{f,1}^{(\alpha)}\cdot \mathfrak{l}_-\subseteq\mathfrak{Reg}_{\ac}\cdot\Char\left(\mathfrak{X}(f\otimes\alpha/D_\infty)_{\textup{tor}}\right)\cdot \mathfrak{l}_-\,.$$
By rank considerations, the ideal $\mathfrak{l}_-$ is non-zero and the first part of the theorem is proved. Under the additional hypotheses, Corollary~\ref{cor:mainconjequalityindefinitecase} applies and allows us to upgrade all the containments above to equalities and therefore completes the proof of the theorem.
\end{proof}
\bibliographystyle{amsalpha}
\bibliography{references}

\providecommand{\bysame}{\leavevmode\hbox to3em{\hrulefill}\thinspace}
\providecommand{\MR}{\relax\ifhmode\unskip\space\fi MR }
\providecommand{\MRhref}[2]{%
  \href{http://www.ams.org/mathscinet-getitem?mr=#1}{#2}
}
\providecommand{\href}[2]{#2}
\begin{thebibliography}{KLZ15b}

\bibitem[AH05]{agboolahowardsupersingular}
Adebisi Agboola and Benjamin Howard, \emph{Anticyclotomic {I}wasawa theory of
  {CM} elliptic curves. {II}}, Math. Res. Lett. \textbf{12} (2005), no.~5-6,
  611--621.

\bibitem[AH06]{agboolahowardordinary}
\bysame, \emph{Anticyclotomic {I}wasawa theory of {CM} elliptic curves}, Ann.
  Inst. Fourier (Grenoble) \textbf{56} (2006), no.~4, 1001--1048.

\bibitem[Arn07]{arnoldhigherweightanticyclo}
Trevor Arnold, \emph{Anticyclotomic main conjectures for {CM} modular forms},
  J. Reine Angew. Math. \textbf{606} (2007), 41--78.

\bibitem[BD05]{BD05}
M.~Bertolini and H.~Darmon, \emph{Iwasawa's main conjecture for elliptic curves
  over anticyclotomic {$\Bbb Z_p$}-extensions}, Ann. of Math. (2) \textbf{162}
  (2005), no.~1, 1--64.

\bibitem[BDP13]{bertolinidarmonprasanna13}
Massimo {Bertolini}, Henri {Darmon}, and Kartik {Prasanna}, \emph{{Generalized
  Heegner cycles and $p$-adic Rankin $L$-series.}}, {Duke Math. J.}
  \textbf{162} (2013), no.~6, 1033--1148.

\bibitem[BL15]{kbbleiPLMS}
K{\^a}z{\i}m B\"uy\"ukboduk and Antonio Lei, \emph{Coleman-adapted
  {R}ubin--{S}tark {K}olyvagin systems and supersingular {I}wasawa theory of
  {CM} abelian varieties}, Proc. Lond. Math. Soc. (3) \textbf{111} (2015),
  no.~6, 1338--1378.

\bibitem[BL16]{kbbleipnonord}
\bysame, \emph{{A}nticyclotomic {I}wasawa theory of {E}lliptic {M}odular
  {F}orms for non-ordinary primes}, 2016, submitted, arXiv:1605.05310.

\bibitem[B{\"u}y14a]{kbbCMYager}
K{\^a}z{\i}m B{\"u}y{\"u}kboduk, \emph{Main conjectures for {CM} fields and a
  {Y}ager-type theorem for {R}ubin-{S}tark elements}, Int. Math. Res. Not. IMRN
  (2014), no.~21, 5832--5873.

\bibitem[B{\"u}y14b]{kbbCMabvar}
\bysame, \emph{Main conjectures for {CM} fields and a {Y}ager-type theorem for
  {R}ubin-{S}tark elements}, Int. Math. Res. Not. IMRN (2014), no.~21,
  5832--5873.

\bibitem[B{\"u}y15]{kbbanticyclossCM}
\bysame, \emph{On the anticyclotomic {I}wasawa theory of {CM} forms at
  supersingular primes}, Rev. Mat. Iberoam. \textbf{31} (2015), no.~1,
  109--126.

\bibitem[B{\"u}y16]{kbbdeform}
\bysame, \emph{Deformations of {K}olyvagin systems}, $\geq 2016$,
  \emph{{A}nnales des {S}ciences {M}ath�matiques du {Q}u�bec, special issue
  marking the 60th birthday of {G}lenn {S}tevens}, to appear.

\bibitem[Cas15]{castellaheegner}
Francesc Castella, \emph{$p$-adic heights of {H}eegner points and
  {B}eilinson-{F}lach elements}, 2015, Preprint, arXiv:1509.02761.

\bibitem[CH15]{chidahsiehanticyclomainconjformodformscomposito}
Masataka Chida and Ming-Lun Hsieh, \emph{On the anticyclotomic {I}wasawa main
  conjecture for modular forms}, Compos. Math. \textbf{151} (2015), no.~5,
  863--897.

\bibitem[CH16]{chidahsiehcrelle}
\bysame, \emph{{S}pecial values of anticyclotomic {$L$}-functions for modular
  forms}, $\geq2016$, \emph{J. reine angew. Math.}, to appear.
  {DOI}:10.1515/crelle-2015-0072.

\bibitem[CKL15]{castellakimlongo}
Francesc Castella, Chan-Ho Kim, and Matteo Longo, \emph{{V}ariation of
  anticyclotomic {I}wasawa invariants in {H}ida families}, 2015, Preprint,
  arXiv:1504.06310v2.

\bibitem[CV07]{cornutvatsal2007}
Christophe {Cornut} and Vinayak {Vatsal}, \emph{{Nontriviality of
  Rankin-Selberg $L$-functions and CM points.}}, {$L$-functions and Galois
  representations. Based on the symposium, Durham, UK, July 19--30, 2004},
  Cambridge: Cambridge University Press, 2007, pp.~121--186.

\bibitem[CW77]{coateswiles77}
John Coates and Andrew Wiles, \emph{On the conjecture of {B}irch and
  {S}winnerton-{D}yer}, Invent. Math. \textbf{39} (1977), no.~3, 223--251.

\bibitem[DI08]{darmoniovita}
Henri Darmon and Adrian Iovita, \emph{The anticyclotomic main conjecture for
  elliptic curves at supersingular primes}, J. Inst. Math. Jussieu \textbf{7}
  (2008), no.~2, 291--325.

\bibitem[EPW06]{EPW}
Matthew Emerton, Robert Pollack, and Tom Weston, \emph{Variation of {I}wasawa
  invariants in {H}ida families}, Invent. Math. \textbf{163} (2006), no.~3,
  523--580.

\bibitem[Hid85]{hida85}
Haruzo Hida, \emph{A {$p$}-adic measure attached to the zeta functions
  associated with two elliptic modular forms. {I}}, Invent. Math. \textbf{79}
  (1985), no.~1, 159--195.

\bibitem[Hid88]{hida88}
\bysame, \emph{A {$p$}-adic measure attached to the zeta functions associated
  with two elliptic modular forms. {II}}, Ann. Inst. Fourier (Grenoble)
  \textbf{38} (1988), no.~3, 1--83.

\bibitem[Hid93]{hidabook}
\bysame, \emph{Elementary theory of {$L$}-functions and {E}isenstein series},
  London Mathematical Society Student Texts, vol.~26, Cambridge University
  Press, Cambridge, 1993.

\bibitem[How04]{howardcompositio1}
Benjamin Howard, \emph{The {H}eegner point {K}olyvagin system}, Compos. Math.
  \textbf{140} (2004), no.~6, 1439--1472.

\bibitem[How05]{how2LamdaadicGZ}
\bysame, \emph{The {I}wasawa theoretic {G}ross-{Z}agier theorem}, Compos. Math.
  \textbf{141} (2005), no.~4, 811--846.

\bibitem[{Hsi}14]{hsiehnonvanishing}
Ming-Lun {Hsieh}, \emph{{Special values of anticyclotomic Rankin-Selberg
  $L$-functions.}}, {Doc. Math., J. DMV} \textbf{19} (2014), 709--767.

\bibitem[HT93]{HT}
Haruzo Hida and Jacques Tilouine, \emph{Anti-cyclotomic {K}atz {$p$}-adic
  {$L$}-functions and congruence modules}, Ann. Sci. \'Ecole Norm. Sup. (4)
  \textbf{26} (1993), no.~2, 189--259.

\bibitem[Kat99]{katoreciprocity99}
Kazuya Kato, \emph{Generalized explicit reciprocity laws}, Adv. Stud. Contemp.
  Math. (Pusan) \textbf{1} (1999), 57--126, Algebraic number theory
  (Hapcheon/Saga, 1996).

\bibitem[KLZ14]{KLZ0}
Guido Kings, David Loeffler, and Sarah Zerbes, \emph{Rankin-{S}elberg {E}uler
  systems and $p$-adic interpolation}, 2014, Preprint, arXiv:1405.3079.

\bibitem[KLZ15a]{KLZ2}
\bysame, \emph{{R}ankin--{E}isenstein classes and {E}xplicit {R}eciprocity
  {L}aws}, 2015, Preprint, arXiv:1503.02888.

\bibitem[KLZ15b]{KLZ1}
\bysame, \emph{Rankin-{E}isenstein classes for {M}odular {F}orms}, 2015,
  Preprint, arXiv:1501.03289.

\bibitem[LLZ14]{LLZ1}
Antonio Lei, David Loeffler, and Sarah~Livia Zerbes, \emph{Euler systems for
  {R}ankin-{S}elberg convolutions of modular forms}, Ann. of Math. (2)
  \textbf{180} (2014), no.~2, 653--771.

\bibitem[LLZ15]{LLZ2}
\bysame, \emph{Euler systems for modular forms over imaginary quadratic
  fields}, Compos. Math. \textbf{151} (2015), no.~9, 1585--1625.

\bibitem[LVZ15]{LVZ}
David Loeffler, Otmar Venjakob, and Sarah~Livia Zerbes, \emph{Local epsilon
  isomorphisms}, Kyoto J. Math. \textbf{55} (2015), no.~1, 63--127.

\bibitem[MR04]{mr02}
Barry Mazur and Karl Rubin, \emph{Kolyvagin systems}, Mem. Amer. Math. Soc.
  \textbf{168} (2004), no.~799, viii+96.

\bibitem[Nek93]{nekovarheightpairings}
Jan Nekov{\'a}{\v{r}}, \emph{{On $p$-adic height pairings.}}, {S\'eminaire de
  th\'eorie des nombres, Paris, France, 1990-1991}, Basel: Birkh\"auser, 1993,
  pp.~127--202.

\bibitem[Nek06]{nekovar06}
\bysame, \emph{Selmer complexes}, Ast\'erisque (2006), no.~310, viii+559.

\bibitem[Och05]{ochiai05}
Tadashi Ochiai, \emph{Euler system for {G}alois deformations}, Ann. Inst.
  Fourier (Grenoble) \textbf{55} (2005), no.~1, 113--146.

\bibitem[Oht99]{ohta99}
Masami Ohta, \emph{{Ordinary $p$-adic \'etale cohomology groups attached to
  towers of elliptic modular curves.}}, {Compos. Math.} \textbf{115} (1999),
  no.~3, 241--301.

\bibitem[PR84]{pr84}
Bernadette Perrin-Riou, \emph{Arithm\'etique des courbes elliptiques et
  th\'eorie d'{I}wasawa}, M\'em. Soc. Math. France (N.S.) (1984), no.~17, 130.

\bibitem[PR88]{PR}
\bysame, \emph{Fonctions {$L$} {$p$}-adiques associ\'ees \`a une forme
  modulaire et \`a un corps quadratique imaginaire}, J. London Math. Soc. (2)
  \textbf{38} (1988), no.~1, 1--32.

\bibitem[PR04]{pollackrubin04}
Robert Pollack and Karl Rubin, \emph{The main conjecture for {CM} elliptic
  curves at supersingular primes}, Ann. of Math. (2) \textbf{159} (2004),
  no.~1, 447--464.

\bibitem[Roh88]{rohrlich88}
David~E. Rohrlich, \emph{{$L$}-functions and division towers}, Math. Ann.
  \textbf{281} (1988), no.~4, 611--632.

\bibitem[Rub91]{rubinmainconj}
Karl Rubin, \emph{The ``main conjectures'' of {I}wasawa theory for imaginary
  quadratic fields}, Invent. Math. \textbf{103} (1991), no.~1, 25--68.

\bibitem[SU14]{skinnerurbanmainconj}
Christopher Skinner and Eric Urban, \emph{The {I}wasawa main conjectures for
  {$\rm GL_2$}}, Invent. Math. \textbf{195} (2014), no.~1, 1--277.

\bibitem[Wan14]{xinwanwanrankinselberg}
Xin Wan, \emph{{I}wasawa main conjecture for {R}ankin-{S}elberg $p$-adic
  {$L$}-functions}, 2014, Preprint, arXiv:1408.4044.

\bibitem[Wan15]{xinwanwanhilbert}
\bysame, \emph{The {I}wasawa main conjecture for {H}ilbert modular forms},
  Forum of Mathematics, Sigma \textbf{3} (2015).

\bibitem[Wil78]{wiles78}
Andrew Wiles, \emph{Higher explicit reciprocity laws}, Ann. Math. (2)
  \textbf{107} (1978), no.~2, 235--254.

\end{thebibliography}
\end{document}